\documentclass[10pt]{amsart}
\usepackage[margin=1in]{geometry}
\usepackage{amsmath,amsthm,amssymb,amsfonts}
\usepackage{subcaption} 
\usepackage[countmax]{subfloat}
\usepackage[pdfencoding=auto]{hyperref} 
\usepackage{xcolor}
\usepackage{float}

\usepackage{comment}

\usepackage{graphicx}
\graphicspath{{images/}}

\usepackage{setspace} 

\usepackage{enumitem}

\newcommand{\Z}{\mathbb{Z}}
\newcommand{\R}{\mathbb{R}}
\newcommand{\CP}{\mathbb{CP}^2}
\newcommand{\CPbar}{\overline{\mathbb{CP}}^2}
\newcommand{\nCP}{\# n \mathbb{CP}^2}
\newcommand{\nCPbar}{\# n \overline{\mathbb{CP}}^2}
\newcommand{\rCP}{\# r \mathbb{CP}^2}
\newcommand{\rCPbar}{\# r \overline{\mathbb{CP}}^2}
\newcommand{\CPone}{\mathbb{CP}^1}
\newcommand{\CPonebar}{\overline{\mathbb{CP}}^1}
\def\inter{\mathop{\rm int}}
\def\rnk{\mathop{\rm rank}}

\newcommand{\PF}{\mathbb{PF}}
\newcommand{\zsg}{\phi : S^3_0(K) \rightarrow S^3_0(K')}
\newcommand{\MPfam}{L(a,b,c,d,e,f)}

\usepackage[backend=biber,style=alphabetic,citestyle=alphabetic,maxnames=10,maxalphanames=99]{biblatex}

\renewbibmacro{in:}{%
  \ifentrytype{article}
    {}
    {\bibstring{in}%
     \printunit{\intitlepunct}}}

\usepackage{hyperref}

\addbibresource{trace_emb_zero_surg_citations.bib}


\newtheorem{thm}{Theorem}[section]
\newtheorem{lem}[thm]{Lemma}
\newtheorem{cor}[thm]{Corollary}
\newtheorem{mydef}[thm]{Definition}
\newtheorem{prop}[thm]{Proposition}
\newtheorem{remark}[thm]{Remark}
\newtheorem{conj}[thm]{Conjecture}

\newtheorem{prob}[thm]{Problem}

\newtheorem*{refthm}{Theorem}
\newtheorem*{SPC4}{Smooth 4-Dimensional Poincar\'e Conjecture (SPC4)}

\title {Trace Embeddings from Zero Surgery Homeomorphisms}
\author{Kai Nakamura}
\address{  University of Texas at Austin\\
      Department of Mathematics\\
      2515 Speedway. Austin, TX 78712
   USA}
\email{kainakamura@utexas.edu}
\date{\today}

\begin{document}

\begin{abstract}
Manolescu and Piccirillo recently initiated a program to construct an exotic $S^4$ or $\nCP$ by using zero surgery homeomorphisms and Rasmussen's $s$-invariant \cite{zero_surg_exotic}.
They find five knots that if any were slice, one could construct an exotic $S^4$ and disprove the Smooth $4$-dimensional Poincar\'e conjecture.
We rule out this exciting possibility and show that these knots are not slice.
To do this, we use a zero surgery homeomorphism to relate slice properties of two knots \textit{stably} after a connected sum with some $4$-manifold.
Furthermore, we show that our techniques will extend to the entire infinite family of zero surgery homeomorphisms constructed by Manolescu and Piccirillo.
However, our methods do not completely rule out the possibility of constructing an exotic $S^4$ or $\nCP$ as Manolescu and Piccirillo proposed.
We explain the limits of these methods hoping this will inform and invite new attempts to construct an exotic $S^4$ or $\nCP$.
We also show a family of homotopy spheres constructed by Manolescu and Piccirillo using annulus twists of a ribbon knot are all standard.
\end{abstract}

\maketitle

\section{Introduction}
\subsection{Background}
The study of $4$-manifolds is distinguished by the remarkable difference between smooth and topological $4$-manifolds compared to other dimensions.
This manifests in the failure of Smale's h-cobordism theorem which Smale used to prove the high dimensional Poincar\'e conjecture \cite{hcobord}.
This left open the $4$-dimensional Poincar\'e conjecture which asserts that every smooth $4$-manifold homotopy equivalent to $S^4$, i.e. a homotopy $S^4$, is homeomorphic to $S^4$.
Freedman resolved the $4$-dimensional Poincar\'e conjecture and showed that simply connected, smooth $4$-manifolds are determined up to homeomorphism by their intersection forms \cite{Freedman}.
Donaldson contrasted this topological simplicity with his Diagonalization Theorem and showed that $4$-manifolds do not smoothly admit such a straightforward classification \cite{Donaldson_Diag}.
The resulting study of $4$-manifolds have yielded an abundance of exotic pairs of $4$-manifolds: pairs of $4$-manifolds homeomorphic, but not diffeomorphic to each other.
Unique to dimension $4$ are phenomena such as infinite families of exotic smooth structures on $\R^4$ and small closed $4$-manifolds such as $\CP \# 2 \CPbar$ \cite{uncount_ex_R4,exotic_small_closed}.
Remarkably, this exotic behavior has not been shown to occur with the $4$-sphere: the most basic example of a closed $4$-manifold.
This is the focus of the Smooth $4$-dimensional Poincar\'e conjecture (SPC4).
\begin{SPC4}
\label{SPC4}
Every homotopy $4$-sphere is diffeomorphic to the standard $4$-sphere
\end{SPC4}
\noindent The various flavors of the Poincar\'e conjecture motivated and revolutionized 20th century topology with SPC4 the last low dimensional case that remains unresolved.

Historically, the consensus among experts is that SPC4 is false due to the aforementioned exotica and the many constructions of homotopy spheres that are not known to be standard.
The difficulty with exhibiting an exotic $S^4$ is that the invariants used to distinguish smooth structures are typically known to vanish on homotopy $4$-spheres.
This changed with Rasmussen's invention of his eponymous $s$-invariant \cite{Khov_slice}, a slice obstruction coming from Khovanov Homology \cite{khov}.
A knot is smoothly slice if it is the boundary of a smooth properly embedded disk in $B^4$.
Rasmussen defined the $s$-invariant $s(K) \in 2\Z$ for any knot $K$ and showed that if $s(K) \neq 0$, then $K$ is not slice.
Unlike prior slice obstructions, it is not clear that the $s$-invariant vanishes on knots slice in a homotopy ball other than $B^4$.
In theory, one could show that a homotopy sphere $\Sigma$ is exotic by finding a knot $K$ slice in the homotopy ball $\Sigma - \inter(B^4)$ and has $s(K) \neq 0$.
Then $K$ would not be slice in the standard $B^4$ and sliceness of $K$ smoothly distinguishes $\Sigma$ from the standard $S^4$.

Freedman, Gompf, Morrison, and Walker (FGMW) attempted this strategy on the intensely studied family of Cappell-Shaneson spheres $\Sigma_n$.
They found knots slice in $\Sigma_n -\inter(B^4)$, hoping that one of these knots would have non-vanishing $s$-invariant.
They were only able to do the calculations for two of their knots and got zero for both \cite{Man_and_Machine}.
Surprisingly, only six days after FGMW posted their results, Akbulut showed that all $\Sigma_n$ are standard \cite{CS_standard}.
Indirectly, this shows that all of the knots FGMW considered had vanishing $s$-invariant.

Piccirillo's acclaimed proof that the Conway knot is not slice has renewed interest in the $s$-invariant.
Piccirillo's proof takes advantage of and makes apparent the uniqueness of Rasmussen's $s$-invariant among other slice obstructions \cite{Conway_knot}.
It now seems more likely that the $s$-invariant could be used to distinguish a homotopy sphere from $S^4$.
However, the Cappell-Shaneson spheres $\Sigma_n$ were the most promising potentially exotic homotopy spheres.
With $\Sigma_n$ now standardized, we are left with a dearth of homotopy spheres.

Recently, Piccirillo worked with Manolescu to revive this idea of FGMW to use the $s$-invariant to find an exotic $S^4$.
Unlike FGMW, they don't use knots slice in a known homotopy sphere and in some sense, they reverse the FGMW approach.
They propose a way to build a new homotopy sphere $\Sigma$ which comes with such a knot already.
To construct an exotic $S^4$, they propose to take a pair of knots $(K,K')$ which satisfy three conditions:
\[K \text{ is slice},   \ \ \ s(K') \neq 0, \ \ \ \zsg\]
This would allow one to construct a homotopy sphere $\Sigma$ where $K'$ is slice in $\Sigma - \inter(B^4)$.
Then $\Sigma$ would have the properties that FGMW wanted and would be an exotic $S^4$.
Manolescu and Piccirillo initiated a search for such a pair of knots, but did not find any that satisfied all three conditions.
They did find pairs $(K,K')$ which have homeomorphic zero surgeries, $s(K') < 0$, and could not immediately determine the sliceness of $K$.
They conclude the following:
\begin{figure}
\captionsetup[subfigure]{font=normalsize, labelformat=empty}

\centering
\subfloat[$K_1$]{\includegraphics[scale=.15]{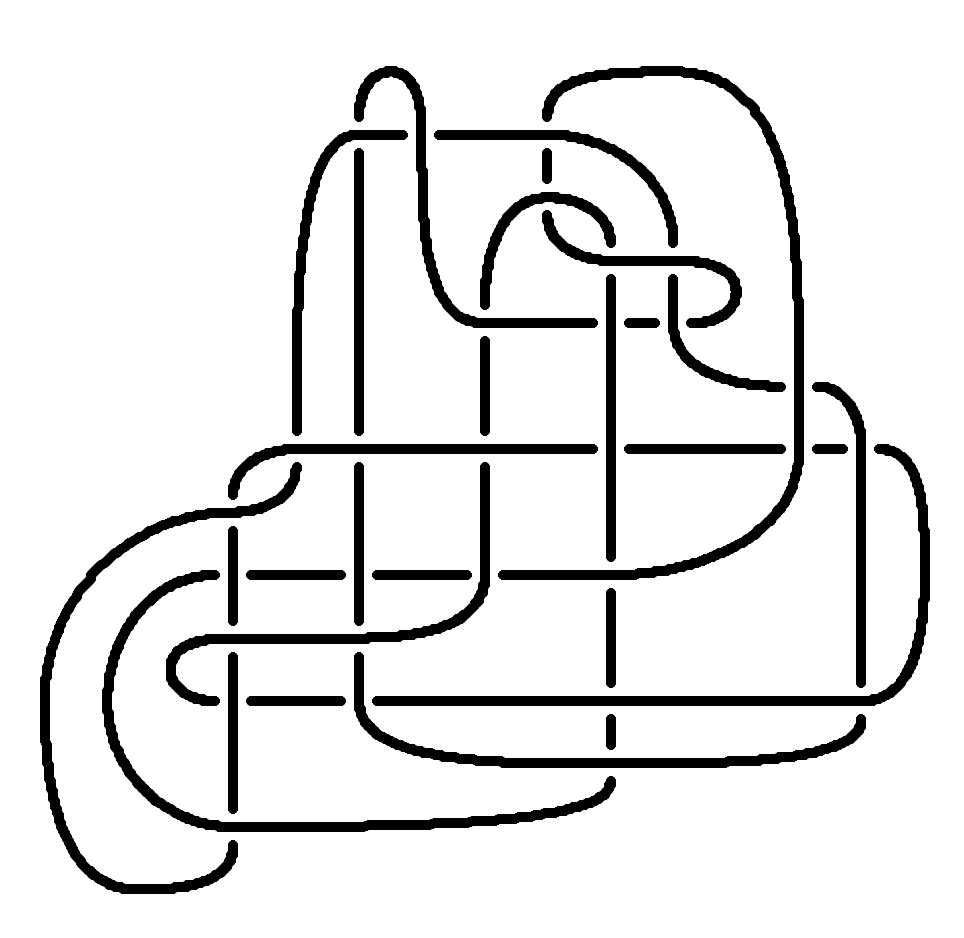}} \ \ \
\subfloat[$K_2$]{\includegraphics[scale=.08]{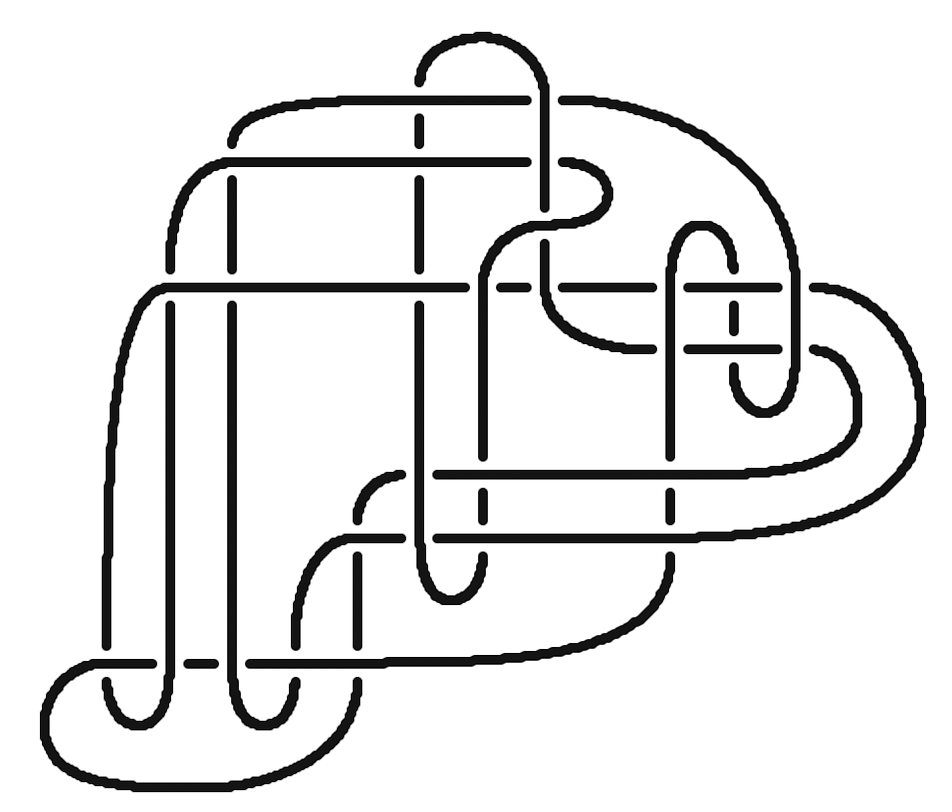}} \ \ \
\subfloat[$K_3$]{\includegraphics[scale=.08]{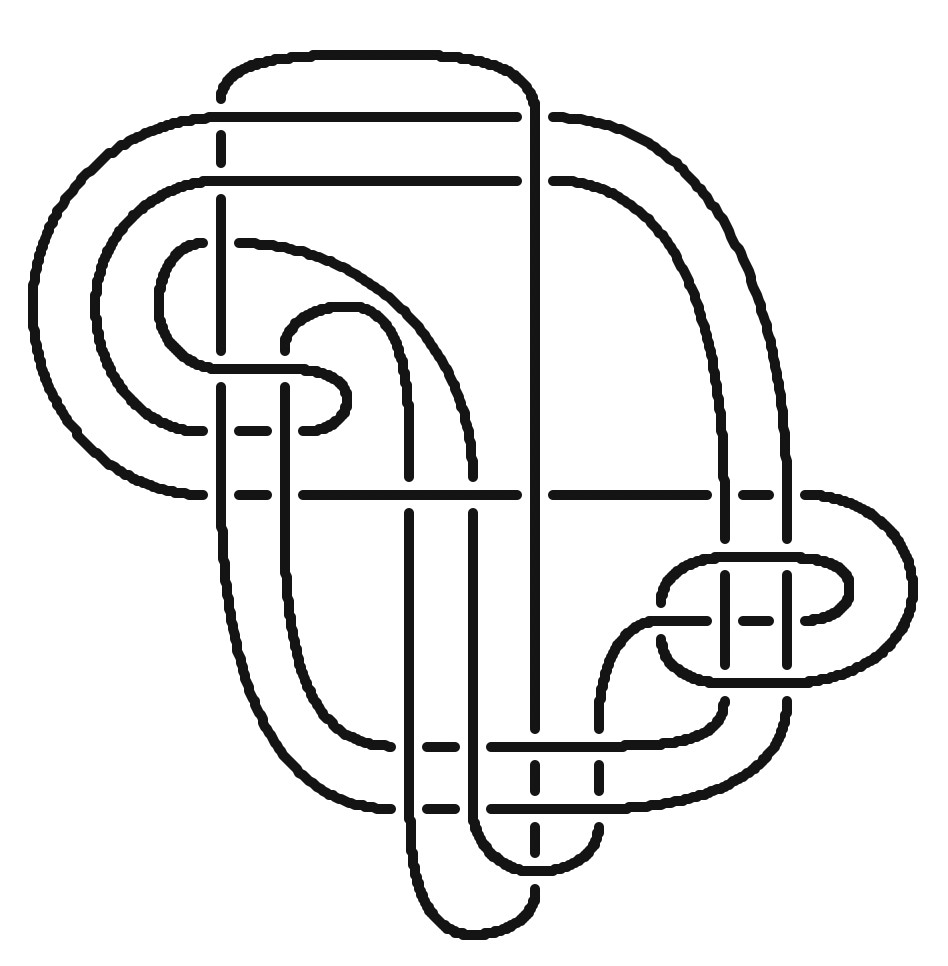}} \\
\subfloat[$K_4$]{\includegraphics[scale=.08]{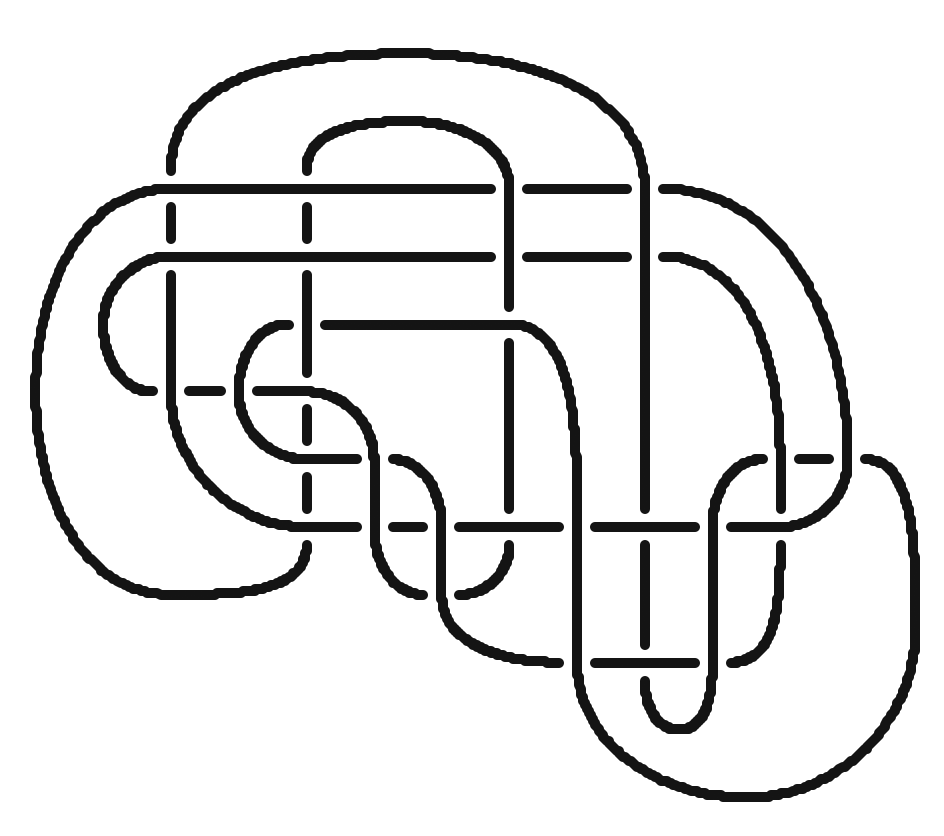}} \ \ \
\subfloat[$K_5$]{\includegraphics[scale=.08]{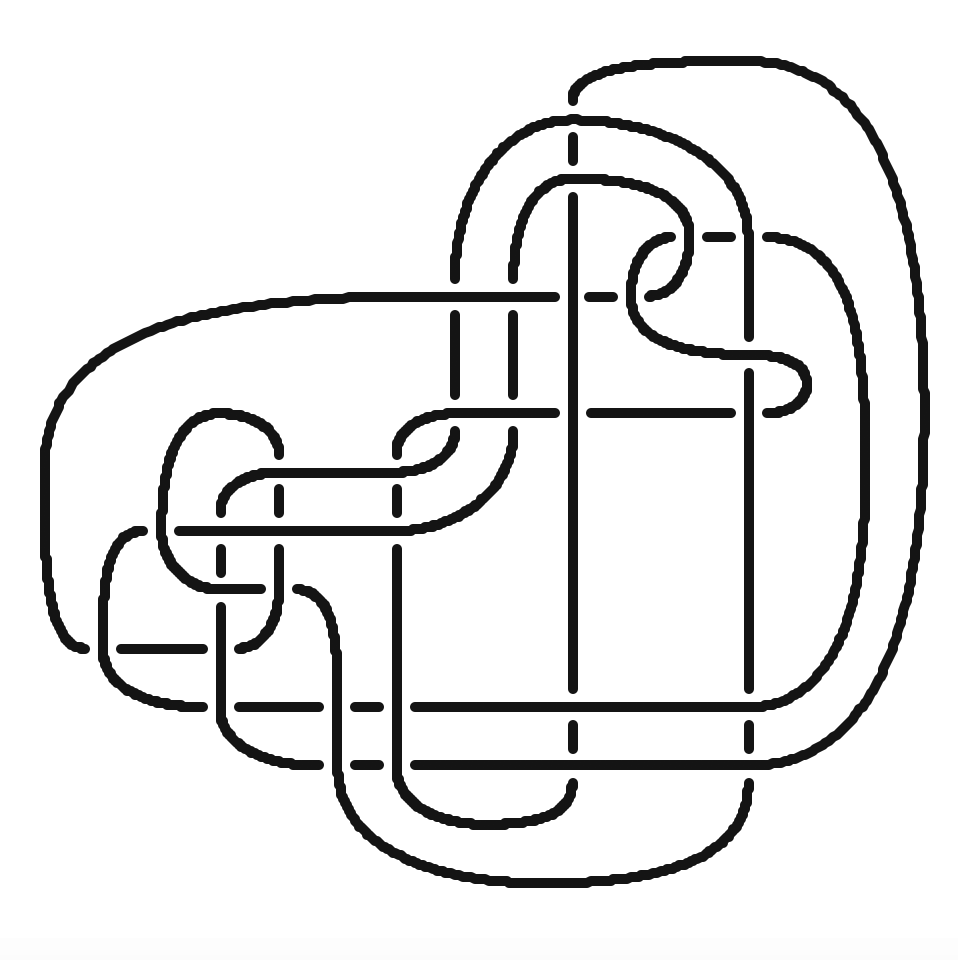}}
\caption{If any are slice, then an exotic $S^4$ exists. Figure $1$ of \cite{zero_surg_exotic}. }
\label{fig:5knots}
\end{figure}
\begin{refthm}[1.3 of \cite{zero_surg_exotic}]
If any of the knots $K_1, \dots, K_5$ of Figure \ref{fig:5knots} are slice, then an exotic $S^4$ exists.
\end{refthm}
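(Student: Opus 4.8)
The plan is to reduce the theorem to a single cut-and-paste construction. Manolescu and Piccirillo's search supplies, for each $i$, a companion knot $K_i'$ together with a homeomorphism $S^3_0(K_i)\to S^3_0(K_i')$ and the computation $s(K_i')<0$, so $s(K_i')\neq 0$. Hence it suffices to prove: if $(K,K')$ is a pair of knots with $K$ smoothly slice, $s(K')\neq 0$, and $\zsg$ a homeomorphism, then an exotic $4$-sphere exists. Because closed $3$-manifolds carry unique smooth structures, the homeomorphic zero-surgeries $S^3_0(K)$ and $S^3_0(K')$ are in fact diffeomorphic; I would glue along such a diffeomorphism, which is what makes the output a smooth --- hence genuinely exotic --- $S^4$ rather than merely a topological one.

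Next I would build the candidate. Write $X_0(K)$ for the $0$-trace of $K$, that is, $B^4$ with a $2$-handle attached along $K$ with framing $0$; being built from a $0$-handle and a $2$-handle it has $\chi(X_0(K))=2$ and $\pi_1(X_0(K))=1$. A smooth slice disk $D\subset B^4$ for $K$ realizes $X_0(K)$ as a smooth submanifold of $S^4$, namely $\nu(D)\cup(S^4\setminus\inter(B^4))$, whose complement $X:=B^4\setminus\nu(D)$ is a slice-disk complement: thus $\partial X=S^3_0(K)$, $X$ has the integral homology of $S^1$ (so $\chi(X)=0$), and $\pi_1(X)$ is normally generated by the meridian $\mu_K\subset\partial X$. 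The idea is now to excise $X_0(K)$ from $S^4$ and sew $X_0(K')$ back in along $\phi$: set $\Sigma:=X\cup_\phi X_0(K')$, a closed oriented $4$-manifold. Then $\chi(\Sigma)=\chi(X)+\chi(X_0(K'))=2$; van Kampen's theorem gives $\pi_1(\Sigma)=\pi_1(X)/\langle\langle\,\mathrm{im}(\pi_1(S^3_0(K))\to\pi_1(X))\,\rangle\rangle=1$, because $\pi_1(X_0(K'))=1$ and the normal generator $\mu_K$ of $\pi_1(X)$ lies in that image; and these two facts force $b_2(\Sigma)=0$. So $\Sigma$ is a homotopy $4$-sphere, hence homeomorphic to $S^4$ by Freedman's theorem, and smooth because we glued along a diffeomorphism.

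The payoff is that $X_0(K')$ embeds smoothly in $\Sigma$ by construction. By the trace embedding lemma --- for a homotopy $4$-sphere $Y$, the trace $X_0(K')$ smoothly embeds in $Y$ if and only if $K'$ bounds a smooth disk in the homotopy $4$-ball $Y\setminus\inter(B^4)$ --- the knot $K'$ is smoothly slice in a homotopy $4$-ball that caps off to $\Sigma$. If $\Sigma$ were diffeomorphic to the standard $S^4$, then $X_0(K')$ would embed in the standard $S^4$, hence $K'$ would be smoothly slice in the standard $B^4$, forcing $s(K')=0$ and contradicting $s(K')\neq 0$. Therefore $\Sigma$ is an exotic $S^4$; applying this to any slice $K_i$ proves the theorem.

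I expect the crux to be the last step: the direction of the trace embedding lemma that turns an embedding $X_0(K')\hookrightarrow S^4$ into an honest slice disk in the \emph{standard} $B^4$. This is the one irreducibly $4$-dimensional input, and it is exactly why the $s$-invariant, rather than a classical concordance obstruction, is indispensable here: a priori $s$ detects only failure of sliceness in the standard $B^4$, whereas the construction a priori exhibits $K'$ only as slice in a homotopy $B^4$; everything hinges on the fact --- already used in Piccirillo's resolution of the Conway knot --- that a trace embedding into the standard $S^4$ can do no better than the standard $B^4$ permits. A secondary technical point is the smoothing of $\Sigma$ noted above; without it one would only get a topological $S^4$, which would be no surprise.
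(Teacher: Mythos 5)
Your proposal is correct and follows essentially the same route the paper (and Manolescu--Piccirillo) describe: excise the slice-disk exterior $E(D)$, glue in the zero-trace of $K'$ along the zero surgery homeomorphism to get a homotopy $4$-sphere $\Sigma$ in which $K'$ is slice, and use the trace embedding lemma plus $s(K')\neq 0$ to conclude $K'$ is not slice in the standard $B^4$, so $\Sigma$ is exotic. The only cosmetic difference is that you verify $\Sigma\cong_{\mathrm{homeo}} S^4$ directly via van Kampen, $\chi=2$, and Freedman rather than citing Lemma~3.3 of \cite{zero_surg_exotic}, and you should glue $E(D)$ to $-X_0(K')$ (orientation-reversed trace) as in the paper, though for $S^4$ this orientation issue is immaterial.
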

SPC4 is a difficult, long open problem and disproving it by constructing an exotic $S^4$ is an ambitious task.
Another difficult, long open problem that might be more approachable is to construct an exotic positive definite $4$-manifold.
Historically, we have had more and earlier success constructing exotic $4$-manifolds closer to positive definite with larger topology.
In particular, it is easier to construct exotic $4$-manifolds with larger $b_2 (X) = \rnk (H_2(X))$.
We can adapt the above strategy to $\nCP$ using an adjunction inequality for the $s$-invariant in $\# n \CP$ \cite{gener_s_inv}.
We now want $K$ to be $H$-slice in $\nCP$, that is $K$ should bound a null-homologous disk $D$ in $\nCP - \inter(B^4)$.
To obstruct $H$-sliceness of $K'$ in $\nCP$, we need $s(K')<0$.
Manolescu and Piccirillo again found pairs of knots in their search that satisfied all but one of the necessary conditions.
\begin{refthm}[1.4 of \cite{zero_surg_exotic}]
If any of the knots $K_1,\dots,K_{23}$ are $H$-slice in some $\# n \CP$, then an exotic $\# n \CP$ exists\footnote{See Figure $23$ of \cite{zero_surg_exotic} for $K_6, \dots, K_{23}$}.
\end{refthm}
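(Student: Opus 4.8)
The plan is to run the ``trace-swap'' construction of Manolescu and Piccirillo in the positive-definite setting and feed it into the adjunction inequality for the $s$-invariant. Fix an index $i$, write $K=K_i$ and $K'=K_i'$, and let $\phi\colon S^3_0(K)\to S^3_0(K')$ be the given (orientation-preserving) homeomorphism. Assume $K$ is $H$-slice in $Z:=\nCP$ for some $n$, via a null-homologous properly embedded disk $D$ in $Z-\inter(B^4)$, and set $W:=\bigl(Z-\inter(B^4)\bigr)-\nu(D)$. Two preliminary observations drive the construction: first, because $[D]=0$ in $H_2\bigl(Z-\inter(B^4),S^3\bigr)$, the disk $D$ has the $0$-framing, so $\partial W=S^3_0(K)$; second, gluing $\nu(D)\cong D^2\times D^2$ back onto the removed $B^4$ along $\nu(K)\subset S^3$ recovers the $0$-trace $X_0(K)$, whence $Z=X_0(K)\cup_{S^3_0(K)}W$. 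We then swap caps and set
\[
Z':=X_0(K')\cup_{\phi}W,
\]
a closed oriented smooth $4$-manifold.

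The crux is to prove $Z'$ is homeomorphic to $\nCP$. For $\pi_1$, apply van Kampen to $Z'=X_0(K')\cup_{S^3_0(K')}W$: the $0$-trace $X_0(K')$ is simply connected, and $\pi_1(W)$ is normally generated by a meridian of $D$ (since $Z$ is simply connected and $D$ is a disk), which may be isotoped onto $\partial W$; amalgamating over $\pi_1(S^3_0(K'))$ with the trivial group therefore kills the image of $\pi_1(\partial W)$ in $\pi_1(W)$, hence all of $\pi_1(W)$, so $\pi_1(Z')=1$. For the intersection form, run Mayer--Vietoris for both decompositions $Z=X_0(K)\cup W$ and $Z'=X_0(K')\cup_\phi W$: since $X_0(K)$ and $X_0(K')$ have the same homology and $\phi$ is a homeomorphism, these sequences are formally identical, and using that the capped-off Seifert surface represents a generator of $H_2$ of each $0$-trace one deduces that inclusion induces isomorphisms $H_2(W;\Z)\xrightarrow{\sim}H_2(Z;\Z)$ and $H_2(W;\Z)\xrightarrow{\sim}H_2(Z';\Z)$. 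As all relevant intersections occur in the interior of $W$, these isomorphisms respect the intersection pairings, so $Q_{Z'}\cong Q_W\cong Q_Z\cong\langle 1\rangle^{\oplus n}$; equivalently, Novikov additivity of the signature across $S^3_0(K')$ together with $\operatorname{sign}X_0(K')=0$ gives $\operatorname{sign}Z'=\operatorname{sign}Z=n$. Since $Z'$ is smooth its Kirby--Siebenmann invariant vanishes, so Freedman's classification of closed simply connected topological $4$-manifolds yields $Z'\cong\nCP$.

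Next, exhibit an $H$-slice disk for $K'$ in $Z'$. Taking $B^4$ to be the $0$-handle of $X_0(K')\subset Z'$, the core of its $2$-handle is a properly embedded disk $D'\subset Z'-\inter(B^4)$ with $\partial D'=K'$ whose self-intersection equals its framing, namely $0$. Since $Q_{Z'}$ is positive definite and the only class of square zero is $0$, we get $[D']=0$ in $H_2(Z')$, hence in $H_2\bigl(Z'-\inter(B^4),S^3\bigr)$; so $D'$ is null-homologous and $K'$ is $H$-slice in $Z'\cong\nCP$. Finally, invoke the adjunction inequality for the $s$-invariant in $\nCP$ of \cite{gener_s_inv}: any knot $H$-slice in the standard $\nCP$ has $s\ge 0$. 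Since $s(K')<0$, the knot $K'$ is not $H$-slice in the standard $\nCP$, whereas it is $H$-slice in $Z'$; thus $Z'$ is homeomorphic but not diffeomorphic to $\nCP$, i.e.\ an exotic $\nCP$.

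The step I expect to be the main obstacle is the homeomorphism $Z'\cong\nCP$ --- concretely, the Mayer--Vietoris and van Kampen bookkeeping confirming that regluing the disk complement $W$ to the other $0$-trace along $\phi$ alters neither $\pi_1$ nor the intersection form. This is precisely where the $H$-slice hypothesis (rather than plain sliceness) is indispensable: it is what pins the normal framing of $D$ to $0$ and hence lets the homologically tame cap $X_0(K')$ be inserted in place of $X_0(K)$. Everything afterward --- producing $D'$ and contradicting $s(K')<0$ --- is formal once the quoted adjunction inequality is granted.
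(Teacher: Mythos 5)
Your proposal is correct and is essentially the argument the paper recalls from Manolescu--Piccirillo in Section 2: remove the exterior $E(D)$ of the $H$-slice disk, reglue the (orientation-reversed) trace $-X_0(K')$ along $\phi$, identify the result with $\nCP$ topologically (their Lemma 3.3, which you reprove via van Kampen, Mayer--Vietoris, and Freedman), observe $K'$ is $H$-slice there via the core of the $2$-handle, and contradict the bound $s\ge 0$ for knots $H$-slice in the standard $\nCP$ using $s(K_i')=-2$. The only cosmetic discrepancy is that in the paper's orientation conventions the cap is $-X_0(K')$ rather than $X_0(K')$, which does not affect any step of your argument.
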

\subsection{Results}
The knots $K_1, \dots, K_{5}$ are good candidates to be slice.
They have Alexander polynomial $1$ and are topologically slice by Freedman \cite{Freedman}, that is they bound topologically, locally flat disks in $B^4$.
Therefore, all obstructions to topological sliceness automatically vanish on these knots.
Many smooth concordance invariants such as Ozsv\'{a}th-Szab\'{o}'s $\tau$-invariant and Rasmussen's $s$-invariant also vanish on them.
In addition, the knots $K_1', \dots, K_5'$ would be slice in a homotopy $B^4$ and many of the necessary invariants vanish on these knots.
At first thought, one would need new slice obstructions that are stronger than those currently available.
Despite all of this we are able to show the following:
\begin{thm}
\label{thm:notslice}
The knots $K_1, \dots, K_{23}$ are not slice.
Furthermore, these knots are not $H$-slice in any $\# n \CP$.
\end{thm}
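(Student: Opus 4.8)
The plan is to run the Manolescu--Piccirillo construction in reverse and \emph{stably}. Fix one pair $(K,K')=(K_i,K_i')$ together with the zero surgery homeomorphism $\phi\colon S^3_0(K)\to S^3_0(K')$ from \cite{zero_surg_exotic}, and suppose toward a contradiction that $K$ is $H$-slice in some $\# m \CP$ (the case $m=0$ being ``$K$ slice''). The goal is to promote this into the statement that $K'$ is $H$-slice in some $\nCP$, which is forbidden by the adjunction inequality for the $s$-invariant in $\nCP$ of \cite{gener_s_inv} together with $s(K')<0$. Note that once $K'$ is excluded from being $H$-slice in every $\nCP$ we are done, since a knot slice in $B^4$ is automatically $H$-slice in each $\nCP$ (place a standard slice disk in a ball disjoint from the $\CP$ summands).

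\emph{Step 1: the trace-embedding dictionary.} Recall the trace embedding lemma and its relative form: $K$ is $H$-slice in a closed simply connected $4$-manifold $X$ if and only if the zero-trace $X_0(K)=B^4\cup_{K,0}(\text{$2$-handle})$ smoothly embeds in $X$ with the generator of $H_2(X_0(K);\Z)$ sent to $0$. Thus from the hypothesis we get a smooth embedding $X_0(K)\hookrightarrow Y:=\# m \CP$ whose complement $N:=\overline{Y\smallsetminus\inter X_0(K)}$ satisfies $\partial N=S^3_0(K)$, $X_0(K)\cup_{S^3_0(K)}N=Y$, and $b_1(N)=1$, $b_2(N)=m$ with positive definite form. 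Now glue $N$ to $X_0(K')$ along $\phi$ to form the closed $4$-manifold $W:=X_0(K')\cup_\phi N$. Since $Y$ is simply connected, $\pi_1(N)$ is normally generated by the image of $\pi_1$ of its boundary, so van Kampen gives $\pi_1(W)=1$; Novikov additivity with $\sigma(X_0(K'))=0$ gives $\sigma(W)=\sigma(N)=m$, and a Euler characteristic count gives $b_2(W)=m$. Hence $W$ is a closed simply connected positive definite $4$-manifold, so by Donaldson its form is $\langle 1\rangle^{\oplus m}$ and by Freedman $W$ is homeomorphic to $\# m \CP$ (a homotopy $S^4$ when $m=0$). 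Finally, the core of the $2$-handle of $X_0(K')$ is a smooth properly embedded disk bounded by $K'$ in $W\smallsetminus\inter B^4$; tracing its homology class through the Mayer--Vietoris sequence of $W=X_0(K')\cup N$ identifies it with the image of the generator of $H_2(X_0(K'))$, which one checks to be null-homologous (this is immediate when $m=0$), so $K'$ is $H$-slice in $W$.

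\emph{Step 2: making the ambient manifold an honest $\nCP$ --- the crux.} We now have $K'$ that is $H$-slice in a manifold $W$ only \emph{homeomorphic} to $\# m \CP$, and one cannot conclude $H$-sliceness in a genuine $\nCP$ for free. Indeed, such a $W$ is only guaranteed to become standard after stabilizing by copies of $S^2\times S^2$; but $S^2\times S^2$-stabilization of a non-spin manifold is equivalent to adding $\CP\#\CPbar$ summands, and the $s$-invariant adjunction is useless as soon as $\CPbar$ summands appear. The content of the paper is that for each of the $23$ explicit homeomorphisms $\phi_i$ one can beat this obstruction by hand: using the explicit surgery (RBG-link) descriptions underlying the $\phi_i$ in \cite{zero_surg_exotic}, one writes down a handle diagram for $W$ (or for $W$ connect-summed with some number of copies of $\CP$), simplifies it by Kirby calculus to the standard diagram of $\# n \CP$, and carries the disk for $K'$ along so that it survives as a properly embedded null-homologous disk. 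This yields an honest $H$-slice disk for $K'$ in $\# n \CP$ for an explicit $n=n_i$. This is the main obstacle: everything hinges on controlling the diffeomorphism type, and especially the positive definiteness, of the closed $4$-manifold produced from $\phi_i$; this is exactly why the method does not settle SPC4 in general and must be checked case by case (and, as the introduction indicates, extended uniformly over Manolescu and Piccirillo's infinite family).

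\emph{Step 3: conclusion.} Apply the adjunction inequality of \cite{gener_s_inv}: if a knot is $H$-slice in $\nCP$ then its $s$-invariant is nonnegative. Since $s(K_i')<0$ for every $i$ (this is the input from \cite{zero_surg_exotic}), $K_i'$ is not $H$-slice in any $\nCP$, contradicting Step 2. Therefore $K_i$ is not $H$-slice in any $\nCP$, and in particular $K_i$ is not slice.
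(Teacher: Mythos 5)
Your Step 1 is fine: it reproduces the Manolescu--Piccirillo construction of $X'=E(D)\cup_\phi -X_0(K')$ and correctly records that $K'$ is $H$-slice only in a manifold \emph{homeomorphic} to $\#m\CP$, and your diagnosis of why this is not enough (stabilization by $S^2\times S^2$ brings in $\CPbar$ summands and kills the $s$-invariant obstruction) is accurate. But Step 2, which you yourself call the crux, is a genuine gap: it is asserted, not proved. Saying that ``one writes down a handle diagram for $W$, simplifies it by Kirby calculus to $\#n\CP$, and carries the disk for $K'$ along'' is a placeholder, and it is not even executable as stated, because your closed manifold $W=X_0(K')\cup_\phi N$ depends on the hypothetical $H$-slice disk $D$ for $K$ in $\#m\CP$ through $N=E(D)$; there is no explicit diagram to simplify, since the disk is exactly the object whose existence is in question (and which the theorem ultimately rules out). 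The paper also does not proceed case by case through the $23$ pairs, and it never standardizes $W$ (equivalently $X'$) at all.

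What is missing is the uniform mechanism of Lemma \ref{lem:stable_trace_diffeo}: every pair $(K_i,K_i')$ arises from a special RBG-link $L=(R,r)\cup(B,0)\cup(G,0)$ with $R=U$ and $r\ge 0$, so the framed knot $(R,r)$ is slice in $V=\rCPbar$; removing a neighborhood of that slice disk from $V^\circ$ plays the role of ``dotting'' $R$, and cancelling it against the $2$-handles on $B$ and $G$ extends the zero surgery homeomorphism to a diffeomorphism $X_0(K_i)\# \rCPbar\cong X_0(K_i')\#\rCPbar$ --- a statement about the RBG-link alone, independent of any hypothetical disk. Gluing $E(D)$ to both sides (Corollary \ref{cor:HtpyX_stable_diff}) then gives $X'\#\rCP\cong X\#\rCP$, so if $K_i$ were $H$-slice in $\nCP$ then $K_i'$ would be $H$-slice in the honest standard $\#(n+r)\CP$, contradicting $s(K_i')=-2$ via Lemma \ref{lem:HSlice_s_inv}. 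Note the role of the sign condition $r\ge 0$: it is precisely what makes the stabilizing summand $-V=\rCP$ land on the $\CP$ side where the $s$-invariant inequality still applies (for $r<0$ one would stabilize by $\CPbar$ and the argument would collapse, which is why Section \ref{sec:signs} of the paper has to treat that case separately). Your proposal contains no substitute for this lemma, so the contradiction in your Step 3 is not reached.
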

The difficulty here is the vanishing of invariants that obstruct sliceness or $H$-sliceness of $K_i$.
This is the same problem that arises when trying to determine sliceness of the Conway knot.
For the Conway knot, Piccirillo finds a knot $K$ that shares a zero trace with the Conway knot.
Since sliceness is determined by the zero trace, the Conway knot is slice if and only if $K$ is slice.
Calculating $s(K)$ shows that $K$ is not slice and therefore the Conway knot is not slice \cite{Conway_knot}.
One might hope to extend the zero surgery homeomorphism $S^3_0(K_i) \rightarrow S^3_0(K_i')$ to a zero trace diffeomorphism.
For many of these pairs, the zero surgery homeomorphisms do not extend and it appears they may never have homeomorphic traces.
Without a trace diffeomorphism, we can't access the trace embedding lemma to identify sliceness of $K$ and $K'$.
Instead, we extend $S^3_0(K_i) \rightarrow S^3_0(K_i')$ to a diffeomorphism of the traces after blowing up.
This allows us to relate their slice properties \textit{stably} and work with $H$-sliceness of $K_i'$ instead of the difficult $K_i$.

Manolescu and Piccirillo considered an infinite six parameter family of zero surgery homeomorphisms.
They found the knots $K_1,\dots,K_{23}$ by checking $3375$ zero surgery homeomorphisms in this family.
One might try to expand the search and consider more pairs from this family.
We show that such an effort would be in vain and prove a stronger version of Theorem \ref{thm:notslice}.
\begin{thm}
\label{thm:MP_family_fullversion}
Let $(K,K')$ be a pair of knots with homeomorphic zero surgeries from the Manolescu-Piccirillo family.
\begin{enumerate}
  \item If $K$ is $H$-slice in some $\nCP$, then $s(K') \ge 0$.
  \item If $K$ is $H$-slice in some $\nCPbar$, then $s(K') \le 0$.
  \item If $K$ is slice, then $s(K')=0$.
\end{enumerate}
\end{thm}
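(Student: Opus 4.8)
The plan is to promote the zero-surgery homeomorphism $\zsg$ to a \emph{diffeomorphism} of $4$-dimensional traces after a single blow-up, transport $H$-sliceness across it using the trace embedding lemma, and conclude with the $s$-invariant obstruction to $H$-sliceness in $\#n\CP$ of \cite{gener_s_inv}. Write $X_0(K)$ for the zero-trace of $K$ (that is, $B^4$ with a $0$-framed $2$-handle attached along $K$); recall that $H_2(X_0(K))\cong\Z$ is generated by the square-zero class $\sigma_K$ obtained by capping the core of the $2$-handle with a Seifert surface, and recall the trace embedding lemma: a knot $J$ is $H$-slice in a closed oriented $4$-manifold $W$ if and only if $X_0(J)$ embeds smoothly in $W$ with $\sigma_J$ mapping to $0$ in $H_2(W)$. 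The first and hardest step is to unwind the RBG-link/handle description underlying the Manolescu--Piccirillo family and show that for \emph{every} pair $(K,K')$ in it, $\phi$ extends to a diffeomorphism $X_0(K)\#\CP \to X_0(K')\#\CP$, and likewise $X_0(K)\#\CPbar \to X_0(K')\#\CPbar$. Here the two knots are read off from one $4$-manifold presented by two handle diagrams related by handle slides, and one expects that rebuilding $X_0(K)$ and $X_0(K')$ from the common piece amounts to $2$-handle attachments whose diagrams differ by those slides together with a single $(\pm1)$-framed unknot --- a blow-up. Checking that this succeeds uniformly across the six parameters of $\MPfam$, rather than only for the $23$ tabulated pairs, is the main obstacle.

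Once the stable diffeomorphism $\Psi\colon X_0(K)\#\CP\to X_0(K')\#\CP$ is in hand, the remaining steps are routine. Suppose $K$ is $H$-slice in some $\#n\CP$. By the trace embedding lemma $X_0(K)$ embeds smoothly in $\#n\CP$; blowing up the pair gives an embedding $X_0(K)\#\CP\hookrightarrow\#(n+1)\CP$, and composing with $\Psi^{-1}$ gives an embedding $\iota\colon X_0(K')\#\CP\hookrightarrow\#(n+1)\CP$. Since $\sigma_{K'}$ has vanishing self-intersection in $X_0(K')\#\CP$, so does $\iota_*(\sigma_{K'})$ in the positive definite manifold $\#(n+1)\CP$, forcing $\iota_*(\sigma_{K'})=0$. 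Choosing the connected-sum region for $X_0(K')\#\CP$ disjoint from the $0$-handle of $X_0(K')$ and from the core of its $2$-handle, that core survives and $\iota$ carries it to a disk bounded by $K'$ in the complement of a smooth ball in $\#(n+1)\CP$, null-homologous because its class, capped with a Seifert surface, is $\iota_*(\sigma_{K'})=0$; hence $K'$ is $H$-slice in $\#(n+1)\CP$.

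Finally, the obstruction of \cite{gener_s_inv} gives $s(K')\ge 0$, which is (1); running the identical argument with the blow-up by $\CPbar$ and the mirror obstruction ($H$-slice in some $\#m\CPbar$ implies $s\le 0$) gives (2). For (3), a slice knot $K$ is $H$-slice in $S^4$, so $X_0(K)$ embeds in $S^4$; blowing up yields $X_0(K)\#\CP\hookrightarrow\CP$ and $X_0(K)\#\CPbar\hookrightarrow\CPbar$, and the argument of the previous paragraph then shows $K'$ is $H$-slice in $\CP$ and in $\CPbar$, whence $s(K')\ge 0$ and $s(K')\le 0$, so $s(K')=0$. Note this only constrains the \emph{sign} of $s(K')$ rather than obstructing $H$-sliceness of $K'$ outright --- the unavoidable price of passing through a blow-up, reflecting the fact (noted in the introduction) that these zero-surgery homeomorphisms need not extend to honest trace diffeomorphisms.
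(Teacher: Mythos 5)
There is a genuine gap, and it sits exactly where you flagged ``the main obstacle.'' Your strategy needs $\phi$ to extend, after a single blow-up of \emph{either} sign, to diffeomorphisms $X_0(K)\#\CP\cong X_0(K')\#\CP$ and $X_0(K)\#\CPbar\cong X_0(K')\#\CPbar$. For the RBG-link construction underlying $\MPfam$, the stabilization that works is dictated by the framed knot $(R,r)$ (here $R=U$, $r=a+b$): as in Lemma \ref{lem:stable_trace_diffeo}, one gets $X_0(K)\#W\cong X_0(K')\#W$ only when $(R,r)$ is slice in $W$, so for $r>0$ one must stabilize by $\# r\CPbar$ (not one copy, and not $\CP$), and for $r<0$ only by $\#|r|\CP$. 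You cannot choose the sign, and the two signs never both occur unless $r=0$. Consequently your transport argument covers essentially the $r\ge 0$ half of the family (this is the paper's Theorem \ref{main_thm} together with Corollary \ref{cor:HtpyX_stable_diff}): there, $H$-sliceness of $K$ in $\nCP$ yields $H$-sliceness of $K'$ in $\#(n+r)\CP$ and hence $s(K')\ge 0$ by Lemma \ref{lem:HSlice_s_inv}. But when $r<0$ the only available stabilized diffeomorphism moves an embedding $X_0(K)\hookrightarrow\nCP$ into $\nCP\#|r|\CPbar$, a manifold of mixed signature where the $s$-invariant bound of Lemma \ref{lem:HSlice_s_inv} says nothing. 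Since $r=a+b$ ranges over all integers in the six-parameter family, the $r<0$ case cannot be avoided, and your proposal has no mechanism for it.

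The paper handles $r<0$ by a genuinely different argument, which is the bulk of the work: using the ``smallness'' of the links $\MPfam$ (at most two slides of $B$ or $G$ over $R$), Lemma \ref{lem:small_RU_minusoneframing} constructs explicit $(-1)$-framed slice disks for $K$ and $K'$ in $\#|r|\CP$ meeting one exceptional sphere geometrically in three points and the others nullhomologously; deleting a neighborhood of that sphere leaves a nullhomologous cobordism from $-F_{2,1}(1)$ to the knot, and the computation $s(F_{2,1}(1))=-2$ from \cite{gener_s_invariant} (cited in the paper as \cite{gener_s_inv}) combined with Lemma \ref{lem:HSlice_s_inv} gives $s(K),s(K')\ge 0$ \emph{unconditionally} when $r<0$ (Corollary \ref{cor:small_RU_sign_of_s}); the theorem then follows as in Theorem \ref{thm:small_RBG}. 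Note that the homological bookkeeping here is precisely what lets one avoid the unproven adjunction-type Conjecture \ref{conj:PF_is_1_bound}, which is what a naive ``$K'$ is $(-1)$-slice in some $\nCP$'' conclusion would require. Your second and third paragraphs (transporting $H$-sliceness through a stabilized trace diffeomorphism, the square-zero-implies-null argument in a definite manifold, and deducing (2) and (3) from (1)) are fine and match the paper's Corollary \ref{cor:HtpyX_stable_diff} and Lemma \ref{lem:Hslice_TEL}, but without an argument for $r<0$ the theorem as stated is not proved.
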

This is proved in a more general form in Theorem \ref{thm:small_RBG}.
This theorem rules out finding an exotic $S^4$ (or $\nCP$) using the $s$-invariant and zero surgery homeomorphisms from the Manolescu-Piccirillo family.
This does not show the stronger statement that such $(K,K')$ can not be used to construct an exotic $S^4$.
In principal, a $(K,K')$ from the Manolescu-Piccirillo family could still have $K$ slice and $K'$ not slice.
Such a pair would exhibit an exotic $S^4$, but this theorem shows that the $s$-invariant $s(K')$ would not obstruct sliceness and detect exoticness.

In Theorem \ref{thm:full_adj_version}, we attempt to generalize the above theorem assuming a conjectural inequality for Rasmussen's $s$-invariant.
We establish conditions on when these methods apply and would rule out using the $s$-invariant with zero surgery homeomorphisms to construct an exotic $S^4$ or $\nCP$.
These methods are not special to the $s$-invariant and might also apply to other obstructions to $H$-sliceness in $\nCP$ or other $4$-manifolds.
As new and stronger concordance invariants are inevitably constructed, there will surely be new attempts to construct exotic $4$-manifolds using $H$-sliceness and zero surgery homeomorphisms.
Such hypothetical future attempts will likely need to revisit this work.

Our methods do not apply to all zero surgery homeomorphisms and leaves hope that the $s$-invariant could be used to find an exotic $S^4$ or $\nCP$.
We construct an infinite family of zero surgery homeomorphisms which are not susceptible to our methods.
These zero surgery homeomorphisms are not a serious attempt at constructing an exotic $S^4$ or $\nCP$.
Instead they are an illustration of the continued viability of Manolescu and Piccirillo's approach and an invitation to the topological community to continue it.

Manolescu and Piccirillo also consider pairs of knots $(K,K')$ which have have homeomorphic $0$-surgeries, $K$ is slice, and $K'$ has indeterminate sliceness.
They give a family of knots $\{J_n \ | \ n \in \Z \}$ related by annulus twist homeomorphisms $\phi_n: S^3_0(J_0) \rightarrow S^3_0(J_n)$.
$J_0$ bounds a ribbon disk and this gives a family of homotopy spheres $Z_n = E(D) \cup_{\phi_n} -X_0(J_n)$.
\begin{thm}
The homotopy $4$-spheres $Z_n$ are all diffeomorphic to $S^4$ (and therefore each $J_n$ is slice).
\end{thm}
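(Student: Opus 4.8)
The plan is to prove the stronger statement $Z_n\cong S^4$ directly and to read off sliceness of $J_n$ afterward. Keep in mind that a priori $Z_n$ is only known to be a homotopy $4$-sphere, hence homeomorphic to $S^4$ by Freedman, so the content is entirely smooth. The reduction I would use rests on the standard fact that a slice disk $D_J$ for a knot $J$ induces a splitting $S^4 = E(D_J)\cup_{\mathrm{id}} X_0(J)$, the two pieces being glued along $S^3_0(J)$ by the identity: writing $S^4 = B^4\cup B^4$ with $D_J$ properly embedded in one of the two balls, a tubular neighborhood $\nu(D_J)$ glued to the other ball is precisely $X_0(J)$ (the framing of a slice disk is $0$), and the leftover is $E(D_J)$. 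Since the statement to prove presents $Z_n$ as $E(D)\cup_{\phi_n}(-X_0(J_n))$ (the orientation reversal being bookkeeping, immaterial for recognizing $S^4$), it suffices to produce, for every $n$, a ribbon disk $D_n$ for $J_n$ together with a diffeomorphism $\Phi_n\colon E(D)\to E(D_n)$ restricting to $\phi_n$ on the boundary. Granting this, $Z_n\cong E(D_n)\cup_{\mathrm{id}}(-X_0(J_n))\cong S^4$ (up to orientation); and since $X_0(J_n)$ then embeds in $Z_n\cong S^4$, the trace embedding lemma shows $J_n$ is slice — indeed $D_n$ is an explicit ribbon disk.

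Thus everything comes down to the key step: the annulus twist homeomorphism $\phi_n$ extends over the ribbon disk exterior. I would carry this out by Kirby calculus. Fix the annulus presentation $(A,b)$ of $J_0$ with core curve $c$, an unknot in $S^3$, so that $J_n=\tau_A^{\,n}(J_0)$ for the associated annulus twist $\tau_A$ and $\phi_n$ is realized by surgery on the boundary curves of $A$, equivalently by $n$ successive blow-ups/blow-downs supported near $c$. Because $D$ is the ribbon disk coming from this presentation, $E(D)$ carries a handle decomposition with a single $0$-handle, some $1$-handles, and one fewer $2$-handle, in which the bands of the presentation are explicit; correspondingly, a Kirby diagram for $Z_n$ — obtained by attaching to $E(D)$ the dual $2$-handle of the $2$-handle of $X_0(J_n)$ and then a $4$-handle — differs from one for $Z_0$ only by $n$ full twists inserted on the strands that run through $A$. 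The central move is a handle slide over the $2$-handle attached along the annulus core $c$: one application of it converts a full twist into the index shift $n\rightsquigarrow n-1$. Iterating removes all $n$ twists and returns the diagram of $Z_0 = E(D)\cup_{\mathrm{id}}(-X_0(J_0))$, which is standard by the slice-disk splitting above (or, at the level of diagrams, cancels completely since $\phi_0=\mathrm{id}$ and $J_0$ is ribbon).

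The main obstacle is exactly this last step — making the handle slide along $c$ legitimate and confirming it actually lowers the twist count rather than relocating it. Three-dimensionally $c$ links $J_0$, so one must invoke the balanced nature of the annulus presentation (the knot passes through $A$ algebraically trivially, so that $c$ is null-homologous, hence bounds a disk, in $E(D)$) and arrange the band $b$ to be pushed off the twisting region; only with these in place does the slide make sense and does the iteration terminate at $Z_0$. Equivalently, phrased geometrically, the task is to realize the self-homeomorphism $\tau_A$ of $S^3_0(J_0)$ as the restriction of an ambient twist of the pair $(B^4,D)$ along a properly embedded annulus with unknotted core, supported in a ball, so that the ribbon disk is simply transported to $D_n$. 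I expect verifying this — where the specific geometry of the Manolescu--Piccirillo ribbon knot and its annulus presentation actually enters — to be the technical heart of the argument, with everything else formal.
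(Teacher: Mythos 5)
Your reduction is sound: if for each $n$ you could exhibit a ribbon disk $D_n$ for $J_n$ and a diffeomorphism $E(D)\to E(D_n)$ restricting to $\phi_n$ on the boundary, then $Z_n\cong E(D_n)\cup -X_0(J_n)\cong S^4$ would follow. But that extension statement is not a formality you can defer --- it is the entire content of the theorem, and your sketch of how to prove it does not go through as written. In the diagram you describe ($E(D)$ with the dual $2$-handle of $X_0(J_n)$ attached along $\phi_n^{-1}(\mu_{J_n})$, plus a $4$-handle) there is no $2$-handle attached along the annulus core $c$, so ``slide over the $2$-handle attached along $c$'' is not an available move; to create such a handle you would need to introduce and later cancel extra handles, and it is exactly at that point that one must verify the twists are removed rather than relocated and that the boundary identification is the one $\phi_n$ prescribes. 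You acknowledge this yourself (``the technical heart of the argument''), so the proposal is a plan with the decisive step unproven, not a proof. Note also that merely knowing $J_n$ is slice (or ribbon) would not suffice: you need the exterior diffeomorphism to restrict to $\phi_n$, which is strictly stronger, so the gap cannot be closed by citing sliceness results for annulus-twisted ribbon knots.

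For comparison, the paper avoids ever extending $\phi_n$ over $E(D)$. It turns the decomposition upside down, $-Z_n = X_0(J_n)\cup_{\phi_n} -E(D)$, and first proves a general drawing lemma: after enlarging $D$ to a ribbon disk link (adding an unknot $L$ around each ribbon band), the disk-link exterior has a handle decomposition with only a $0$-handle and $1$-handles, so upside down $-E(D)$ contributes a $0$-framed $2$-handle along $L$ plus $3$- and $4$-handles. Since $3$- and $4$-handles attach uniquely, the only gluing data to track through $\phi_n$ is the single unknot $L$, and $L$ is fixed because the annulus twist is supported away from it. The resulting diagram of $-Z_n$ is $2$-handles on $J_n\cup L$, and explicit moves (slide the band over $L$, drag it under the annulus, spin it around the annulus to undo the twists) give the same diagram for all $n$, hence $Z_n\cong Z_0=S^4$; alternatively, since $L$ is an unknot, Property $2R$ finishes from that diagram. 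The strategic difference is precisely that the paper reduces the boundary-gluing bookkeeping to one curve disjoint from the twisting region, whereas your right-side-up approach forces you to control the full action of $\phi_n$ on the disk exterior, which is the part left undone.
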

To prove this, we draw these manifolds upside down as $X_0(J_n) \cup -E(D)$.
Drawing the exterior upside down directly with the standard algorithm is difficult and results in a complicated diagram.
Instead we will describe an algorithm for any ribbon knot $K$ bounding a ribbon disk $D$, how to draw a Kirby diagram of $S^4$ as $X_0(K) \cup -E(D)$.
We can then use this to draw a Kirby diagram of $-Z_n$ showing the trace embedding $X_0(J_n) \subset-Z_n$.
Using this diagram of $-Z_n$, we then show that each $Z_n$ is standard.
\subsection{Conventions}
All manifolds are smooth and oriented.
Any embeddings or homeomorphism are orientation preserving.
Boundaries are oriented with outward normal first.
All homology groups have integral coefficients.
\subsection{Acknowledgments}
The author would like to thank Ciprian Manolescu and Lisa Piccirillo for helpful correspondences as well as for allowing the author to use the images in Figure \ref{fig:5knots}.
The author would also like to thank his advisors Bob Gompf and John Luecke for their help and support.
As noted in \cite{zero_surg_exotic}, some cases of the above results were already established by others.
Dunfield and Gong showed that $K_6, \dots, K_{21}$ are not slice using their program to compute twisted Alexander polynomial \cite{twist_alexpoly_software} and Kyle Hayden showed that $Z_1$ is standard

\section{Preliminaries}
\subsection{H-slice Knots and Zero Surgery Homeomorphisms}
\label{sec:MP_background}
We will need to recall Manolescu and Piccirillo's proposed construction of an exotic $S^4$ or $\nCP$.
To simplify the discussion, we combine these cases into one and define $\# 0 \CP$ to be $S^4$ via the empty connected sum.
Whenever $\nCP$ appears it will be implicit that $n \ge 0$ and likewise with $\nCPbar$.

Let $X$ be a smooth, closed, oriented $4$-manifold and let $X^\circ = X-\inter(B^4)$.
\begin{mydef}
A knot $K \subset S^3$ is said to be $H$-slice in $X^{\circ}$ or $X$ if $K$ is the boundary of a smoothly, properly embedded disk $D$ in $X^\circ$ such that $[D] = 0 \in H_2(X^\circ, \partial X^\circ)$.
\end{mydef}
$H$-sliceness is a generalization of sliceness: a knot is slice (in $B^4$) if and only if it is $H$-slice in $S^4$.
Recall that the $k$-trace $X_k(K)$ of $K$ is obtained by attaching a $2$-handle to $B^4$ along $K$ with framing $k$.
The classical trace embedding lemma asserts a knot $K$ is slice if and only if the zero trace $X_0(K)$ smoothly embeds in $S^4$.
We have an analogous statement for a knot to be $H$-slice in $X$.
\begin{lem}[H-slice Trace Embedding Lemma, Lemma 3.5 of \cite{zero_surg_exotic}]
A knot $K$ is $H$-slice in $X$ if and only if $-X_0(K)$ smoothly embeds in $X$ by an embedding that induces the zero map on second homology.
\label{lem:Hslice_TEL}
\end{lem}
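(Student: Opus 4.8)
The plan is to adapt the proof of the classical trace embedding lemma (the case $X=S^4$): the two implications are mutually inverse cut-and-paste constructions, and what is new beyond the classical case is the bookkeeping on $H_2$ and on orientations. For the forward implication, suppose $K$ bounds a smoothly, properly embedded disk $D$ in $X^\circ=X-\inter(B^4)$ with $[D]=0\in H_2(X^\circ,\partial X^\circ)$. A disk is contractible, so its normal bundle is trivial and a tubular neighborhood is $\nu(D)\cong D^2\times D^2$ with $D=D^2\times\{0\}$ and $\nu(D)\cap\partial X^\circ=\nu(K)$ a solid torus. First I would check that the framing of $K$ induced by $\nu(D)$ is the $0$-framing: cap $D$ off with a Seifert surface for $K$ pushed slightly into $X^\circ$ to form a closed surface $\widehat D$, whose class in $H_2(X^\circ)$ corresponds to $[D]$ under the isomorphism $H_2(X^\circ)\xrightarrow{\cong}H_2(X^\circ,\partial X^\circ)$ (an isomorphism since $\partial X^\circ=S^3$); the induced framing is then the self-intersection $[\widehat D]^2$, which vanishes because $[D]=0$. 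Thus $\nu(D)$ is a $0$-framed $2$-handle attached to the removed ball along $K$, so that ball together with $\nu(D)$ is obtained by attaching a $0$-framed $2$-handle to a $4$-ball along $K$; tracking the orientations inherited from $X$ (and using $-X_0(K)=X_0(\overline K)$, with $\overline K$ the mirror of $K$) identifies it with $-X_0(K)$. This copy of $-X_0(K)$ sits in $X=X^\circ\cup B^4$ with complement $X^\circ-\inter\nu(D)$, and its $H_2$-generator, which is represented by $\widehat D$, maps to the class of $[D]=0$; hence the embedding is zero on $H_2$.

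For the converse, given an embedding $e\colon -X_0(K)\hookrightarrow X$ that is zero on $H_2$, take a handle decomposition $-X_0(K)=B^4\cup h^2$ with one $0$-handle $B^4$ and one $2$-handle $h^2$, whose attaching knot, after the same orientation tracking, is $K\subset S^3=\partial B^4$ with framing $0$. Let $C$ be the core of $h^2$, a disk with $\partial C=K$, and set $X^\circ:=X-\inter(e(B^4))$, which is $X$ with an open ball removed (all smoothly embedded $4$-balls in $X$ being ambiently isotopic). Then $e(C)$ is a smoothly, properly embedded disk in $X^\circ$ bounded by $e(K)\subset\partial X^\circ$, so $K$ is $H$-slice in $X$ provided $[e(C)]=0\in H_2(X^\circ,\partial X^\circ)$; and under $H_2(X^\circ,\partial X^\circ)\cong H_2(X^\circ)\cong H_2(X)$ (using Mayer--Vietoris for $X=X^\circ\cup e(B^4)$), the class $[e(C)]$ is the image under $e_*$ of the $H_2$-generator of $-X_0(K)$, hence $0$ by hypothesis.

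The step I expect to be the main obstacle is this homological and orientation bookkeeping, common to both directions: identifying precisely, via the long exact sequence of the pair $(X^\circ,\partial X^\circ)$ and Mayer--Vietoris for $X=X^\circ\cup B^4$, the three conditions ``$[D]=0$ in $H_2(X^\circ,\partial X^\circ)$'', ``$\nu(D)$ induces the $0$-framing on $K$'', and ``the embedding of $-X_0(K)$ is zero on $H_2$'', and nailing down the conventions that produce the minus sign on the trace and keep the attaching knot equal to $K$ rather than its mirror or reverse. The underlying geometry, trading a tubular neighborhood of a disk for a $2$-handle and back, is routine.
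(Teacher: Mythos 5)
Your proposal is correct and follows the standard cut-and-paste argument: this paper does not reprove the lemma but quotes it from Lemma 3.5 of \cite{zero_surg_exotic}, whose proof is exactly the trade you describe (the removed $B^4$ together with $\nu(D)$ is a copy of $-X_0(K)$ whose $H_2$-generator maps to $[D]=0$, and conversely the core of the $2$-handle of an embedded $-X_0(K)$ gives the null-homologous slice disk), with the framing and orientation bookkeeping handled just as you indicate. No gaps to flag.
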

Suppose $K$ is $H$-slice in $X$ with $H$-slice disk $D \subset X^\circ$ and there is a zero surgery homeomorphism $\phi : S^3_0(K) \rightarrow S^3_0(K')$.
Let $\nu(D)$ be a tubular neighborhood of $D$ and let the exterior of $D$ be $E(D) = X^\circ - \nu(D)$.
The exterior naturally has boundary $S^3_0(K)$ and we can define $X' = E(D) \cup_\phi -X_0(K')$.
$K'$ is $H$-slice in $X'$ by Lemma \ref{lem:Hslice_TEL} and if $X$ is simply connected, $X'$ is homeomorphic to $X$ (Lemma $3.3$ of \cite{zero_surg_exotic}).
If $K'$ is not $H$-slice in $X$, then $H$-sliceness of $K'$ smoothly distinguishes $X'$ from $X$.

Constructing an exotic $\nCP$ with zero surgery homeomorphisms sounds promising, but there are difficulties with this approach which have only recently been resolved.
The first challenge was overcoming the Akbulut-Kirby conjecture which asserts that knots with homeomorphic zero surgeries are concordant.
$H$-sliceness is preserved by concordance and therefore this construction would be more difficult than producing counterexamples to the Akbulut-Kirby conjecture.
Fortunately, Yasui disproved the Akbulut-Kirby conjecture in $2015$.
In doing so, he showed that concordance invariants, such as the Ozsv\'{a}th-Szab\'{o} $\tau$-invariant or Rasmussen's $s$-invariant, could distinguish knots in concordance that share a zero surgery \cite{yasui_akb_kirby_conj}.

This brings us to the second difficulty with this strategy.
We need to obstruct $H$-sliceness of $K'$ in the standard $\nCP$ without obstructing $H$-sliceness of $K$ in a homotopy $\nCP$.
Obstruction from gauge and Floer theoretic concordance invariants, like the $\tau$-invariant, tend to apply in any homotopy $\nCP$ \cite{tau_adj_ineq}.
In particular, such invariants always vanish on knots slice in a homotopy $S^4$.
However, Rasmussen's $s$-invariant does provide an obstruction to $H$-sliceness in $\nCP$ that may not hold in a homotopy $\nCP$.
\begin{lem}[Theorem $1.8$ of \cite{gener_s_inv}]
\label{lem:HSlice_s_inv}
If $\Sigma \subset \nCP - (\inter(B^4) \sqcup \inter(B^4))$ is a null homologous, oriented cobordism from a link $L_1$ to $L_2$ with each component of $\Sigma$ having a boundary component in $L_1$, then $s(L_2) -s(L_1) \ge \chi(\Sigma)$.
In particular, if a knot $K$ is $H$-slice in some $\nCP$, then $s(K) \ge 0$.
\end{lem}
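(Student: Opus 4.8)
The plan is to prove the inequality $s(L_2)-s(L_1)\ge\chi(\Sigma)$ first when $\Sigma$ lies in the product cobordism $S^3\times I$, and then to reduce a cobordism in a twice-punctured $\nCP$ to that case using a handle decomposition adapted to $\Sigma$. For the model case I would recall Rasmussen's construction of $s$ from the Lee deformation of Khovanov homology: the Lee homology $\mathit{Lee}(L)$ of an oriented link carries a quantum filtration and, over $\mathbb Q$, is spanned by canonical generators $\mathfrak s_{\mathfrak o}$ indexed by the orientations $\mathfrak o$ of $L$; the integers $s_{\min}(L)$ and $s_{\max}(L)$ are read off from the filtration levels of these generators and $s(L)$ is (a normalization of) their average. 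The essential input, due to Rasmussen and the later work on functoriality of Khovanov homology, is that an oriented cobordism $\Sigma\subset S^3\times I$ from $L_1$ to $L_2$ induces a map on Lee homology that is filtered of degree $\chi(\Sigma)$ — births and deaths shift the quantum grading by $+1$ and saddles by $-1$, so that the total shift is $\chi(\Sigma)$ — and that when every component of $\Sigma$ has a boundary component in $L_1$ this map sends the canonical generator of $L_1$ for a compatible orientation to a nonzero multiple of the corresponding canonical generator of $L_2$ plus terms of strictly lower filtration. Comparing filtration levels on the two ends, and running the argument for the orientations realizing $s_{\min}$ and $s_{\max}$, yields $s(L_2)-s(L_1)\ge\chi(\Sigma)$ for every such $\Sigma$ in $S^3\times I$.

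Next I would reduce the general case to this one. Write $W=\nCP-(\inter(B^4)\sqcup\inter(B^4))$. Since $\nCP$ has a handle decomposition with one $0$-handle, $n$ two-handles attached along a $+1$-framed $n$-component unlink, and one $4$-handle, $W$ is diffeomorphic to $(S^3\times I)\cup h^2_1\cup\cdots\cup h^2_n$ where the $h^2_i$ are attached along a $+1$-framed unlink in $S^3\times\{1\}$; let $E_1,\dots,E_n\subset W$ be the resulting exceptional spheres, so $E_i\cdot E_i=+1$. I would put $\Sigma$ in general position with respect to the cocore disks of the $h^2_i$; since $[\Sigma]=0$ in $H_2(W,\partial W)$, the surface $\Sigma$ pairs to zero with each $[E_i]$ and hence meets every cocore in algebraically zero points. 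The crux is then a surface-surgery argument that uses the \emph{sign} of the framing: the geometric intersections of $\Sigma$ with each cocore can be removed in cancelling pairs — a sheet running over a handle and back — by trading each pair for local modifications of $\Sigma$ supported in $S^3\times I$ (a positively twisted band together with a Reidemeister-I-type kink) that do not decrease $\chi$ and create no closed components; equivalently, one checks that the cobordism map on Lee homology induced by a $+1$-framed $2$-handle attachment, a blow-up map, is filtered of degree $0$ and non-vanishing on canonical generators. Once $\Sigma$ is disjoint from all the cocores it can be isotoped into the sub-cobordism $S^3\times I\subset W$ with $L_1,L_2$ unchanged and with Euler characteristic no smaller than before, and the model case finishes the proof. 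The same argument in $\nCPbar$ uses $-1$-framed handles, reversing the filtration shift and giving $s(L_2)-s(L_1)\le\chi(\Sigma)$; this is where the opposite sign in the $\nCPbar$ statements comes from.

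For the ``in particular'' clause, suppose $K$ is $H$-slice in $\nCP$ with disk $D$. I would remove a small ball about an interior point of $D$, turning $D$ into an annulus $A\subset W$ from an unknot $U$ (a meridian of the removed ball, lying in the new $S^3$) to $K$; then $A$ is null-homologous, $\chi(A)=0$, and its one component meets $U$. The main inequality gives $s(K)-s(U)\ge 0$, and since $s(U)=0$ we get $s(K)\ge 0$.

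The hard part will be the passage of $\Sigma$ through the $+1$-framed $2$-handles: one must show precisely that a null-homologous surface can be pushed off the cocores without decreasing $\chi$ and without producing closed components, and it is exactly here that the sign of the framing enters, so that the conclusion starts to distinguish $\nCP$ from $\nCPbar$. Carrying this out cleanly amounts to fixing a sufficiently functorial version of Lee theory together with a blow-up formula for its cobordism maps; by comparison the $S^3\times I$ case and the deduction of the corollary are routine given Rasmussen's work.
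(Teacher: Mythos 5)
First, note that this lemma is not proved in the paper at all: it is quoted verbatim as Theorem~1.8 of \cite{gener_s_inv}, so there is no internal proof to compare against; your proposal is an attempt at reproving the cited theorem. Your model case in $S^3 \times I$ (Lee homology, canonical generators, filtered degree $\chi(\Sigma)$, nonvanishing when every component of $\Sigma$ touches $L_1$) and your ``in particular'' deduction (puncture the $H$-slice disk to get a null-homologous annulus from the unknot to $K$, so $s(K) \ge s(U) + 0 = 0$) are both correct and standard.

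The genuine gap is in the reduction from $\nCP$ to $S^3 \times I$. Your claim that the geometric intersections of $\Sigma$ with the cocores of the $+1$-framed $2$-handles ``can be removed in cancelling pairs \dots\ by local modifications that do not decrease $\chi$ and create no closed components'' is exactly the step that fails: cancelling an oppositely-signed pair of intersection points requires tubing two sheets of $\Sigma$ together along an arc running over the handle, and each such tube costs $\chi(\Sigma)$ two, so after clearing all cocores the inequality you can import from $S^3 \times I$ is strictly weaker than the one claimed. The ``equivalently'' fallback --- that the map on Lee homology induced by a $+1$-framed $2$-handle attachment is filtered of degree $0$ and nonzero on canonical generators --- is not something one can simply check: Lee cobordism maps are only defined for cobordisms in $S^3 \times I$, and establishing such a blow-up property is essentially the whole content of the theorem, not an input to it. The actual argument of \cite{gener_s_inv} (sketched in this paper just before Corollary~\ref{cor:small_RU_sign_of_s}) does not push $\Sigma$ off the exceptional spheres at all: one removes small tubular neighborhoods of the $\CPone$'s, tubed together, so that $\Sigma$ meets the resulting boundary $S^3$ in one of the links $F_{p,p}(1)$ ($2p$ strands, $p$ of each orientation, through a positive full twist), leaving a null-homologous cobordism in $S^3 \times I$ from a disjoint union of such links to $L_2$; the crux is then the computation of $s\bigl(F_{p,p}(1)\bigr)$ for the whole infinite family, which is where the sign of the framing (and hence the asymmetry between $\nCP$ and $\nCPbar$) genuinely enters. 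Your proposal elides precisely this computation, so as written it does not constitute a proof.
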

\noindent By reversing orientation, we see that if $K$ is $H$-slice in $\nCPbar$, then $s(K) \le 0$.
Furthermore, if $K$ is $H$-slice in $\nCPbar$ and $\nCP$ for some $n$, then $s(K) = 0$.
Such knots are called \textit{biprojectively H-slice} by Manolescu and Piccirillo.
These include all slice knots and some non-slice knots like the figure eight knot.

Putting this together, we can construct an exotic $\nCP$ if we have a pair of knots $(K,K')$ such that
\[K \text{ is H-slice in }\nCP,   \ \ \ s(K')<0, \ \ \ \phi: S^3_0(K) \rightarrow S^3_0(K')\]
\noindent For $H$-sliceness in $\# 0\CP = S^4$, i.e. standard sliceness, we could also consider $s(K') \neq 0$ to obstruct sliceness.
However, Manolescu and Piccirillo focus on negative $s(K')$ and in their search find no viable examples with positive $s(K')$.

Recall that a framed knot is a knot $K$ in $S^3$ together with a framing $k \in \Z$.
We will denote a framed knot by $(K,k)$ and extend this naturally to framed links.
To conduct their search, Manolescu and Piccirillo need a source of zero surgery homeomorphisms and so they define special RBG-links.
\begin{mydef}
A special RBG-link $L = (R, r) \cup (B,0) \cup (G,0) \subset S^3$ is a three component integrally framed link where $R$ has framing $r \in \Z$, $B$ and $G$ have framing $b=g=0$.
Furthermore, surgery on this framed link has $H_1(S^3_{r,0,0}(R,B,G)) = \Z$ and if $\mu_R$ is a meridian of $R$, there exist link isotopies
\[R \cup G \cong R \cup \mu_R \cong R \cup B\]
\end{mydef}
Given a special RBG-link we can define a pair of knots and a zero surgery homeomorphism between them.
The following proposition and its proof is the first half of Theorem $1.2$ of \cite{zero_surg_exotic} for a special RBG-link.
We reproduce it here because understanding the special RBG-link construction will be \textbf{fundamental} to proving our key lemmas.
\begin{figure}
\centering

\subfloat[]{{
   \fontsize{10pt}{12pt}\selectfont
   \def\svgwidth{2in}
   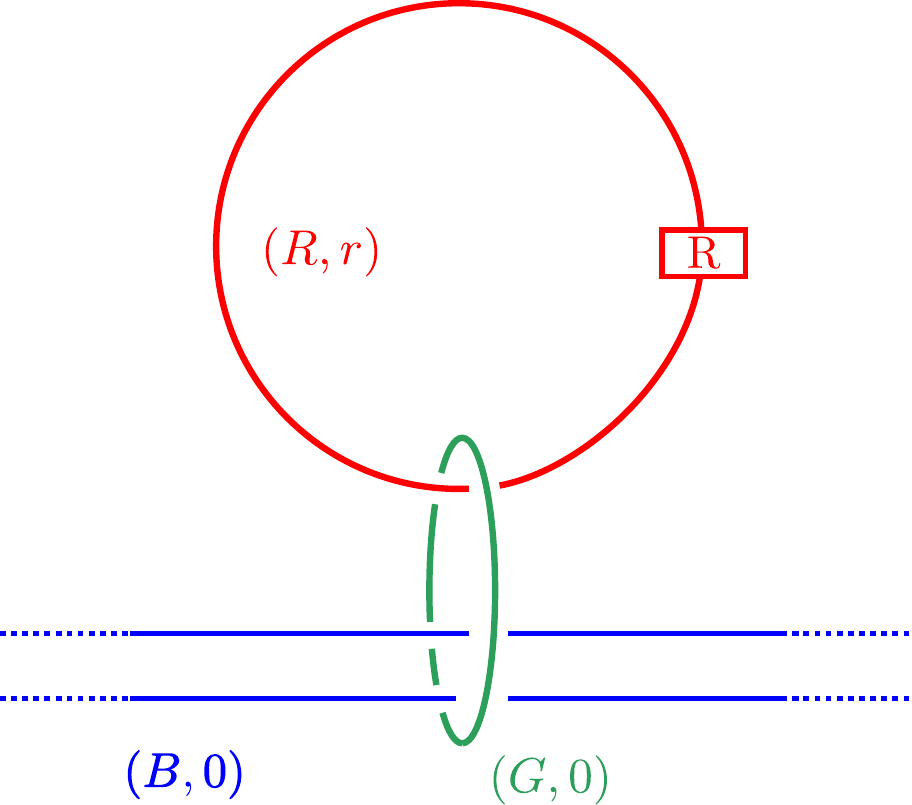
}
\label{fig:slamdunk1}
} \ \
\subfloat[]{{
   \fontsize{10pt}{12pt}\selectfont
   \def\svgwidth{2in}
   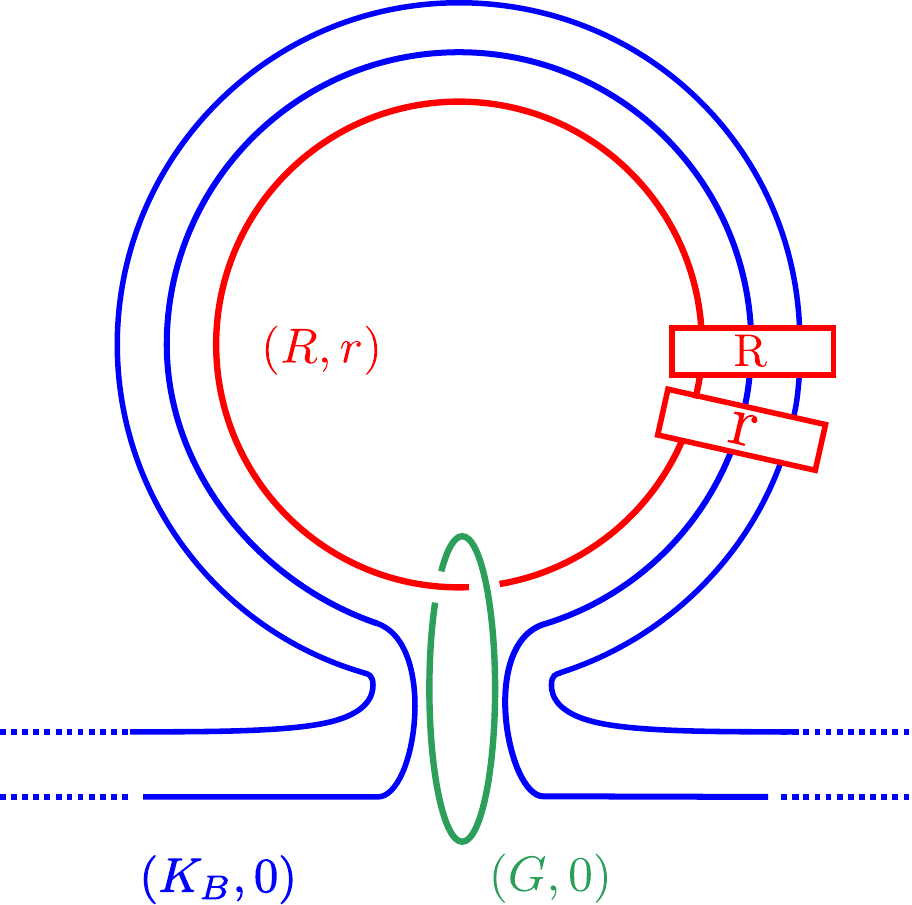
}
\label{fig:slamdunk2}
}
\caption{Initial slide used to exhibit $K_B$}
\label{fig:slamdunk_all}

\end{figure}
\begin{prop}
\label{prop:special_RBG_homeo}
For any special RBG-link $L$, there is an associated pair of knots $K_B$ and $K_G$ and a homeomorphism $\phi_L : S^3_0(K_B) \rightarrow S^3_0(K_G)$.
\end{prop}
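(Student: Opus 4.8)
The plan is to unpack the definition of a special RBG-link and extract the two knots $K_B, K_G$ together with the homeomorphism by performing Kirby calculus on the surgery diagram. First I would consider the framed link $L = (R,r) \cup (B,0) \cup (G,0)$ and observe that since $B$ has framing $0$, I may introduce a meridian $\mu_B$ of $B$ with framing $0$ and slam-dunk: sliding $B$ over $\mu_B$ (or, more precisely, using the slam-dunk move in reverse) lets me trade the $0$-framed unknot-linking-$B$ picture for a description in which $B$ becomes a meridian and the surgery coefficient on the remaining components is adjusted. Concretely, I would use the isotopy $R \cup B \cong R \cup \mu_R$ guaranteed by the special RBG-link hypothesis: after this isotopy, $B$ sits as a meridian of $R$, and then sliding $R$ over $B=\mu_R$ allows me to cancel, leaving a surgery diagram on $R \cup G$ alone. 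Because surgery on $R \cup \mu_R$ with the given framings is a homeomorphism recording move (a $0$-framed meridian of $R$ converts the $r$-surgery on $R$ into $0$-surgery on a new knot), the upshot is that $S^3_{r,0,0}(R,B,G) \cong S^3_0(K_B)$ for a knot $K_B$ obtained from $G$ after handle slides over (the now-absorbed) $R$; this is exactly the content of Figure \ref{fig:slamdunk_all}.

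Next I would run the symmetric argument with the roles of $B$ and $G$ interchanged, using the other isotopy $R \cup G \cong R \cup \mu_R$. The same sequence of slam-dunk and handle-slide moves then identifies $S^3_{r,0,0}(R,B,G) \cong S^3_0(K_G)$ for a knot $K_G$, this time built from $B$. Since both $K_B$ and $K_G$ arise as Kirby-calculus descriptions of the \emph{same} closed $3$-manifold $S^3_{r,0,0}(R,B,G)$ — whose first homology is $\Z$ by hypothesis, consistent with it being zero-surgery on a knot — composing the two diffeomorphisms (each a composition of explicit handle moves, hence a diffeomorphism, in particular a homeomorphism) yields
\[
\phi_L : S^3_0(K_B) \xrightarrow{\ \cong\ } S^3_{r,0,0}(R,B,G) \xrightarrow{\ \cong\ } S^3_0(K_G).
\]
I would take care to orient everything compatibly with the paper's conventions (outward normal first, orientation-preserving maps) so that $\phi_L$ is genuinely an orientation-preserving homeomorphism.

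The main obstacle I expect is bookkeeping rather than conceptual: making sure the slam-dunk / handle-slide sequence actually terminates in a \emph{single} $0$-framed knot rather than a multi-component link, which is where the precise form of the special RBG-link axioms (the framing $r$ on $R$, framings $0$ on $B$ and $G$, and the two link isotopies matching $B$ and $G$ to $\mu_R$) is essential — these are exactly the hypotheses that force the cancellation to go through and that pin down the framing on the surviving knot to be $0$. A secondary point to be careful about is that the isotopies $R\cup B \cong R\cup\mu_R \cong R\cup G$ are isotopies of links in $S^3$ and must be used to transport the \emph{entire} surgery diagram, so the resulting knots $K_B$ and $K_G$ depend on tracking where the third component goes under these isotopies; I would state explicitly that $K_B$ is the image of $G$ and $K_G$ the image of $B$ after the respective reductions. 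Since the statement only asserts existence of the pair and the homeomorphism, no invariant computation is needed here — that is deferred to the later lemmas — so once the Kirby moves are laid out the proposition follows.
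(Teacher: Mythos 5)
Your proposal follows essentially the same route as the paper: use the hypothesis that $B$ (resp.\ $G$) is isotopic to a zero-framed meridian of $R$ to slam dunk $R$ together with that component, track the third component by sliding it over $R$, note that the homology hypothesis $H_1(S^3_{r,0,0}(R,B,G))=\Z$ forces the induced framing on the surviving knot to be zero, and compose the two resulting homeomorphisms through $S^3_{r,0,0}(R,B,G)$. That is exactly the paper's argument, so the substance is correct; but three local points need cleaning up. First, your opening move of ``introducing a meridian $\mu_B$ of $B$ with framing $0$'' is not a legal move on surgery diagrams (adding a component changes the $3$-manifold) and is not needed; the slam dunk here rests only on the given isotopies $R\cup B\cong R\cup\mu_R\cong R\cup G$. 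Second, after cancelling $R$ against its zero-framed meridian you are left with a single framed knot, namely the slid image of the third component, not ``a surgery diagram on $R\cup G$ alone''; and the slides involved are of that third component over $R$ (to clear it off the meridian disk), not of $R$ over $B$. Third, your labelling is reversed relative to the paper's convention: there $K_B=\psi_B(B)$ is the image of $B$ under the slam dunk that uses $G$ as the zero-framed meridian (and symmetrically for $K_G$), whereas you declare $K_B$ to be the image of $G$. For the bare statement of this proposition the naming is immaterial, but later arguments (e.g.\ the proof of Lemma \ref{lem:stable_trace_diffeo} and Figure \ref{fig:slamdunk_all}) depend on the paper's convention, so you should swap your labels.
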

\begin{proof}
The assumption that $(G,0)$ is zero framed meridian of $(R,r)$ implies there is a slam dunk homeomorphism $\psi_B: S^3_{r,0}(R,G) \rightarrow S^3$.
Let $K_B = \psi_B(B)$, the slam dunk homeomorphism takes a framing on $B$ to a framing on $K_B$ that surgers to $S^3_{r,b,g}(R,B,G)$.
The assumption on homology implies that this must be the zero framing.
Reusing notation, the slam dunk on $(R,r) \cup (G,0)$ induces a homeomorphism $\psi_B : S^3_{r,b,g}(R,B,G) \rightarrow S^3_0(K_B)$.
We can do the same with $(G,0)$ by slam dunking $(R,r) \cup (B,0)$ to get a homeomorphism $\psi_G : S^3_{r,b,g}(R,B,G) \rightarrow S^3_0(K_G)$ and the desired homeomorphism is then $\phi_L = \psi_B \circ \psi_G^{-1} : S^3_0(K_B) \rightarrow S^3_0(K_G)$.
\end{proof}
Given a diagram of $L$, we can perform this construction diagrammatically.
We take $L$ to be a surgery diagram of $S^3_{r,b,g}(R,B,G)$ and slam dunk $(R,r) \cup (G,0)$.
To do this, first isotope $(G,0)$ into meridianal position so $(G,0)$ bounds a disk $\Delta_G$.
This disk intersects $(B,0)$ in some number of points as shown in Figure \ref{fig:slamdunk1}.
Slide $(B,0)$ over $(R,r)$ so that it no longer intersects $\Delta_G$ as shown in Figure \ref{fig:slamdunk2}.
To finish the slam dunk, delete $(R,r)$ and $(G,0)$ leaving $(K_B,0)$.
\subsection{Projective Slice Framings}
\label{sec:proj_slice_framing}
Let $W$ be a smooth, closed, oriented $4$-manifold.
If $D$ is a disk properly embedded in $W^\circ$, then $D$ has a well defined tubular neighborhood $\nu(D) = D \times \mathbb{R}^2$.
Then $D = D \times \{0\}$ has a parallel pushoff $D^* = D \times \{p\} \subset \nu(D)$ for some nonzero $p$.
$K^* = \partial D^*$ is a knot parallel to $K$ and defines a framing on $K$.
\begin{mydef}
A framed knot $(K, k)$ in $S^3$ is said to be slice in $W$ or $W^\circ$ if $K$ is the boundary of a disk $D$ smoothly, properly embedded in $W$ which induces the framing $k$ on $K$.
\end{mydef}
If we say $K$ is $k$-slice in $W$ with $k \in \Z$, then we mean $(K,k)$ is slice in $W$.
This framing $k$ will be equal to the negative of the self intersection number of $D$, i.e. $k = -[D] \cdot [D]$.
The exterior $E(D) = W^\circ - \nu(D)$ of $D$ has boundary $\partial E(D)$ naturally identified with $S^3_k(K)$.
We can view the deleted $\nu(D)$ and $int(B^4)$ as a trace and get a trace embedding lemma.
\begin{lem}[Framed Trace Embedding Lemma, Lemma 3.3 of \cite{tracelemma_spine}]
\label{lem:framed_TEL}
A framed knot $(K,k)$ in $S^3$ is smoothly slice in $W$ if and only if $-X_k(K)$ smoothly embeds in $W$.
\end{lem}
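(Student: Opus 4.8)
The plan is to establish both implications from a single decomposition of $W$ along the chosen $4$-ball, trading a tubular neighborhood of a slice disk for the trace $X_k(K)$ and back. Throughout I will write $W = W^\circ \cup_{S^3} B^4$ with $S^3 = \partial W^\circ = \partial B^4$, and the one point I will have to be careful about is that the boundary orientation $\partial B^4$ inherits from the refilled ball is the reverse of the one it carries as $\partial W^\circ$; tracking this sign is exactly what turns $X_k(K)$ into $-X_k(K)$ in the statement.

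For the forward implication I would start from a disk $D$ properly embedded in $W^\circ$ with $\partial D = K$ and inducing the framing $k$ (so $k = -[D]\cdot[D]$). Since $D$ is a disk its normal bundle in $W^\circ$ is trivial, so $\nu(D) \cong D^2 \times D^2$ with $\nu(D) \cap \partial W^\circ = \nu(K)$, and the gluing data records the framing $k$. I would then observe that $\nu(D) \cup_{\nu(K)} B^4 \subset W$ is a $0$-handle ($B^4$) together with a $2$-handle ($\nu(D)$) attached along $K$ with framing $k$; because this $2$-handle is glued on along $\nu(K) \subset \partial B^4$, where $\partial B^4$ carries the orientation opposite to $\partial W^\circ$, the resulting handlebody is $X_{-k}(\overline{K}) \cong -X_k(K)$ (using the standard identity $-X_k(K) \cong X_{-k}(\overline{K})$ for traces). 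This exhibits a smooth embedding $-X_k(K) \hookrightarrow W$. I would sanity-check the sign against $W = S^4$, $k=0$ (the classical lemma), and against $W = \CPbar$, where it must correctly distinguish $(U,1)$, which is $1$-slice in $\CPbar$, from $(U,-1)$, which is not.

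For the reverse implication I would take an embedding $-X_k(K) \hookrightarrow W$, present $-X_k(K)$ as a handlebody with one $0$-handle $B^4$ and one $2$-handle $h$ (attached along $\overline{K}$ with framing $-k$), and delete the interior of that $0$-handle from $W$: since any two embedded $4$-balls in a connected $4$-manifold are ambiently isotopic, the result is diffeomorphic to $W^\circ$. The core of $h$ is then a properly embedded disk in $W^\circ$ whose boundary is the attaching circle of $h$ with the attaching framing; and since the boundary $3$-sphere of $W \setminus \inter(B^4)$ carries the orientation opposite to $\partial B^4$, the same orientation reversal as before converts $\overline{K}$ with framing $-k$ back into $K$ with framing $k$. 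Hence $(K,k)$ is slice in $W$. The two constructions are visibly inverse to one another, which completes the equivalence.

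I expect the only real obstacle to be this orientation-and-framing bookkeeping — pinning down that the glued-up manifold is $-X_k(K)$ rather than $X_k(K)$, and that the framing extracted in the reverse direction is exactly $k$; the remaining ingredients (triviality of the normal bundle of a disk, uniqueness of $4$-balls, the standard handle picture of a trace, and the identity $-X_k(K)\cong X_{-k}(\overline{K})$) are routine. This is the natural generalization of the classical trace embedding lemma and runs in parallel to the $H$-slice version, Lemma~\ref{lem:Hslice_TEL}.
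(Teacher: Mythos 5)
Your proof is correct and takes essentially the approach the paper intends (the lemma is quoted from the reference, with the preceding remark that the deleted $\nu(D)$ and $\inter(B^4)$ together form a trace): the forward direction is exactly the decomposition $W = E(D) \cup (\nu(D) \cup B^4)$ with $\nu(D) \cup B^4 \cong -X_k(K)$, and the reverse direction inverts it by deleting the $0$-handle (unique up to ambient isotopy) and taking the core of the $2$-handle as the slice disk. Your orientation bookkeeping, via $-X_k(K) \cong X_{-k}(\overline{K})$ and the reversal between $\partial W^\circ$ and $\partial B^4$, together with the $\CPbar$ sanity check, is accurate.
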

This will allow us later to construct framed slice disks by finding trace embeddings.
We will be working with framed slice disks in $\nCPbar$ and in this setting, it is often more practical to construct the disks directly.
To construct knots $H$-slice in some $\nCPbar$, one can use a full positive twist along algebraically zero strands as in Lemma $3.2$ of \cite{zero_surg_exotic}.
This generalizes to framed sliceness, but now we need to keep track of how the framing changes.
\begin{lem}
\label{lem:frame_CP_cob}
Suppose $(K,k)$ is the framed boundary of a disk $D \subset W^\circ$ and $\Delta$ a disk embedded in $S^3$ intersecting $K$ transversely in $\ell$ points counted with sign.
Let $K_+$ be a knot obtained from $K$ by performing a positive full twist through $\Delta$, then $(K_+,k+\ell^2)$ is slice in $W \# \CPbar$.
\end{lem}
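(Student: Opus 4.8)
The plan is to build the new disk $D_+ \subset (W \# \CPbar)^\circ$ directly by modifying $D$ near the twisting region, and then compute the induced framing by tracking self-intersection numbers. First I would set up the connected sum: view $W \# \CPbar$ as $W$ with a $\CPbar$ summand, and recall that geometrically a full positive twist on $\ell$ strands running through $\Delta$ can be realized by tubing those strands into a copy of the sphere generating $H_2(\CPbar) \cong \Z$ (with self-intersection $-1$). Concretely, push the disk $\Delta$ slightly into $B^4 \subset W^\circ$ so that its boundary-parallel copy bounds, take the $\CPbar$ line (a $-1$-sphere) meeting a neighborhood of that pushed-in $\Delta$ once, and tube each of the $\ell$ strands of $D$ passing through $\Delta$ into this sphere. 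This produces a new properly embedded disk $D_+$ in $(W \# \CPbar)^\circ$ whose boundary is exactly $K_+$, since the tubing operation on the $S^3$ side performs precisely a full positive twist on those $\ell$ strands.

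Next I would identify the homology class: $[D_+] = [D] + \ell\, e \in H_2(W \# \CPbar, \partial)$, where $e$ is the class of the generator of $H_2(\CPbar)$ (the coefficient is $\ell$ because we tube in $\ell$ strands, counted with sign, into the same sphere). Then the self-intersection computation is
\[
[D_+] \cdot [D_+] = [D] \cdot [D] + \ell^2 (e \cdot e) = [D] \cdot [D] - \ell^2,
\]
using $e \cdot e = -1$ and that $[D]$ and $e$ live in orthogonal summands. Since the framing induced by a properly embedded disk is the negative of its self-intersection number (as recalled in Section \ref{sec:proj_slice_framing}), the framing on $K_+$ is $-[D_+]\cdot[D_+] = -[D]\cdot[D] + \ell^2 = k + \ell^2$, which is the claimed value.

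The main obstacle I expect is the careful geometric verification that the tubing construction genuinely realizes a full positive twist on the boundary link and does not introduce unwanted intersections or change the boundary in some unintended way — in particular, checking the sign conventions so that tubing into a $-1$-sphere yields a \emph{positive} twist (as opposed to tubing into $\CP$, which would give the negative version), and confirming that the $\ell$ strands can be simultaneously tubed into one sphere cleanly. It may be cleanest to argue this via the framed trace embedding lemma (Lemma \ref{lem:framed_TEL}) instead: start from the embedding $-X_k(K) \hookrightarrow W$, and show that twisting the attaching region corresponds to a blow-up, so that $-X_{k+\ell^2}(K_+)$ embeds in $W \# \CPbar$. Either route requires the same core computation, so the remaining work is bookkeeping of framings and orientations rather than anything deep.
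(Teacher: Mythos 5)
Your framing arithmetic at the end is fine, but the construction that is supposed to produce $D_+$ has a genuine gap: tubing $D$ into (parallel copies of) the exceptional sphere is an operation carried out entirely in the interior of $(W \# \CPbar)^\circ$, so it changes the homology class and the self-intersection of the surface but it cannot change its boundary. Whatever surface you obtain after tubing still has boundary $K$, not $K_+$; there is no ``operation on the $S^3$ side'' induced by an interior tube. (Two secondary problems: the twist acts on all geometric intersection points of $K$ with $\Delta$, of which there may be more than $|\ell|$, and $\ell$ parallel copies of a $-1$-sphere intersect one another, so tubing each strand into its own copy does not even stay embedded without further work.) The mechanism that actually converts $K$ into $K_+$ is an identification of boundaries by a blowdown, and that is exactly the paper's two-line proof: attach a $(-1)$-framed $2$-handle to $W^\circ$ along $\partial \Delta$; since $\partial \Delta$ is an unknot this is a blow-up, so the result is $(W \# \CPbar)^\circ$; the disk $D$ is left completely unchanged, its boundary now lies in $S^3_{-1}(\partial \Delta)$, and the blowdown homeomorphism $S^3_{-1}(\partial \Delta) \to S^3$ carries $(K,k)$ to $(K_+, k+\ell^2)$ by the standard framing change under blowing down, namely $k \mapsto k + \mathrm{lk}(K,\partial\Delta)^2 = k + \ell^2$. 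No new disk is built and no homology bookkeeping is needed.

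Your fallback suggestion --- use the framed trace embedding lemma (Lemma~\ref{lem:framed_TEL}) and interpret the twist as a blow-up --- is the right idea and is essentially the paper's argument read through trace embeddings, but as written it is only a gesture: the assertion that ``twisting the attaching region corresponds to a blow-up'' together with the $\ell^2$ framing shift \emph{is} the content of the lemma, so deferring it to ``bookkeeping'' defers the entire proof. Replace the tubing construction with the handle-attachment/blowdown argument above and the proof closes immediately.
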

\begin{proof}
Attach a $-1$ framed $2$-handle to $W^\circ$ along $\partial \Delta$ to get $W^\circ \cup_{(\partial \Delta, -1)} 2h = (W \# \CPbar)^\circ$.
Then $D$ in $W^\circ \cup_{(\partial \Delta, -1)} 2h$ has boundary $(K,k) \subset S^3_{-1}(\partial \Delta)$ and a blowdown of $(\partial \Delta,-1)$ identifies it with $(K_+,k+\ell^2)$.
\end{proof}
The above lemma can be used to show that an arbitrary knot $K$ will be slice in some $\nCPbar$.
First observe that a crossing change can be realized as a positive twist on two strands.
Then a sequence of crossing changes that unknot $K$ can be used to construct a slice disk $D$ for $K$ with some framing in $\nCPbar$.
Our arguments require checking that certain framed knots associated to a zero surgery homeomorphism are slice in some $\nCPbar$ or $\nCP$.
To quantify this we define the projective slice framings.
\begin{mydef}
Let $K$ be a knot in $S^3$.
The positive projective slice framing $\PF_+(K)$ of $K$ is the smallest framing $k$ such that $(K,k)$ is slice in some $\nCPbar$.
The negative projective slice framing $\mathbb{PF}_-(K)$ is defined analogously.
\end{mydef}
Homological considerations imply that a strictly negative framed knot cannot be slice in a negative definite $4$-manifold and therefore $\PF_+(K) \ge 0$.
Furthermore, in a negative definite $4$-manifold, the only homology class with self intersection number zero is the zero homology class.
\begin{lem}
     $\PF_+(K) = 0$ if and only if $K$ is $H$-slice in some $\nCPbar$.
     $\PF_-(K) = 0$ if and only if $K$ is $H$-slice in some $\nCP$.
     $\PF_+(K) = \PF_-(K) = 0$ if and only if $K$ is biprojectively $H$-slice.
\label{lem:proj_Hslice}
\qed
\end{lem}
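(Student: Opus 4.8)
The plan is to prove the three biconditionals by unwinding the definition of $\PF_\pm$ and combining it with the two homological facts stated immediately above the lemma. First I would treat the forward directions. Suppose $\PF_+(K) = 0$; then by definition $(K,0)$ is slice in some $\nCPbar$, i.e.\ $K$ bounds a properly embedded disk $D$ in $(\nCPbar)^\circ$ inducing the $0$-framing. Since the framing equals $-[D]\cdot[D]$, this disk has self-intersection number zero, and by the stated fact that in a negative definite $4$-manifold the only homology class with self-intersection zero is the zero class, we get $[D] = 0 \in H_2((\nCPbar)^\circ, \partial)$ (modulo the standard identification of $H_2$ of the punctured manifold with $H_2$ of the closed manifold, so this homological conclusion transfers). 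Hence $D$ is an $H$-slice disk and $K$ is $H$-slice in that $\nCPbar$. The same argument with orientations reversed, using $\PF_-$ and $\nCP$ (which is positive definite), gives the second forward implication, and the third is just the conjunction of the first two.

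Next I would handle the reverse directions. If $K$ is $H$-slice in some $\nCPbar$ with $H$-slice disk $D$, then $[D] = 0$, so $-[D]\cdot[D] = 0$, meaning $D$ induces the $0$-framing on $K$; thus $(K,0)$ is slice in that $\nCPbar$, so $\PF_+(K) \le 0$. Combined with the already-noted inequality $\PF_+(K) \ge 0$ (a strictly negative framed knot cannot be slice in a negative definite manifold), we conclude $\PF_+(K) = 0$. Again the $\PF_-$ statement is the mirror, and the biprojective statement follows by taking both together and recalling that biprojectively $H$-slice means $H$-slice in some $\nCPbar$ and in some $\nCP$ (possibly for different $n$), which is exactly $\PF_+(K) = \PF_-(K) = 0$.

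The only genuinely delicate point — and the step I'd expect to need a sentence of care — is the identification of the framing induced by an $H$-slice disk with the zero framing, and conversely, the passage between "self-intersection zero" and "null-homologous." Both directions rest on the formula $k = -[D]\cdot[D]$ together with the fact that in $\pm\nCP$ the intersection form is diagonal (definite), so self-intersection zero forces the class to vanish; there is no subtlety about the puncture since removing a ball does not change $H_2$. Everything else is bookkeeping with orientation reversal ($\nCP \leftrightarrow \nCPbar$, $\PF_+ \leftrightarrow \PF_-$) and unwinding definitions, so the proof should be short.
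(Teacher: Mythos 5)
Your proposal is correct and is exactly the argument the paper intends: the lemma is stated with no written proof precisely because it follows from the two facts recorded just before it ($\PF_+(K)\ge 0$ and the vanishing of self-intersection-zero classes in a definite manifold) together with $k=-[D]\cdot[D]$, which is how you argue both directions. Your handling of the puncture and of "possibly different $n$" in the biprojective case matches the paper's conventions, so nothing further is needed.
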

Once we have one of these framings, Lemma \ref{lem:frame_CP_cob} immediately realizes any framing larger than $\PF_+(K)$.
Take $D$ a disk realizing $\PF_+(K)$ and $\Delta$ a meridianal disk of $K$.
\begin{cor}
If $k \ge \PF_+(K)$, then $(K,k)$ is slice in some $\nCPbar$.
If $k \le \PF_-(K)$, then $(K,k)$ is slice in some $\nCP$.
\qed
\label{cor:atleast_PF}
\end{cor}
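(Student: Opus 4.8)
The plan is to iterate Lemma~\ref{lem:frame_CP_cob}, starting from a disk that realizes the projective slice framing and using a meridian of $K$ as the twisting disk. By definition of $\PF_+(K)$, I would first fix a disk $D$ smoothly properly embedded in $(\nCPbar)^\circ$ with framed boundary $(K, \PF_+(K))$. Let $\Delta$ be a meridional disk of $K$ in $S^3$, so that $\Delta$ meets $K$ transversely in a single point, i.e.\ $\ell = \pm 1$ and $\ell^2 = 1$. A positive full twist of $K$ through $\Delta$ does not change the isotopy type of $K$, since the twist region contains only one strand and hence the twist is realized by an ambient isotopy of $S^3$; but Lemma~\ref{lem:frame_CP_cob} records that it raises the framing by $\ell^2 = 1$ at the cost of one connected summand $\CPbar$. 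Thus $(K, \PF_+(K)+1)$ is slice in $\#(n+1)\CPbar$.

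Next I would iterate this step. Applying it $m$ times produces a disk showing that $(K, \PF_+(K)+m)$ is slice in $\#(n+m)\CPbar$ for every integer $m \ge 0$. Given any $k \ge \PF_+(K)$, one sets $m = k - \PF_+(K) \ge 0$ and concludes that $(K,k)$ is slice in some $\nCPbar$, which is the first assertion.

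The second assertion follows by reversing orientations. If $(K,k)$ bounds a disk $D$ in $W^\circ$ with $k = -[D]\cdot[D]$, then the mirror knot $\overline{K}$ bounds the mirror disk in $\overline{W}^\circ$ with framing $-k$; since $\overline{\nCPbar} = \nCP$, this gives $\PF_+(\overline{K}) = -\PF_-(K)$. Now for $k \le \PF_-(K)$ we have $-k \ge \PF_+(\overline{K})$, so the first assertion yields a disk for $(\overline{K},-k)$ in some $\nCPbar$, and mirroring back produces a disk for $(K,k)$ in some $\nCP$. (Equivalently, one re-runs the twisting argument with a negative full twist, which lowers the framing by $\ell^2$ at the cost of a $\CP$ summand.) The argument is entirely routine; the only point that requires a moment's care is the observation that a full twist through a meridional disk is an isotopy of $S^3$, so it changes the framing but not the knot.
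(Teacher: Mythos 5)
Your proposal is correct and matches the paper's intended argument: the two sentences preceding the corollary indicate exactly this — take a disk realizing $\PF_+(K)$, take $\Delta$ a meridional disk of $K$, and apply Lemma \ref{lem:frame_CP_cob} (iterated) so each positive twist through one strand leaves $K$ unchanged but raises the framing by one at the cost of a $\CPbar$ summand, with the $\PF_-$ statement following by mirroring.
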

We can construct framed slice disks in $\nCPbar$, but we would also like to have lower bounds on these framings as well.
To do so, we can use the adjunction inequality for the Ozsv\'{a}th-Szab\'{o} $\tau$-invariant.
\begin{lem}[Theorem 1.1 of \cite{tau_adj_ineq}]
\label{lem:tau_adj}
Let $W$ be a smooth $4$-manifold with negative definite intersection form, $b_1(W) = 0$, and $\partial W = S^3$.
Let $e_1,\dots,e_n$ be an orthonormal basis for $H_2(W)$ and if $\alpha = s_1 e_1 + \dots s_n e_n$, then let $|\alpha| = |s_1|+\dots +|s_n|$.
If $\Sigma$ is smooth, properly embedded surface in $W$ with boundary $\partial W = K$, then we have the following inequality:
\[2 \tau(K) + |[\Sigma]| + [\Sigma] \cdot [\Sigma] \le 2g(\Sigma)\]
\end{lem}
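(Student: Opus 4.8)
Since this is Theorem~1.1 of \cite{tau_adj_ineq}, I will only outline the strategy I would follow; the plan is to combine Donaldson's diagonalization theorem with a relative adjunction inequality for the four-dimensional cobordism maps of Heegaard Floer homology.

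\textbf{Standardizing the intersection form.} Capping $\partial W = S^3$ with a $4$-ball produces a closed, negative definite $4$-manifold with $b_1 = 0$, so Donaldson's theorem forces its intersection form, and hence that of $W$, to be the diagonal form $\langle -1\rangle^{\oplus n}$; this is exactly what supplies an orthonormal basis $e_1,\dots,e_n$ and makes $|[\Sigma]|$ meaningful. Writing $[\Sigma] = \sum_i s_i e_i$ one has $[\Sigma]\cdot[\Sigma] = -\sum_i s_i^2$ and $|[\Sigma]| = \sum_i|s_i|$. I would then invoke the Heegaard Floer refinement of Donaldson's theorem — for negative definite $W$ with $\partial W = S^3$ one has $c_1(\mathfrak{s})^2 + b_2(W) \le 0$ for every spin$^c$ structure $\mathfrak{s}$ — which identifies the $\mathfrak{s}$ attaining equality as precisely those with $c_1(\mathfrak{s}) = \pm e_1^\ast \pm \cdots \pm e_n^\ast$, and for which the associated map on $HF^+$ is nontrivial in all sufficiently high gradings.

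\textbf{Normalizing the surface and applying relative adjunction.} Tubing together components of $\Sigma$ along arcs in $W$, I may assume $\Sigma$ is connected, which can only raise $g(\Sigma)$ and leaves $[\Sigma]$ unchanged, and after a small isotopy $\Sigma$ is transverse to a fixed system of surfaces dual to the $e_i$. Now choose the signs in $c_1(\mathfrak{s}_0) = \pm e_1^\ast \pm \cdots \pm e_n^\ast$ so that $\langle c_1(\mathfrak{s}_0), [\Sigma]\rangle = \sum_i|s_i| = |[\Sigma]|$, and feed $\Sigma\subset W$ together with $\mathfrak{s}_0$ into the relative adjunction inequality for $\tau$, namely $2\tau(K) + \langle c_1(\mathfrak{s}_0),[\Sigma]\rangle + [\Sigma]\cdot[\Sigma] \le 2g(\Sigma)$. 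This inequality simultaneously generalizes the closed adjunction inequality $\langle c_1(\mathfrak{s}),[\Sigma]\rangle + [\Sigma]^2 \le 2g-2$ and the four-ball genus bound $\tau(K)\le g(\Sigma)$ in the special case $W = B^4$, $[\Sigma]=0$, and substituting the chosen $\mathfrak{s}_0$ yields exactly $2\tau(K) + |[\Sigma]| + [\Sigma]\cdot[\Sigma] \le 2g(\Sigma)$.

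\textbf{The main obstacle.} The hard part is establishing that relative adjunction inequality: one must verify that the Heegaard Floer cobordism map of $(W,\mathfrak{s}_0)$ is nonzero, and then control how inserting the genus-$g$ surface of self-intersection $[\Sigma]\cdot[\Sigma]$ forces a shift of the Alexander grading on $\widehat{HFK}(S^3,K)$, where $\tau(K)$ is read off. This requires the full Heegaard Floer TQFT package — the composition and blow-up formulas for cobordism maps, the surgery exact triangles, and the $HF^+$ adjunction relations — and is the technical heart of \cite{tau_adj_ineq}. In practice I expect one first reduces to the case that $W$ is built from $B^4$ by attaching $2$-handles (equivalently, presented by a negative definite surgery diagram), so that the relevant cobordism maps become computable through the integer surgery formula.
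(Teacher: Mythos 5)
This lemma is not proved in the paper at all: it is imported verbatim as Theorem 1.1 of \cite{tau_adj_ineq}, so there is no internal argument to compare your proposal against. Your outline is a reasonable reconstruction of how the cited result is established --- cap off $\partial W=S^3$ and use Donaldson's theorem to justify the orthonormal basis, pick a spin$^c$ structure with $c_1(\mathfrak{s}_0)=\pm e_1^*\pm\cdots\pm e_n^*$ whose evaluation on $[\Sigma]$ is $|[\Sigma]|$, and feed this into a relative adjunction inequality for $\tau$ obtained from nonvanishing properties of the Heegaard Floer cobordism maps of negative definite $4$-manifolds --- and you correctly locate the technical heart (the relative adjunction inequality and the nontriviality of the relevant cobordism maps) in the reference itself. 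So your proposal is a citation-plus-outline rather than an independent proof, which is exactly how the present paper uses the statement; nothing more is required here. One caution: your normalization step of tubing components of $\Sigma$ together is logically backwards for this inequality. Tubing increases $g(\Sigma)$, and since the theorem bounds $2g(\Sigma)$ from below, proving the bound for the higher-genus connected surface does not recover it for the original $\Sigma$; the statement should simply be read (as in \cite{tau_adj_ineq}) for a connected properly embedded surface, so that reduction should be dropped rather than invoked.
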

\begin{cor}
\label{cor:PF_bounds_tau}
\[\PF_-(K) + \sqrt{|\PF_-(K)|} \le 2\tau(K) \le  \PF_+(K) - \sqrt{\PF_+(K)}\]
\end{cor}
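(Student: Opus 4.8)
The plan is to derive both inequalities in Corollary \ref{cor:PF_bounds_tau} directly from the $\tau$-adjunction inequality of Lemma \ref{lem:tau_adj}, using the disks that realize the projective slice framings. First I would handle the right-hand inequality $2\tau(K) \le \PF_+(K) - \sqrt{\PF_+(K)}$. By definition, $\PF_+(K)$ is realized by a properly embedded disk $D$ in some $W^\circ = (\nCPbar)^\circ$ with $[D]\cdot[D] = -\PF_+(K)$; capping off, we may as well work in $W = \nCPbar$, which is negative definite with $b_1 = 0$ and $\partial = S^3$, so Lemma \ref{lem:tau_adj} applies with $\Sigma = D$, $g(\Sigma) = 0$. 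This gives $2\tau(K) + |[D]| + [D]\cdot[D] \le 0$, i.e. $2\tau(K) \le \PF_+(K) - |[D]|$. The remaining point is the elementary estimate $|[D]| \ge \sqrt{\PF_+(K)}$: writing $[D] = s_1 e_1 + \dots + s_n e_n$ in an orthonormal basis (so $e_i \cdot e_i = -1$), we have $\PF_+(K) = -[D]\cdot[D] = \sum s_i^2$ and $|[D]| = \sum |s_i|$, and by convexity (or Cauchy--Schwarz, $\sum |s_i| \ge \sqrt{\sum s_i^2}$) we get $|[D]| \ge \sqrt{\sum s_i^2} = \sqrt{\PF_+(K)}$. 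Combining yields the claimed bound.

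For the left-hand inequality $\PF_-(K) + \sqrt{|\PF_-(K)|} \le 2\tau(K)$, I would reverse orientation. A disk $D$ realizing $\PF_-(K)$ lives in some $\nCP$; reversing orientation turns this into a disk for the mirror $\overline{K}$ in $\nCPbar$ with self-intersection $\PF_-(K)$ (note $\PF_-(K) \le 0$, so this is a nonnegative self-intersection after the sign flip, giving $|[D]|$-term as before with $\sum s_i^2 = |\PF_-(K)|$). Applying Lemma \ref{lem:tau_adj} to $\overline{K}$ and using $\tau(\overline{K}) = -\tau(K)$ gives $-2\tau(K) + |[D]| + |\PF_-(K)| \le 0$, i.e. $2\tau(K) \ge |\PF_-(K)| + |[D]| \ge |\PF_-(K)| + \sqrt{|\PF_-(K)|} = -\PF_-(K) + \sqrt{|\PF_-(K)|}$, which rearranges to $\PF_-(K) + \sqrt{|\PF_-(K)|} \le 2\tau(K)$ since $\PF_-(K) = -|\PF_-(K)|$.

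I do not anticipate a serious obstacle here; the content is entirely packaged in Lemma \ref{lem:tau_adj}. The only mild subtlety — the step I would be most careful about — is bookkeeping of signs and orientations: making sure that $[D]\cdot[D] = -\PF_+(K)$ in the negative definite case, that orientation reversal sends $\PF_-$ of $K$ to a sliceness statement for $\overline K$ in a negative definite manifold with the right sign on the self-intersection, and that $\tau$ changes sign under mirroring. A second small point is confirming that capping the punctured $4$-manifold back to the closed one does not affect the hypotheses of Lemma \ref{lem:tau_adj}, which it does not since that lemma is already stated for $W$ with $\partial W = S^3$. Once those signs are pinned down, the inequalities follow immediately from the convexity estimate $\sum|s_i| \ge \sqrt{\sum s_i^2}$.
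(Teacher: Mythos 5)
Your proof of the right-hand inequality $2\tau(K) \le \PF_+(K) - \sqrt{\PF_+(K)}$ is exactly the paper's argument: apply Lemma \ref{lem:tau_adj} to a disk in $(\nCPbar)^\circ$ realizing $\PF_+(K)$ and use $(\sum|s_i|)^2 \ge \sum s_i^2$ to get $|[D]| \ge \sqrt{\PF_+(K)}$. The paper only writes out this half and leaves the $\PF_-$ half to mirror symmetry, so your explicit treatment of the left-hand inequality is a reasonable supplement — but it contains a sign error in the adjunction step. After reversing orientation, the disk realizing $\PF_-(K)$ sits in $(\nCPbar)^\circ$ with boundary the mirror $\overline{K}$ and self-intersection $[D]\cdot[D] = \PF_-(K) = -|\PF_-(K)|$ (it must be nonpositive in a negative definite manifold); Lemma \ref{lem:tau_adj} therefore gives $-2\tau(K) + |[D]| + \PF_-(K) \le 0$, not $-2\tau(K) + |[D]| + |\PF_-(K)| \le 0$ as you wrote. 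Your version asserts the intermediate bound $2\tau(K) \ge |\PF_-(K)| + \sqrt{|\PF_-(K)|}$, which is false in general: under mirroring ($\tau(\overline{K}) = -\tau(K)$, $\PF_-(K) = -\PF_+(\overline{K})$) it is equivalent to $2\tau(\overline{K}) \le -\PF_+(\overline{K}) - \sqrt{\PF_+(\overline{K})}$, contradicted for instance by the right-handed trefoil, for which the correct bound forces $\PF_+ \ge 4$ while your bound would force $2\tau \le$ a negative number. The fix is immediate: with the correct sign you get $2\tau(K) \ge |[D]| + \PF_-(K) \ge \sqrt{|\PF_-(K)|} + \PF_-(K)$, which is precisely the stated left-hand inequality, with no need for the final ``rearrangement'' step (which in your write-up silently weakens $-\PF_-(K)$ to $\PF_-(K)$ and thereby masks the sign slip).
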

\begin{proof}
Let $\Sigma$ be a disk in $\nCPbar -int(B^4)$ with framed boundary $(K,k)$ and let $x = 2\tau(K)$.
We have $[\Sigma] \cdot [\Sigma] = -k$ and so $x + |[\Sigma]| \le k$ by Lemma \ref{lem:tau_adj}.
We see that $k = s_1^2+\dots+s_n^2 \le (|s_1| + \cdots +|s_n|)^2 = |[\Sigma]|^2$, therefore $\sqrt{k} \le |[\Sigma]|$ and $x \le k - \sqrt{k}$.
\end{proof}
There is an analogous adjunction inequality conjectured for the $s$-invariant.
\begin{conj}[Conjecture 9.8 of \cite{gener_s_inv}]
\label{conj:s(k)_adj}
If $\Sigma$ is smooth, properly embedded surface in $W = \nCPbar -int(B^4)$ with boundary $\partial \Sigma = K$, then we have the following inequality:
\[s(K) + |[\Sigma]| + [\Sigma] \cdot [\Sigma] \le 2g(\Sigma)\]
\end{conj}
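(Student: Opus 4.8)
The plan is to push $\Sigma$ off of the exceptional spheres of $\nCPbar$ and then invoke Rasmussen's slice--genus bound; the inequality is the exact $s$--invariant analogue of the $\tau$--adjunction inequality of Lemma~\ref{lem:tau_adj}, and I will indicate below why the elementary argument stalls precisely where the Heegaard Floer proof of that lemma does not. Write $W=\nCPbar$, let $E_1,\dots,E_n\subset W$ be the standard exceptional spheres, set $e_i=[E_i]$ and $s_i=[\Sigma]\cdot e_i$, so that $[\Sigma]\cdot[\Sigma]=-\sum s_i^2$ and $|[\Sigma]|=\sum|s_i|$. First I would isotope $\Sigma$ rel $\partial$ so that it meets each $E_i$ transversely in exactly $|s_i|$ points. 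A tubular neighborhood of $E_i$ is $\CPbar\smallsetminus\inter(B^4)$, and inside it $\Sigma$ is then a union of $|s_i|$ disjoint fiber disks, whose boundary is $|s_i|$ fibers of the Hopf fibration, i.e.\ the torus link $T(|s_i|,|s_i|)$ up to mirror image. Blowing $E_i$ down replaces this neighborhood by a $B^4$, inside which one must cap off $T(m,m)$ with $m=|s_i|$; since $T(m,m)$ bounds its positive--braid fiber surface, which is $4$--genus minimizing, and since any two of its components are linked, every such cap is connected with Euler characteristic at most $m(2-m)$, so substituting it for the $m$ removed disks lowers $\chi$ by $m^2-m=2\binom{m}{2}$. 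Carrying this out for each $i$ produces a surface $\Sigma'\subset B^4$ with $\partial\Sigma'=K$ and
\[
g(\Sigma')\;=\;g(\Sigma)+\sum_{i=1}^{n}\binom{|s_i|}{2}\;=\;g(\Sigma)-\tfrac12\bigl([\Sigma]\cdot[\Sigma]+|[\Sigma]|\bigr),
\]
whence Rasmussen's inequality $s(K)\le 2g_4(K)\le 2g(\Sigma')$ \cite{Khov_slice} rearranges to exactly $s(K)+|[\Sigma]|+[\Sigma]\cdot[\Sigma]\le 2g(\Sigma)$.

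The step I expect to be the real obstacle is the very first one --- isotoping $\Sigma$ to realize its algebraic intersection numbers with the $E_i$ geometrically. If $\Sigma$ instead meets $E_i$ in $m_i>|s_i|$ points, the same computation yields only the weaker bound $s(K)\le 2g(\Sigma)+\sum m_i(m_i-1)$, and a single cancelling pair ($m_i=|s_i|+2$) already overshoots the conjectured right--hand side. In a simply connected $4$--manifold the excess $(+,-)$ pairs do bound Whitney disks, but in dimension four these need not be smoothly embedded, and the usual substitutes --- the Norman trick, tubing along $E_i$ --- merely trade a pair of geometric intersections for a change of homology class rather than cancelling it. This is exactly the difficulty that the Heegaard Floer proof of Lemma~\ref{lem:tau_adj} sidesteps, since cobordism maps record $[\Sigma]$ without reference to the geometric position of $\Sigma$ relative to any fixed surface.

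Consequently I expect the honest route to Conjecture~\ref{conj:s(k)_adj} to be the Khovanov--theoretic mirror of that argument: construct functorial cobordism maps on Lee homology for surfaces properly embedded in $\nCPbar\smallsetminus\inter(B^4)$ --- for instance via the skein lasagna modules of Morrison--Walker--Wedrich and the subsequent machinery for computing skein lasagna invariants of $4$--manifolds --- establish the corresponding blow--up relation for the surviving Lee classes, and extract the inequality just as in the $\tau$ case. The crux is that no sufficiently computable such functoriality is presently available in a form that detects the $s$--invariant; until one exists, or until the geometric intersection--reduction problem above is settled affirmatively, the inequality must remain conjectural, which is why the downstream results such as Theorem~\ref{thm:full_adj_version} are stated conditionally on it.
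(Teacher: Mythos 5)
There is nothing in the paper for your proposal to be measured against: the statement is precisely Conjecture 9.8 of \cite{gener_s_inv}, reproduced here \emph{as a conjecture}. The paper records that only the nullhomologous case is known, and every later use of the inequality (Conjecture~\ref{conj:PF_is_1_bound}, Corollary~\ref{cor:PF_0_or_1}, Theorem~\ref{thm:full_adj_version}) is explicitly conditional on it. So your submission, which ends by conceding that the inequality ``must remain conjectural,'' is correctly calibrated: you have not proved the statement, but neither does the paper, and you should not expect to be able to close the gap by elementary means.

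As for the content of your sketch: your diagnosis of where the naive argument dies is the right one. If you could isotope $\Sigma$ rel boundary to meet each exceptional sphere in exactly $|s_i|$ points, the blow-down-and-cap computation would indeed reduce the conjecture to Rasmussen's bound $s(K)\le 2g_4(K)$ (one small slip: what you need is the \emph{existence} of a connected cap for the relevant $m$-component torus link with Euler characteristic equal to $m(2-m)$, which the fiber surface supplies; your assertion that \emph{every} cap has $\chi$ at most $m(2-m)$ points the inequality the wrong way and is not what the argument uses). But eliminating excess geometric intersections by a smooth isotopy is exactly what cannot be done in general, and that is why the statement is open rather than a corollary of the slice-genus bound. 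Note that the known proof of the nullhomologous case in \cite{gener_s_inv}, and the paper's own unconditional substitute for the conjecture (Corollary~\ref{cor:small_RU_sign_of_s}), do not attempt this reduction: they leave the excess intersections in place, so that $\Sigma$ meets the boundary of the deleted neighborhoods in links of the form $F_{p,q}(1)$, and they succeed because the $s$-invariants of the particular links that arise can be computed (e.g. $s(F_{2,1}(1))=-2$), after which Lemma~\ref{lem:HSlice_s_inv} applies to the remaining nullhomologous cobordism. Any serious attack on Conjecture~\ref{conj:s(k)_adj} along classical lines would have to control $s(F_{p,q}(1))$ for all $p,q$ rather than wish the intersections away; your alternative suggestion of building full Lee-theoretic functoriality over $\nCPbar$ is a reasonable long-term hope but, as you say, is not currently available, which is consistent with the paper's decision to state the inequality only as a conjecture and to keep Theorem~\ref{thm:full_adj_version} conditional.
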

\cite{gener_s_inv} proved this in the nullhomologous case and conjectured this in analogy to the adjunction inequality for $\tau(K)$.
We replace $2\tau(K)$ with $s(K)$ like their slice genus bounds, but we limit this conjecture to $W = \nCPbar - \inter(B^4)$.
It's not clear how to approach Conjecture \ref{conj:s(k)_adj} for arbitrary negative definite $W$ or even if it should hold in such $W$.
However, if this conjecture holds, we can replace $x = 2 \tau(K)$ with $s(K)$ throughout the proof of Corollary \ref{cor:PF_bounds_tau}.
When $K$ is $(-1)$-slice in some $\nCP$, we conjecturally get the same restriction on $s(K)$ as we did when $K$ is $H$-slice is some $\nCP$.
\begin{conj}
If $K$ is $(-1)$-slice in some $\nCP$, then $s(K) \ge 0$.
\label{conj:PF_is_1_bound}
\end{conj}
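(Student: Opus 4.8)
The plan is to assume Conjecture \ref{conj:s(k)_adj} and argue exactly as in the proof of Corollary \ref{cor:PF_bounds_tau}, but now allowing the disk to have self-intersection number $+1$ rather than requiring it to be null-homologous. Suppose $K$ is $(-1)$-slice in some $\# n \CP$, so by Lemma \ref{lem:frame_CP_cob}-style reasoning (or directly from the definition of framed sliceness together with Lemma \ref{lem:framed_TEL}) there is a smoothly, properly embedded disk $\Sigma$ in $W = \nCPbar - \inter(B^4)$ with $\partial\Sigma = K$ and framing $-1$; equivalently, reversing orientation, $\Sigma \subset \nCP - \inter(B^4)$ has $[\Sigma]\cdot[\Sigma] = +1$. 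To apply Conjecture \ref{conj:s(k)_adj}, which is stated for $\nCPbar$, I would instead work with the mirror: $s(\overline{K}) = -s(K)$ and $\overline{K}$ bounds a disk $\overline{\Sigma}$ in $\nCPbar - \inter(B^4)$ with $[\overline{\Sigma}]\cdot[\overline{\Sigma}] = -1$ and genus $0$. Writing $[\overline{\Sigma}] = s_1 e_1 + \cdots + s_n e_n$ in an orthonormal basis, the self-intersection condition $\sum s_i^2 = 1$ forces exactly one $s_i$ to equal $\pm 1$ and the rest to vanish, so $|[\overline\Sigma]| = 1$.

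Feeding this into the conjectured inequality gives
\[ s(\overline{K}) + |[\overline\Sigma]| + [\overline\Sigma]\cdot[\overline\Sigma] \le 2g(\overline\Sigma) = 0, \]
that is $s(\overline{K}) + 1 + (-1) \le 0$, hence $s(\overline{K}) \le 0$, and therefore $s(K) = -s(\overline{K}) \ge 0$, as claimed. The point, just as in Corollary \ref{cor:PF_bounds_tau}, is that the contributions $|[\overline\Sigma]|$ and $[\overline\Sigma]\cdot[\overline\Sigma]$ exactly cancel when the homology class is primitive with square $-1$, so a square-$(-1)$ disk gives the same $s$-invariant obstruction as a genuinely null-homologous one. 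This is the analogue, with $s$ in place of $2\tau$, of the observation that $(-1)$-sliceness in $\nCP$ is "homologically invisible" to adjunction-type bounds.

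The only real obstacle is that this argument is entirely contingent on Conjecture \ref{conj:s(k)_adj}: as the surrounding discussion emphasizes, the $s$-invariant adjunction inequality is only a theorem in the null-homologous case, and it is not clear it holds for classes of square $-1$ in $\nCPbar$. In particular one cannot simply quote the known null-homologous case here, since the relevant disk has nonzero homology class. If instead one wanted an unconditional statement, the natural fallback is to replace $s(K)$ by $2\tau(K)$ throughout, which does follow from Corollary \ref{cor:PF_bounds_tau} together with $\PF_-(K) \le -1$ when $K$ is $(-1)$-slice; but the conjectural content — and the reason this is stated as a conjecture rather than a lemma — is precisely the upgrade from $\tau$ to $s$, which is what would be needed to extend Theorem \ref{thm:MP_family_fullversion}-type conclusions to zero surgery homeomorphisms whose associated framed knots are only $(-1)$-slice rather than $H$-slice.
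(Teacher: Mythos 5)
Your conditional derivation is exactly the paper's reasoning: the statement is left as a conjecture precisely because it follows from the conjectured $s$-invariant adjunction inequality (Conjecture \ref{conj:s(k)_adj}) by substituting $s(K)$ for $2\tau(K)$ in the proof of Corollary \ref{cor:PF_bounds_tau}, which is the same mirroring-and-adjunction computation you carry out. You also correctly identify why it cannot be made unconditional (only the null-homologous case of the $s$-adjunction inequality is known), so your proposal matches the paper's approach and its status.
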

\section{Construction of Trace Embeddings}
\subsection{Obstructing H-sliceness of \texorpdfstring{$K_1,\dots,K_{23}$}{} in \texorpdfstring{$\nCP$}{}}
\label{sec:obHslice}
In this subsection, we show that $K_1,\dots,K_{23}$ are not slice, nor are they H-slice in any $\nCP$.
To do this, we will relate H-sliceness of $K_i$ in $\nCP$ stably with $K_i'$ after a connected sum with some $\rCP$.
We see this first at the level of traces as a stable trace diffeomorphism.
\begin{lem}
\label{lem:stable_trace_diffeo}
Let $W$ be a smooth, closed, oriented $4$-manifold and $L = (R,r) \cup (B,0) \cup (G,0)$ be a special RBG-link with associated zero surgery homeomorphism $\phi_L : S^3_0(K_B) \rightarrow S^3_0(K_G)$.
If $(R,r)$ is slice in $W$, then $\phi_L$ extends to a diffeomorphism $\Phi_L: X_0(K_B) \# W \cong X_0(K_G) \# W$
\end{lem}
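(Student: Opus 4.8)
The key is to unpack the special RBG-link construction and see the traces $X_0(K_B)$ and $X_0(K_G)$ as arising from a common manifold by removing different pieces. Recall from the proof of Proposition \ref{prop:special_RBG_homeo} that surgery on $L = (R,r)\cup(B,0)\cup(G,0)$ produces $Y := S^3_{r,0,0}(R,B,G)$, and slam-dunking $(R,r)\cup(G,0)$ identifies $Y$ with $S^3_0(K_B)$ while slam-dunking $(R,r)\cup(B,0)$ identifies $Y$ with $S^3_0(K_G)$; the composite of these is precisely $\phi_L$. So the plan is to build, from the hypothesis that $(R,r)$ is slice in $W$, a single $4$-manifold $Z$ with $\partial Z = Y$ together with two identifications $Z \cong X_0(K_B)\#W$ and $Z \cong X_0(K_G)\#W$ whose composite on the boundary is $\phi_L$.

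Here is how I would build $Z$. Take $W^\circ$ and let $D_R\subset W^\circ$ be a disk with framed boundary $(R,r)$; let $E(D_R) = W^\circ - \nu(D_R)$, so $\partial E(D_R) = S^3_r(R)$. Now attach two $0$-framed $2$-handles to $E(D_R)$ along $B$ and $G$ (sitting in the $S^3$ away from the surgery on $R$). Call the result $Z$. On one hand, $\partial Z = S^3_{r,0,0}(R,B,G) = Y$. On the other hand, I want to recognize $Z$ in two ways. For the $K_B$ side: the $0$-framed handle along $G$ together with the exterior $E(D_R)$ — since $(G,0)$ is a $0$-framed meridian of $(R,r)$ — should, via the \emph{handle} version of the slam dunk, cancel the effect of removing $\nu(D_R)$; more precisely, $E(D_R)\cup_{(G,0)}2h$ should be diffeomorphic to $B^4\#W$ rel the identification of the boundary with $S^3$ (this is the slam-dunk $\psi_B$ promoted to a diffeomorphism of fillings, using that the meridional $G$-handle geometrically cancels the $D_R$-exterior back up to a $B^4$ summand). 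Attaching the remaining $(B,0)$ handle then exhibits $Z \cong X_0(K_B)\#W$, using that slam-dunking carried $B$ to $K_B$ with $0$-framing. Symmetrically, grouping the $(B,0)$ handle with $E(D_R)$ gives $Z\cong X_0(K_G)\#W$. The composite of these two boundary identifications is $\psi_B\circ\psi_G^{-1}=\phi_L$ by construction, so setting $\Phi_L$ to be this composite diffeomorphism, and checking it restricts to $\phi_L$ on $S^3_0(K_B)=\partial X_0(K_B)\#W$, completes the proof.

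The main obstacle I anticipate is making the slam-dunk argument precise at the level of $4$-manifolds rather than $3$-manifolds: the classical slam dunk is a statement about surgery descriptions of $3$-manifolds, and I need that removing $\nu(D_R)$ from $W^\circ$ and then attaching a $2$-handle along the $0$-framed meridian $G$ of $(R,r)$ returns $W^\circ$ up to diffeomorphism (equivalently, that the pair consisting of the handle along $G$ and the cocore of $\nu(D_R)$ forms a cancelling pair). This requires that $G$ be not merely a meridian of $R$ in $S^3$ but correctly positioned relative to the belt sphere of $\nu(D_R)$ — which is exactly what the special RBG-link isotopies $R\cup G\cong R\cup\mu_R$ guarantee. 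I would spell this out with a Kirby-calculus picture: draw $W^\circ$ with the disk $D_R$ as a $1$-handle/$2$-handle pair (or use the trace-embedding description $-X_r(R)\hookrightarrow W$), add the $G$- and $B$-handles, and perform the handle slides realizing the two slam dunks, tracking that the $B$ (resp. $G$) handle becomes the trace handle on $K_B$ (resp. $K_G$) with framing forced to be $0$ by the homology hypothesis $H_1(Y)=\Z$. A secondary point to handle carefully is that both slam-dunk identifications leave the $W$ summand untouched and affect only the $S^3$-and-handles part, so that the $\#W$ genuinely splits off on both sides; this follows because all the handle moves take place in a neighborhood of the original surgery diagram, disjoint from $E(D_R)$'s "$W$-part," but it deserves an explicit remark.
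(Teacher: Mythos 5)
Your proposal is correct and follows essentially the same route as the paper: the paper also forms $Z$ by removing $\nu(D)$ from $W^\circ$ and attaching the $0$-framed $2$-handles along $B$ and $G$, observes that the handle on the zero-framed meridian $(G,0)$ refills the disk exterior to give a diffeomorphism $\Psi_B : Z \to X_0(K_B)\# W$ extending the slam dunk $\psi_B$ (and symmetrically $\Psi_G$), and sets $\Phi_L = \Psi_B \circ \Psi_G^{-1}$. The point you flag as the main obstacle, that the meridional $2$-handle cancels the removal of $\nu(D)$, is justified in the paper exactly as you suggest, via the special RBG-link isotopy $R \cup G \cong R \cup \mu_R$.
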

This is a generalization of techniques \cite{akb_2dim,shake_slice_genus,Conway_knot} used to construct trace diffeomorphisms.
The $(R,r) = (U,0)$ slice in $B^4$ case of the lemma is the dualizable link construction and its proof is insightful here.
Construct a Kirby diagram by placing a dot on $R$ and attaching $2$-handles to $B$ and $G$.
Cancelling the dotted $R$ with $B$ or $G$ diagrammatically is a slam dunk resulting in $X_0(K_G)$ and $X_0(K_B)$ respectively.
The framed sliceness of $(R,r)$ in $W$ allows us to ``dot'' $(R,r)$ and proceed in a similar manner.
\begin{proof}
Let $Z$ be the $4$-manifold obtained from $W^\circ$ by removing a neighborhood $\nu(D)$ of a slice disk $D$ for $(R,r)$ and attaching $2$-handles to $(B,0)$ and $(G,0)$.
This description naturally identifies $\partial Z$ as $S^3_{r,0,0}(R,B,G)$.
The $2$-handle attached to $(G,0)$ fills $E(D)$ because $(G,0)$ is isotopic to $(\mu_R,0)$.
This is a diffeomorphism $\Psi_B : E(D) \cup_{(G,0)} 2h \rightarrow W^\circ$ extending the slam dunk homeomorphism $\psi_B: S^3_{r,0}(R,G) \rightarrow S^3$.
The $2$-handle that was attached to $(B,0)$ is now attached to $\psi_B(B,0) = (K_B,0)$ and therefore $\Psi_B$ induces a diffeomorphism of $Z$ and $W^\circ \cup_{(K_B,0)} 2h$ .
Observe that $W^\circ = B^4 \# W$ and $W^\circ \cup_{(K_B,0)} 2h$ is simply $X_0(K_B) \# W$.
Reusing notation, we have a diffeomorphism from $\Psi_B: Z \rightarrow X_0(K_B) \# W$ extending the homeomorphism $\psi_B: S^3_{r,0,0}(R,B,G) \rightarrow S^3_0(K_B)$.
Repeating this with $G$ instead and composing, we have the desired diffeomorphism $\Phi_L = \Psi_B \circ \Psi_G^{-1} : X_0(K_B) \# W \cong X_0(K_G) \# W$ extending the homeomorphism $\phi_L = \psi_B \circ \psi_G^{-1}$.
\end{proof}
When a zero surgery homeomorphism extends to a zero trace diffeomorphism, the $H$-slice trace embedding lemma identifies the $H$-sliceness of the two knots.
If the zero surgery homeomorphism doesn't extend, Lemma \ref{lem:stable_trace_diffeo} sometimes allows us to extend to a diffeomorphism after a connected sum with $-W$.
This makes the trace embedding lemma available and allows us to relate $H$-sliceness of the two knots.
\begin{cor}
\label{cor:HtpyX_stable_diff}
Let $X$ be a smooth, closed, oriented $4$-manifold.
Suppose $K_B$ is $H$-slice in $X$ and $K_G$ is then $H$-slice in $X' = E(D) \cup_{\phi_L} -X_0(K_G)$.
If $(R,r)$ is slice in $W$, then $X' \# -W$ is diffeomorphic to $X \# -W$ and therefore $K_G$ is $H$-slice in $X \# -W$.
\qed
\end{cor}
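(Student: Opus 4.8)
The plan is to combine Lemma \ref{lem:stable_trace_diffeo} with the $H$-slice trace embedding lemma (Lemma \ref{lem:Hslice_TEL}), together with the elementary observation that connect-summing a closed $4$-manifold does not disturb the ``induces the zero map on $H_2$'' condition for an embedding of a trace. The hypotheses already hand us: $K_B$ is $H$-slice in $X$; $X' = E(D) \cup_{\phi_L} -X_0(K_G)$ with $D$ an $H$-slice disk for $K_B$ in $X$; and $(R,r)$ is slice in $W$. Since by construction $K_G$ is $H$-slice in $X'$ (this is exactly the last sentence of the discussion preceding the corollary, which in turn is Lemma \ref{lem:Hslice_TEL}), the only thing that needs proof is the diffeomorphism $X' \# -W \cong X \# -W$, after which the final clause is immediate: $K_G$ is $H$-slice in $X' \cong$ (stably) $X \# -W$.

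First I would unpack $X'$ as $X' = E(D) \cup_{\phi_L} -X_0(K_G)$ and connect-sum with $-W$. Because $X_0(K_G) \# W$ appears in Lemma \ref{lem:stable_trace_diffeo}, I want $-X_0(K_G)$ paired with $-W$; so I take $X' \# -W = \big(E(D) \cup_{\phi_L} -X_0(K_G)\big) \# -W$ and push the connect-sum into the $-X_0(K_G)$ piece, giving $E(D) \cup_{\phi_L} \big(-X_0(K_G) \# -W\big) = E(D) \cup_{\phi_L} -\big(X_0(K_G) \# W\big)$. Now apply Lemma \ref{lem:stable_trace_diffeo}: since $(R,r)$ is slice in $W$, the zero surgery homeomorphism $\phi_L$ extends to a diffeomorphism $\Phi_L : X_0(K_B) \# W \cong X_0(K_G) \# W$. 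Gluing $E(D)$ to $-\big(X_0(K_G)\#W\big)$ along $\phi_L$ is therefore the same, up to diffeomorphism, as gluing $E(D)$ to $-\big(X_0(K_B)\#W\big)$ along the identity of $S^3_0(K_B)$ — i.e. as $E(D) \cup_{\mathrm{id}} -\big(X_0(K_B)\#W\big)$. But $E(D) \cup_{\mathrm{id}} -X_0(K_B)$ is literally the closed-up original manifold: $E(D)$ is $X^\circ$ with $\nu(D)$ removed, and regluing the trace $X_0(K_B)$ (equivalently $\nu(D) \cup B^4$) along $S^3_0(K_B)$ recovers $X$. Hence $E(D) \cup_{\mathrm{id}} -\big(X_0(K_B)\#W\big) \cong X \# -W$, and chasing back through the identifications yields $X' \# -W \cong X \# -W$.

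The step I expect to require the most care is the bookkeeping around orientations and the placement of the connect-sum summand — making sure that $-\big(X_0(K_G)\#W\big) = -X_0(K_G) \# -W$, that the gluing boundary $S^3_0(K_G)$ is identified correctly with $\partial\big(X_0(K_G)\#W\big)$ (the connect-sum is performed in the interior, so it does not touch the boundary), and that $\Phi_L$ restricts to $\phi_L$ on that boundary so the glued manifolds genuinely agree. Once the gluing data is lined up, the diffeomorphism is formal. For the final conclusion I would note that $H$-sliceness is a diffeomorphism invariant (the disk and its null-homologous condition transport along the diffeomorphism), so $K_G$ $H$-slice in $X'$ and hence in $X' \# -W \cong X \# -W$.
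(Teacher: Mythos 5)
Your proposal is correct and is exactly the intended argument: the paper leaves this corollary as immediate (stated with \qed) from Lemma \ref{lem:stable_trace_diffeo} and the trace embedding lemma, and your unpacking of $X' \# -W$ as $E(D) \cup_{\phi_L} -\bigl(X_0(K_G) \# W\bigr)$, replacing the gluing via $\Phi_L$ to get $E(D) \cup_{\mathrm{id}} -\bigl(X_0(K_B) \# W\bigr) \cong X \# -W$, is precisely that reasoning. The orientation and boundary bookkeeping you flag is handled correctly, so nothing is missing.
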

Now we are ready to prove our first theorem.
\begin{thm}
\label{main_thm}
The knots $K_1, \dots, K_{23}$ are not slice nor are they $H$-slice in any $\nCP$.
\end{thm}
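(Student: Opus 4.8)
The plan is to push each of $K_1,\dots,K_{23}$ through the stable trace machinery of Corollary~\ref{cor:HtpyX_stable_diff} and then obstruct $H$-sliceness with the $s$-invariant bound of Lemma~\ref{lem:HSlice_s_inv}. Recall that each pair $(K_i,K_i')$ is produced by Manolescu and Piccirillo from a special RBG-link $L_i=(R_i,r_i)\cup(B_i,0)\cup(G_i,0)$, and that precisely because these pairs underlie Theorem~1.4 of \cite{zero_surg_exotic} one knows $s(K_i')<0$. Since $B$ and $G$ enter the definition of a special RBG-link symmetrically, after relabelling we may assume $K_i=K_B$ and $K_i'=K_G$, so that $\phi_{L_i}\colon S^3_0(K_i)\to S^3_0(K_i')$ is the homeomorphism of Proposition~\ref{prop:special_RBG_homeo}.

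The one geometric input I need is that for each $i$ the framed knot $(R_i,r_i)$ is slice in some $\# j_i\CPbar$; by Corollary~\ref{cor:atleast_PF} this amounts to the inequality $r_i\ge\PF_+(R_i)$. I would check this directly from the diagrams of the $L_i$: a sequence of positive crossing changes — each realized as a positive full twist through a two-strand disk — unknots $R_i$, and tracking how the framing changes under Lemma~\ref{lem:frame_CP_cob} produces an explicit framed slice disk for $(R_i,r_i)$ in a connected sum of copies of $\CPbar$. Carrying out this bookkeeping for all $23$ links is the step I expect to be the main obstacle, since it is the only part of the argument that is not formal; everything afterwards is a clean invocation of the lemmas already set up.

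Granting that input, fix $i$ and suppose toward a contradiction that $K_i$ is $H$-slice in some $\nCP$. Apply Corollary~\ref{cor:HtpyX_stable_diff} with $X=\nCP$ and $W=\# j_i\CPbar$: since $(R_i,r_i)$ is slice in $W$, the knot $K_i'=K_G$ is $H$-slice in $X\# -W$, which is diffeomorphic to $\#(n+j_i)\CP$. Lemma~\ref{lem:HSlice_s_inv} then forces $s(K_i')\ge 0$, contradicting $s(K_i')<0$. Hence none of $K_1,\dots,K_{23}$ is $H$-slice in any $\nCP$; taking $n=0$, so that $\#0\CP=S^4$, shows in particular that none of them is slice. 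This is the asserted statement, and it is the specialization to these $23$ concrete links of the more uniform result recorded later as Theorem~\ref{thm:MP_family_fullversion}.
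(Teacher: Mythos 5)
Your skeleton is exactly the paper's argument: assume $K_i$ is $H$-slice in some $\nCP$, feed the special RBG-link through Corollary~\ref{cor:HtpyX_stable_diff} with $W$ a connected sum of copies of $\CPbar$ in which $(R_i,r_i)$ is slice, conclude $K_i'$ is $H$-slice in a larger $\#m\CP$, and contradict $s(K_i')<0$ via Lemma~\ref{lem:HSlice_s_inv}, with the slice case being $n=0$. The only place you diverge is the step you single out as ``the main obstacle,'' and there the proposal is both heavier than necessary and not actually carried out: you propose to establish $r_i\ge\PF_+(R_i)$ by unknotting each $R_i$ with positive crossing changes and tracking framings through Lemma~\ref{lem:frame_CP_cob} across all $23$ diagrams. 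In fact no such bookkeeping is needed. All of the Manolescu--Piccirillo pairs come from their family $\MPfam$, and inspecting their data (Figure~13 and Table~1 of \cite{zero_surg_exotic}) shows every $L_i$ has $R=U$ and $r=a+b\ge 0$; since $\PF_+(U)=0$, Corollary~\ref{cor:atleast_PF} (or simply $r$ negative blow-ups, as in Remark~\ref{rem:blowup}) gives that $(U,r)$ is slice in $\rCPbar$ with $W=\rCPbar$, and the rest of your argument runs verbatim, giving $K_i'$ $H$-slice in $\#(n+r)\CP$ against $s(K_i')=-2$.

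Two cautionary remarks on the route you sketched, in case you try to push it beyond these examples. First, the crossing-change construction only bounds $\PF_+(R_i)$ from above by whatever framing the twisting happens to produce, and you would then need that upper bound (not just $\PF_+(R_i)$ itself) to lie below the given $r_i$; for a general $(R,r)$ with $r<\PF_+(R)$ this input is simply false, which is exactly why the paper has to work much harder (Section~\ref{sec:signs}) when $r<0$. Second, the sign matters: you need $(R_i,r_i)$ slice in a \emph{negative} definite piece $\# j_i\CPbar$ so that $X\#(-W)$ is again a $\#m\CP$; sliceness in copies of $\CP$ would send you to $\nCP\#j\CPbar$, where Lemma~\ref{lem:HSlice_s_inv} gives no contradiction. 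Here both points are moot precisely because $R=U$ and $r\ge 0$ for all $23$ links.
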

\begin{proof}
By looking at Figure $13$ and Table $1$ of \cite{zero_surg_exotic}, we see that each $K_i$ arise from a special RBG-link where $R = U$ and $r = a + b \ge 0$.
We have $(R,r) = (U,r)$ is slice in $W = \rCPbar$ and if $K_i$ is $H$-slice in $\nCP$, then $K_i'$ is $H$-slice in $\#(n+r) \CP$ by Corollary \ref{cor:HtpyX_stable_diff}.
These knots have $s(K_i') = -2$ which contradicts Lemma \ref{lem:HSlice_s_inv} and therefore $K_i$ could not have been $H$-slice in $\nCP$ in the first place.
\end{proof}
\begin{remark}
\label{rem:blowup}
For the knots in question, the diffeomorphism $\Phi_L: X_0(K_G) \rCPbar \cong X_0(K_B) \rCPbar$ can also be seen diagrammatically.
Take $L$ and perform negative blow ups to $(R,r)$ turning it into a zero framed unknot $(R,0)$ with $r$ meridians $(R_1,-1), \dots, (R_r,-1)$.
This new framed link still describes the same zero surgery homeomorphism.
Slam dunk $(R,0)$ with $(B,0)$ or $(G,0)$ and blow down $(R_1,-1), \dots, (R_r,-1)$ to get $(K_G,0)$ or $(K_B,0)$ respectively (think of this as an RBG-link generalized to have $R$ be a framed link).
Form a Kirby diagram by putting a dot on $R$ and attaching $2$-handles to the other components.
Now cancelling the dotted $(R,0)$ with $(B,0)$ or $(G,0)$ and sliding $(R_1,-1), \dots, (R_r,-1)$ away results in $X_0(K_G) \rCPbar$ or $X_0(K_B) \rCPbar$ respectively.
\end{remark}
We have dispensed of the main question relatively quickly and have shown that $K_1, \dots, K_{23}$ are not slice or $H$-slice in any $\nCP$.
However, to prove this we needed that the associated special RBG-links all had $r \ge 0$.
If we had negative $r < 0$ instead, then $(R,r)$ would be slice in $\# |r| \CP$.
If $K$ was $H$-slice in $\nCP$, then $K'$ would be $H$-slice in $\nCP \# |r| \CPbar$ by Corollary \ref{cor:HtpyX_stable_diff}.
This would not contradict $s(K')<0$ and our proof would not work.
It seems quite mysterious that we had this necessary condition on $r$ for all $23$ pairs of knots.
Fortunately, we can explain this and do so in the following subsection.
Before we proceed, we take a short detour to provide a generalization of Corollary \ref{cor:HtpyX_stable_diff} from special RBG-links to arbitrary zero surgery homeomorphisms.
One can safely skip to the next subsection, but this method may offer some useful benefits in practice.
\begin{lem}
\label{lem:arb_zero_surg_hslice}
Let $\zsg$ be a zero surgery homeomorphism and represent $\phi^{-1}(\mu_{K'},0) \subset S^3_0(K)$ as some framed knot $(m,k)$ in $S^3$. 
If $K$ is $H$-slice in $X$ and $(m,k)$ is slice in some $4$-manifold $W$, then $K'$ is $H$-slice in $X \# -W$.
\end{lem}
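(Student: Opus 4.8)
The plan is to mimic the proof of Lemma~\ref{lem:stable_trace_diffeo} and Corollary~\ref{cor:HtpyX_stable_diff}, but replacing the RBG/slam-dunk bookkeeping with a direct cut-and-paste along the zero surgery homeomorphism. First I would invoke the $H$-slice trace embedding lemma (Lemma~\ref{lem:Hslice_TEL}): since $K$ is $H$-slice in $X$, there is a smoothly embedded $-X_0(K) \hookrightarrow X$ inducing the zero map on $H_2$, with complement $E(D) = X^\circ - \nu(D)$ having $\partial E(D) = S^3_0(K)$. Now form $X'' = E(D) \cup_{\phi} -X_0(K')$; by the argument reproduced in Section~\ref{sec:MP_background} (Lemma~$3.3$ of \cite{zero_surg_exotic}), $K'$ is $H$-slice in $X''$, and when $X$ is simply connected $X''$ is homeomorphic to $X$. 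The point, however, is to identify $X''$ \emph{smoothly} after a connected sum.

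The key observation is that inside $S^3_0(K)$, the curve $\phi^{-1}(\mu_{K'})$ with its surgery framing is exactly the attaching region of the $2$-handle of $-X_0(K')$ as seen from the $E(D)$ side: filling $S^3_0(K)$ by the $2$-handle along $\phi^{-1}(\mu_{K'},0)$ recovers $-X_0(K')$ glued via $\phi$, because a $2$-handle along a meridian of $K'$ with $0$-framing caps $S^3_0(K')$ back off to $S^3$ and undoes the surgery. So the plan is: by hypothesis $(m,k)$ is slice in $W$, hence by the framed trace embedding lemma (Lemma~\ref{lem:framed_TEL}) $-X_k(m)$ embeds in $W$; equivalently $W^\circ$ is obtained from $E(D)$-type pieces by handle attachment along $(m,k)$. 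More precisely, I would run the \emph{reverse} of the Lemma~\ref{lem:stable_trace_diffeo} argument: attach a $2$-handle to $E(D)$ along $(m,k) = \phi^{-1}(\mu_{K'},0)$ to get $X''$ on one hand, while on the other hand, the framed slice disk for $(m,k)$ in $W$ exhibits $E(D) \cup_{(m,k)} 2h$ — no wait, I need to be careful. The correct statement is that $E(D) \cup -X_0(K') = X''$, and separately, because $(m,k)$ bounds a framed slice disk in $W$, the complement $W^\circ - \nu(\text{disk})$ caps $E(D)$ appropriately; chasing the two boundary identifications of $S^3_k(m)$ shows $X'' \# -W \cong X \# -W$, just as in Corollary~\ref{cor:HtpyX_stable_diff}. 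Then $K'$, being $H$-slice in $X''$, is $H$-slice in $X'' \# -W \cong X \# -W$, since connected-summing does not change whether the embedded $-X_0(K')$ has the zero-$H_2$ property (the extra $-W$ summand is disjoint and the disk stays null-homologous).

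The main obstacle is the bookkeeping in the gluing: one must verify carefully that attaching a $0$-framed $2$-handle to $E(D)$ along $\phi^{-1}(\mu_{K'})$ genuinely yields $-X_0(K')$ glued by $\phi$ — this hinges on the identity $S^3_0(K') \cup_{(\mu_{K'},0)} 2h = S^3$ together with the fact that $-X_0(K')$ itself is $(S^3 \times I) \cup (\text{$2$-handle})$ built so that removing that $2$-handle's cocore leaves a collar — and that the framing $k$ assigned to $m$ by the surgery description matches the framing for which $(m,k)$ is assumed slice in $W$. One must also track that the self-intersection accounting works out so no spurious $\pm\CP$ summands appear beyond the stated $-W$. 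Once these identifications are pinned down, the homological statement ($[D']=0$) is automatic and the conclusion follows.
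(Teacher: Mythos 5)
Your argument ultimately rests on the claim that ``chasing the two boundary identifications of $S^3_k(m)$ shows $X'' \# -W \cong X \# -W$, just as in Corollary~\ref{cor:HtpyX_stable_diff}.'' That step is not justified, and it is exactly what is unavailable at this level of generality. The stable diffeomorphism in Corollary~\ref{cor:HtpyX_stable_diff} comes from Lemma~\ref{lem:stable_trace_diffeo}, whose proof uses the full special RBG-link structure: both $B$ and $G$ are isotopic to meridians of $R$, so the manifold obtained from the slice disk exterior of $(R,r)$ in $W$ by attaching $2$-handles along $B$ and $G$ can be cancelled in two symmetric ways, producing a diffeomorphism $X_0(K_B)\# W \cong X_0(K_G)\# W$ that extends $\phi_L$, and only then does $X'\# -W \cong X\# -W$ follow. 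In the present lemma you are given a single framed curve $(m,k)=\phi^{-1}(\mu_{K'},0)$ slice in $W$, with no analogue of that symmetric structure, and no trace diffeomorphism (stable or otherwise) is produced; the paper explicitly notes right after this lemma that, unlike Corollary~\ref{cor:HtpyX_stable_diff}, one does \emph{not} obtain a stable diffeomorphism $X'\#-W\cong X\#-W$ here. You flag the problematic identification yourself (``no wait, I need to be careful'') but never resolve it, so the conclusion of your proposal is unsupported.

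The repair is to give up on a diffeomorphism and settle for an embedding, which is all that Lemma~\ref{lem:Hslice_TEL} requires; you already have every ingredient. Turning the handle decomposition of $X_0(K')$ upside down gives $(X'')^\circ = E(D)\cup_{(m,k)}2h$, and since $K'$ is $H$-slice in $X''$, Lemma~\ref{lem:Hslice_TEL} gives $-X_0(K')\subset (X'')^\circ$. The inclusion $E(D)\subset X^\circ$ induces $E(D)\cup_{(m,k)}2h \subset X^\circ\cup_{(m,k)}2h = X\# X_k(m)$, and the framed trace embedding lemma (Lemma~\ref{lem:framed_TEL}) applied to $(m,k)$ slice in $W$ gives $-X_k(m)\subset W$, i.e.\ $X_k(m)\subset -W$, hence $X\# X_k(m)\subset X\# -W$. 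Composing these inclusions yields an embedding of $-X_0(K')$ into $X\# -W$, which one checks is null-homologous on second homology, so $K'$ is $H$-slice in $X\# -W$ by Lemma~\ref{lem:Hslice_TEL}. This chain of embeddings delivers the stated conclusion without any claim about the smooth structure of $X''\# -W$.
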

\begin{proof}
By turning the handle decomposition of $X_0(K')$ upside down, one can obtain $X'$ from $E(D)$ by attaching a $2$-handle to $(m,k)$ and capping off with a $4$-handle.
We can regard $(X')^\circ$ as $X'$ with a $4$-handle deleted and $(X')^\circ = E(D)\cup_{(m,k)} 2h$.
This can be used to build the following sequence of inclusions:
\[-X_0(K') \subset (X')^\circ = E(D)\cup_{(m,k)} 2h \subset X^\circ \cup_{(m,k)} 2h = X \# X_k(m) \subset X \# -W\]
The first inclusion comes from the $H$-slice trace embedding lemma and the second inclusion is induced by $E(D) \subset X^\circ$.
As we observed in the proof of Lemma \ref{lem:stable_trace_diffeo}, $X^\circ \cup_{(m,k)} 2h$ is simply $X \# X_k(m)$.
The final inclusion comes from the framed trace embedding lemma with $(m,k)$ slice in $W$.
The composition is a nullhomologous trace embedding of $-X_0(K')$ into $X \# -W$ and therefore $K'$ is $H$-slice in $X \# -W$.
\end{proof}
For a special RBG-link homeomorphism, one can take $(m,k)$ to be $(R,r)$.
This recovers the conclusion of Corollary \ref{cor:HtpyX_stable_diff} and gives another proof of Theorem \ref{main_thm}.
However, we do not get a stable diffeomorphism $X' \# -W \cong X \# -W$ like in Corollary \ref{cor:HtpyX_stable_diff}.
The advantage of this is that it does not need a special RBG-link and has the further benefit that the input data is more malleable.
It seems non-trivial to find special RBG-links representing a given zero surgery homeomorphism with a different $(R,r)$.
However, $(m,k)$ was a choice of diagrammatic representative of $\phi^{-1}(\mu_{K'},0) \subset S^3_0(K)$ and can be modified via slides over $K$.
\subsection{The Manolescu-Piccirillo Family}
\label{sec:signs}
This section is devoted to explaining why $r$ was positive for all of Manolescu and Piccirillo's knot pairs.
We observed after proving Theorem \ref{main_thm} that we needed the relevant special RBG-link to have $r \ge 0$ for our proof to work.
Having positive $r \ge 0$ for all $23$ knot pairs seems unlikely and it would be quite unsatisfactory if a necessary hypothesis in our proof was left unexplained.
We investigate this by widening our view and considering the infinite six parameter family of special RBG-links $\MPfam$ that these knot pairs arose from.
These $\MPfam$ were constructed by Manolescu and Piccirillo as a source of candidates for their exotic $\nCP$ construction.
We prove that this would always occurs for any member $\MPfam$ of the Manolescu-Piccirillo family: if $r < 0$, then $s(K),s(K') \ge 0$.
This allows us to prove a strong generalization of Theorem \ref{main_thm} from $K_1,\dots,K_{23}$ to any $(K,K')$ coming from some $\MPfam$.

To show that $s(K),s(K')$ are non-negative when $r <0$, we construct and analyze slice disks for $(K,-1)$ and $(K',-1)$ in $\# |r| \CP$.
We build these slice disks by exploiting two properties of the special RBG-links $L(a,b,c,d,e,f)$.
The first was that they had $R=U$ and the second was that they were \textit{small}.
\begin{mydef}
A small RBG-link is a special RBG-link $L$ such that
\begin{itemize}[itemsep=0em]
  \item $B$ bounds a properly embedded disk $\Delta_B$ that intersects $R$ in exactly one point, and intersects $G$ in at most $2$ points.
  \item $G$ bounds a properly embedded disk $\Delta_G$ that intersects $R$ in exactly one point, and intersects $B$ in at most $2$ points.
\end{itemize}
Equivalently, one needs at most two slides of $B$ and $G$ over $R$ in the special RBG-link construction for $L$.
\end{mydef}
Some small RBG-links with $R=U$ will not to be useful to construct an exotic $\nCP$.
If either of the intersection numbers $\Delta_B \cap G$ or $\Delta_G \cap B$ is strictly less than $2$, then $K_B = K_G$ by Proposition $4.11$ of \cite{zero_surg_exotic}.
If $R = U$ and $r \ge 0$, then the proof of Theorem \ref{main_thm} can be applied.
What remains can be dealt with via the following lemma.
\begin{lem}
\label{lem:small_RU_minusoneframing}
Suppose $L$ is a small RBG-link with $R=U$ and $r < 0$.
Then both $K$ and $K'$ are $(-1)$-slice in $\# |r| \CP$ with slice disks $D,D' \subset (\# |r| \CP)^\circ$ that intersects one of the exceptional spheres $\CPone$ geometrically in $3$ points and the remaining $|r|-1$ exceptional spheres $\mathbb{CP}^1$ nullhomologously.
\end{lem}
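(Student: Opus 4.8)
The plan is to build the slice disks $D$ for $K=K_B$ (and symmetrically $D'$ for $K_G$) directly from a diagram of the small RBG-link $L$, tracking how the framing evolves under the slam dunk and the blow-downs. Recall from the special RBG construction that $K_B$ is obtained from $B$ by isotoping $G$ into meridianal position, sliding $B$ over $R$ to clear the meridianal disk $\Delta_G$, and then deleting $R$ and $G$. Since $L$ is small, the disk $\Delta_B$ meets $R$ once and meets $G$ in at most two points; after the reductions that are needed to avoid the trivial case $K_B=K_G$, we are in the situation where exactly two slides of $B$ over $R$ are required. The key observation is that because $R=U$ is an unknot, we may first apply the blow-up procedure of Remark \ref{rem:blowup}: replace $(R,r)$ (with $r<0$) by a zero-framed unknot $(R,0)$ together with $|r|$ meridians $(R_1,-1),\dots,(R_{|r|},-1)$. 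This does not change the associated knots or the zero surgery homeomorphism, and now each slide of $B$ over the original $R$ becomes a slide over the zero-framed unknot $R$ that threads through the chain of $(-1)$-framed meridians.

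Next I would carry out the slam dunk of $(R,0)$ with $(G,0)$ to produce $(K_B,0)$, keeping the $|r|$ exceptional meridians in the picture; the result presents $S^3$ as the boundary of $(\# |r|\CP)^\circ$ with $K_B$ sitting in it, and I want a disk bounded by $K_B$. Here the two slides of $B$ over $R$ are each realized geometrically as $B$ being pushed across one of the exceptional spheres. Concretely, each slide of $B$ over the zero-framed $R$, when $R$ has a $(-1)$-framed meridian threading it, can be converted — via Lemma \ref{lem:frame_CP_cob} applied in reverse, i.e. a blow-down of a $(-1)$-framed unknot — into a full positive twist of $B$ through a disk $\Delta$; the exterior $\CPone$ then meets the resulting disk in a controlled number of points. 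Since $B$ bounds $\Delta_B$ in $S^3$ meeting $R$ exactly once, and the two slides are the only obstruction, one slide uses up one exceptional sphere (the one that $D$ meets geometrically in $3$ points, the extra intersections coming from the $\pm$ pair created by turning a crossing change into a twist together with the single intersection of $\Delta_B$ with $R$), and the framing picks up $\ell^2$ where $\ell$ is the relevant intersection number; a careful sign count using $r<0$ and $H_1(S^3_{r,0,0})=\Z$ must then yield that the induced framing on $K_B$ is exactly $-1$, with the remaining $|r|-1$ exceptional spheres met nullhomologously. The same argument applied to $\Delta_G$ produces $D'$ for $K_G=K'$.

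I expect the main obstacle to be the framing and homology bookkeeping: showing that the construction lands on precisely the $(-1)$-framing (not some other negative framing) and that all but one of the exceptional $\CPone$'s are intersected nullhomologously. This requires pinning down exactly which intersection points of $\Delta_B$ with $G$ and with $R$ survive the slides, how each contributes to $[D]\cdot[D]$, and how the smallness hypothesis (at most two slides, $\Delta_B\cap R$ a single point) forces the total homological self-intersection of $D$ to be $+1$ against a single generator and $0$ against the rest. The diagrammatic calculus of Remark \ref{rem:blowup} together with Lemma \ref{lem:frame_CP_cob} gives all the tools; the work is in organizing the two cases ($\Delta_G\cap B$ equal to $2$, and the reduced cases) and verifying the geometric intersection count of $3$ is sharp. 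Once the disks are constructed, the downstream conclusion $s(K),s(K')\ge 0$ will follow by feeding $(-1)$-sliceness into the adjunction-type input (Lemma \ref{lem:HSlice_s_inv} applied after a blow-down, or Conjecture-free once one notes $(-1)$-sliceness in $\#|r|\CP$ plus a blow-down gives $H$-sliceness in $\#(|r|-1)\CP \# \text{(something)}$ — but that step is for the next lemma, not this one).
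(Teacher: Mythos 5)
There is a genuine gap. First, a sign problem at the start: Remark \ref{rem:blowup} converts $(R,r)$ with $r\ge 0$ into a zero-framed unknot by \emph{negative} blow-ups, producing $(-1)$-framed meridians and $\CPbar$ summands. With $r<0$ you would need \emph{positive} blow-ups, i.e.\ $(+1)$-framed meridians giving $\CP$ summands; sliding $R$ over $(-1)$-framed meridians moves its framing the wrong way and your ambient manifold comes out as a connected sum of $\CPbar$'s rather than the required $\#|r|\CP$. More seriously, the substance of the lemma is exactly the part you defer: you write that ``a careful sign count \dots must then yield that the induced framing on $K_B$ is exactly $-1$'' and that the framing and homology bookkeeping is ``the main obstacle,'' but that bookkeeping, together with the precise geometric intersection pattern (one $\CPone$ met in exactly $3$ points, the other $|r|-1$ met nullhomologously), \emph{is} the lemma; it cannot be left as an expected outcome, because Corollary \ref{cor:small_RU_sign_of_s} consumes exactly this intersection data to run the $F_{2,1}(1)$ cobordism argument. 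Your heuristic for the count of $3$ (a $\pm$ pair from a crossing-change twist plus the single point $\Delta_B\cap R$) does not correspond to a worked-out mechanism and mislocates which sphere carries the triple intersection.

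For comparison, the paper proves this by induction on $r$. The hard base case $r=-1$ does not use blow-ups of $R$ at all: one forms the framed link $(R,-1)\cup(K_B,-1)$, views $\CPbar-\inter(B^4\sqcup B^4)$ as $S^3\times I$ with a $2$-handle on $(R,-1)$, takes the annulus $E=K_B\times I$, and slides it over that $2$-handle three times (reversing the two slam-dunk slides and then unhooking $B$ from $R$) so that the free end becomes a $0$-framed unknot capping off in $B^4$; turning everything upside down gives the disk in $\CP$, and the three slides are precisely the three intersections with the capped-off cocore, which is the $\CPone$ met in $3$ points. The inductive step uses the linking-matrix fact that $\ell=0$ when $r<-1$, so the two strands through the twist box are oppositely oriented and the extra twist is nullhomologous; Lemma \ref{lem:frame_CP_cob} then adds one $\CP$ whose exceptional sphere is met nullhomologously without changing the $-1$ framing. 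Your proposal contains neither the induction, nor the $\ell=0$ observation that makes the remaining spheres nullhomologous, nor a derivation of the $-1$ framing, so as it stands it does not establish the statement. (A trace-embedding shortcut in the spirit of Lemma \ref{lem:proj_framing_from_surgery} also would not suffice here, precisely because it discards the intersection data this lemma must record.)
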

\begin{proof}
We will prove this in the case that the intersection numbers $\Delta_B \cap G$ and $\Delta_G \cap B$ are both precisely $2$.
Otherwise $K_B=K_G$ and will not be of interest.
The lemma remains true in that case and can be proved in a similar way.
We will prove the lemma by induction on the framing $r$ with base case $r = -1$ and we induct by showing the $r < -1$ case of the lemma follows from the $r + 1$ case.
This is one of those peculiar induction problems where the base case is the hard part so we will save it for last.

\begin{figure}
\centering

\subfloat[]{{
   \fontsize{10pt}{12pt}\selectfont
   \def\svgwidth{1.5in}
   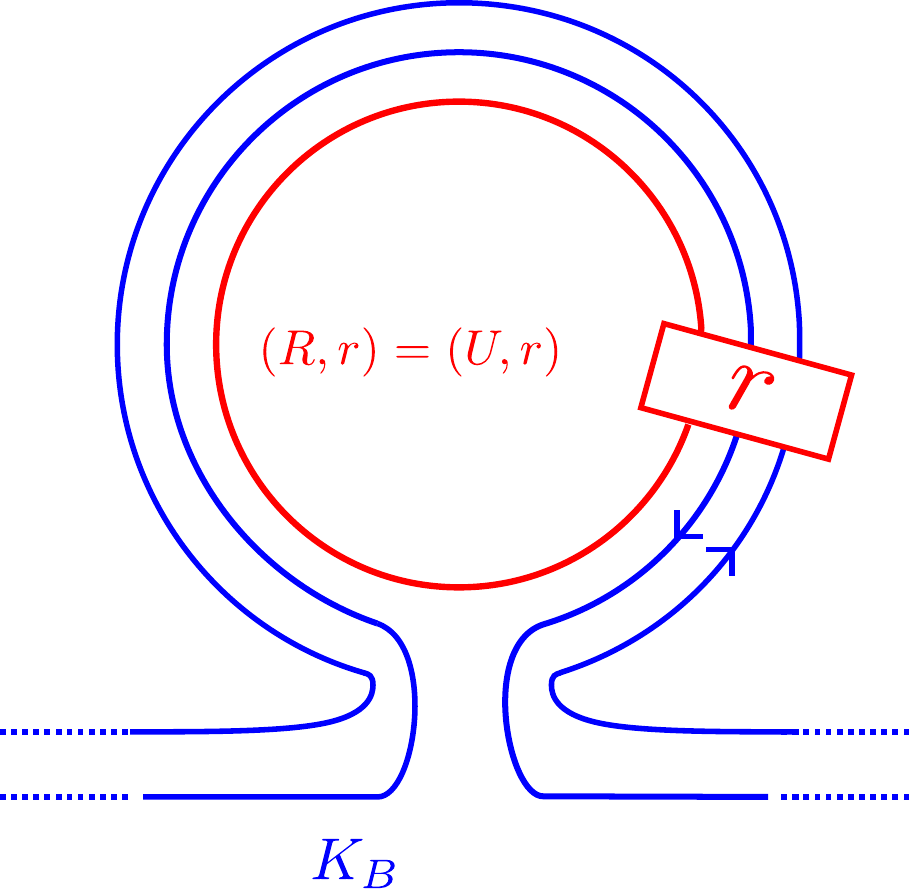
}
\label{fig:smallRBG_1}
} \ \
\subfloat[]{{
   \fontsize{10pt}{12pt}\selectfont
   \def\svgwidth{1.5in}
   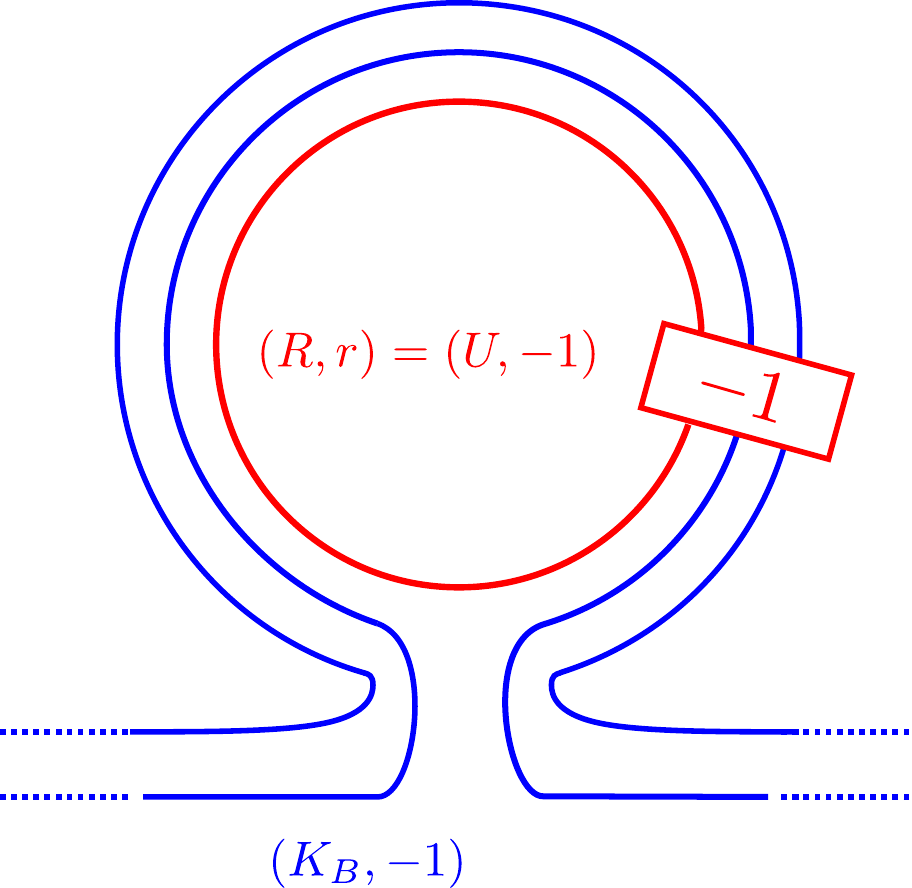
}
\label{fig:smallRBG_2}
} \ \
\subfloat[]{{
   \fontsize{10pt}{12pt}\selectfont
   \def\svgwidth{0.8in}
   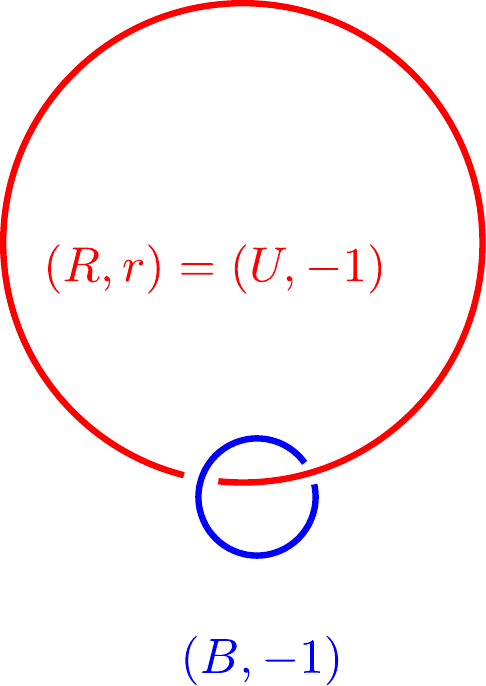
   \vspace{.20in}
}
\label{fig:smallRBG_3}
}
\caption{Constructing a slice disk for $(K_B,-1)$ in $\# |r| \CP$}
\label{fig:smallRBG_all}
\end{figure}
To prove the inductive step, let $L$ be a small RBG-link with $R = U$ and $r < -1$.
According to a linking matrix calculation at the start of Section $4.1$ of \cite{zero_surg_exotic}, $B$ and $G$ in a special RBG-link have linking number $\ell = 0$ or have $r \ell = 2$.
Since $\ell$ is just $\Delta_B \cap G = \Delta_G \cap B = 2$ counted with sign, we must have $\ell = 0$ when $r < -1$.
This $\ell$ also counts the number of slides in the slam dunk with sign.
Therefore the two strands of $B$ we slid in the slam dunk are running in opposite directions along $R$ and through the $r$ twist box as shown in Figure \ref{fig:smallRBG_1}.
Let $L^*$ be the special RBG-link obtained from $L$ by increasing $r$ to $r+1$.
The knot $K_B^*$ arising from $L^*$ differs from $K_B$ by having an $r+1$ twist box in Figure \ref{fig:smallRBG_1}.
Therefore, $K_B$ can be obtained from $K_B^*$ by a negative nullhomologous twist through the two strands running through the twist box.
If $(K_B^*,-1)$ bounds the disk $D^*$ in $\# |r+1| \CP$ intersecting a $\CPone$ in $3$ points, then we can construct such a disk $D$ for $(K_B,-1)$ as in Lemma \ref{lem:frame_CP_cob}.
The disk $D$ in $\# |r| \CP$ is simply $D^*$ after attaching a $(+1)$-framed $2$-handle to $(\# |r+1| \CP)^\circ$ along an unknot surrounding the twist box of $\partial D^* = K_B^*$.

To prove the $r=-1$ base case, let $(R,r) \cup (K_B,-1)$ be the framed link depicted in Figure \ref{fig:smallRBG_2}.
This link is obtained from $L = (R,r) \cup (B,0) \cup (G,0)$ by sliding $(B,0)$ over $(R,r)$ to get $(K_B,0)$, removing $(G,0)$, and decreasing the framing of $K_B$ to $-1$.
Identify $\CPbar -\inter (B^4 \sqcup B^4)$ as $S^3 \times I$ with a $2$-handle attached to $S^3 \times 1$ along $(R,r) = (U,-1)$.
Use this to define the cobordism $E$ in $\CPbar$ as $K_B \times I$ under this identification.
Next we slide $E$ over the $2$-handle attached to $(R,r)$ and we will represent this by slides of $(K_B,-1)$ in $(R,r) \cup (K_B,-1)$.
Starting with Figure \ref{fig:smallRBG_2}, reverse the two slides from the slam dunk to get $(R,r) \cup (B,-1)$ as in Figure \ref{fig:smallRBG_3}.
We can slide $(B,-1)$ off $(R,r)$ turning it into a zero framed unknot $(U,0)$ disjoint from $(R,r)$.
We turn $E$ upside down and cap off $(U,0)$ with a disk in $B^4$ to get a slice disk $D$ for $(K_B,-1)$ in $\CP$.

It remains to check that $D$ intersects $\CPone$ in three points.
Observe that $E$ was slid three times over the $2$-handle attached to $(R,r)$ and now $E$ has three intersections with the cocore $C$ of this $2$-handle.
$\partial C = \mu_R$ is disjoint from the $(U,0)$ boundary component of $E$.
After turning $E$ and $C$ upside down, they can be capped off without adding new intersections.
The capped off $C$ is a copy of $\CPone$ that intersects $D$ in three points.
\end{proof}
We have a $(-1)$-framed slice disk in $\# |r| \CP$ when $r < 0$ and Conjecture \ref{conj:PF_is_1_bound} would immediately tell us that $s(K') \ge 0$.
However, that conjecture is currently unconfirmed and this disk is non-trivial in homology, so we can't appeal to Lemma \ref{lem:HSlice_s_inv}.
Despite this, the approach used in \cite{gener_s_inv} to prove Lemma \ref{lem:HSlice_s_inv} will be insightful.
Let $\Sigma$ be a smooth, properly embedded, nullhomologous surface in $(\nCPbar)^\circ$ with $\partial \Sigma = K$.
First remove neighborhoods $\nu(\CPonebar) = (\CPbar)^\circ$ tubed together leaving $S^3 \times I$.
By taking these neighborhoods $\nu(\CPonebar)$ small enough, one can ensure that $\Sigma$ meets $\partial \nu(\CPonebar)$ in some link $F_{p,p}(1)$.
What remains of $\Sigma$ is a cobordism $C$ in $S^3 \times I$ from a disjoint union of $F_{p,p}(1)$ to $K$.
Now one needs to complete the difficult task of calculating $s(F_{p,p}(1))$ for this infinite family.
Once this is done, we apply our understanding of the $s$-invariant for a cobordism $C$ in $S^3 \times I$ and get constraints on $s(K)$.
By keeping careful track of the intersection data in Lemma \ref{lem:small_RU_minusoneframing}, we will be able to proceed in a similar manner.
\begin{cor}
\label{cor:small_RU_sign_of_s}
Suppose $L$ is a small RBG-link with $R=U$ and $r < 0$.
Then $s(K),s(K') \ge 0$.
\end{cor}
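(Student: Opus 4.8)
The plan is to feed the $(-1)$-slice disks furnished by Lemma~\ref{lem:small_RU_minusoneframing} into the cobordism inequality of Lemma~\ref{lem:HSlice_s_inv}, in exactly the spirit in which \cite{gener_s_inv} proves Lemma~\ref{lem:HSlice_s_inv} itself, thereby reducing the statement to the computation of the Rasmussen invariant of a single small link.

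First I would record the homological shape of the disks. Since $r<0$, Lemma~\ref{lem:small_RU_minusoneframing} gives a disk $D\subset(\# |r| \CP)^\circ$ with $\partial D=K$, of framing $-1$, meeting one exceptional sphere $\CPone$ geometrically in $3$ points and the remaining $|r|-1$ exceptional spheres nullhomologously. The framing being $-1$ forces $[D]\cdot[D]=1$; since the coefficient of $e_i$ in $[D]$ is the algebraic intersection number of $D$ with the $i$-th exceptional sphere, $[D]$ is $\pm$ the class of the distinguished sphere, and hence the three intersection points with that sphere carry signs $+,+,-$ (after orienting $D$ so that $[D]=e_1$).

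Next I would excise the distinguished exceptional sphere. Choosing a small tubular neighborhood $\nu\cong(\CP)^\circ$ of it, $D\cap\nu$ consists of three disjoint fiber disks, so $D\cap\partial\nu$ is the three-component link $L$ made of three fibers of the Euler-number-$+1$ Hopf fibration $\partial\nu=S^3\to S^2$, two oriented as fibers and one oriented oppositely. Because the distinguished sphere is a standard exceptional sphere, $(\# |r| \CP)^\circ\setminus\nu$ is diffeomorphic to $\# (|r|-1) \CP-(\inter(B^4)\sqcup\inter(B^4))$, so $C:=D\setminus(D\cap\nu)$ is a properly embedded surface there which is a cobordism from $L$ to $K$ with $\chi(C)=\chi(D)-3=-2$. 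It is nullhomologous: the coefficient of $e_i$ in $[C]$ for $i\ge2$ is the algebraic intersection of $D$ with the $i$-th exceptional sphere, which vanishes, while the class of the excised sphere has been killed. Since $C$ is connected (a disk with three open disks removed), its single component meets $L$, so Lemma~\ref{lem:HSlice_s_inv}, applied with $n=|r|-1$, with $L$ as the incoming link and $K$ as the outgoing link, gives
\[ s(K)\ \ge\ s(L)+\chi(C)\ =\ s(L)-2. \]
The disk $D'$ for $K'$ produced by Lemma~\ref{lem:small_RU_minusoneframing} has the same intersection data and hence yields the same link $L$, so likewise $s(K')\ge s(L)-2$.

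Thus the corollary reduces to showing $s(L)\ge 2$, where $L$ is two parallel positive Hopf fibers together with one reverse-oriented parallel fiber; this is the main obstacle. It is the direct analogue of the computation of $s(F_{p,p}(1))$ that \cite{gener_s_inv} carries out while proving Lemma~\ref{lem:HSlice_s_inv}, and I would handle it the same way: realize an elementary cobordism in $S^3\times I$ (for instance a band move merging the two parallel fibers) from $L$ to a torus link whose $s$-invariant is known, and combine this with the cobordism behaviour of $s$ to pin $s(L)$ down. Once $s(L)\ge2$ is established, the displayed inequalities give $s(K),s(K')\ge0$ (and, since $s(K),s(K')$ are even, it would in fact be enough to know $s(L)\ge1$). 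A secondary point needing care is verifying that the excision can be performed so that $D$ meets $\partial\nu$ in precisely this fiber link and that the resulting $C$ genuinely satisfies the hypotheses of Lemma~\ref{lem:HSlice_s_inv}.
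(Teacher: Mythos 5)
Your overall architecture is the same as the paper's: take the $(-1)$-framed disk of Lemma~\ref{lem:small_RU_minusoneframing}, excise a small neighborhood of the distinguished exceptional sphere, view what remains as a nullhomologous cobordism $C$ with $\chi(C)=-2$ from a three-component link to $K$ inside $\#(|r|-1)\CP$ minus two balls, and apply Lemma~\ref{lem:HSlice_s_inv}. However, the way you close the argument has a genuine problem: the chirality of the boundary link is wrong. The distinguished sphere has square $+1$, so $\nu(\CPone)$ is a $(+1)$-framed $2$-handle on an unknot; the three circles where $D$ meets $\partial\nu(\CPone)$ are parallel belt spheres, and blowing down the $(+1)$-framed unknot puts a \emph{negative} full twist on them. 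Moreover, the orientation of $S^3$ in which the incoming link must be read off (the reverse of the boundary orientation induced from the cobordism, i.e. the orientation of $\partial\nu(\CPone)$ as boundary of $\nu(\CPone)$) confirms this: the incoming link is the mirror $-F_{2,1}(1)$, three parallel unknots with a negative full twist and one strand reversed, not the positively twisted Hopf-fiber link you describe. This is not a cosmetic point: the computation in \cite{gener_s_inv} (recorded just before their Proposition 9.9) gives $s(F_{2,1}(1))=-2$, so the inequality $s(L)\ge 2$ that your reduction requires is \emph{false} for the positively twisted link you identified; it is the mirror that has $s=+2$ and makes $s(K)\ge s(-F_{2,1}(1))+\chi(C)=0$ go through. (Compare the quoted MMSW argument, where excising neighborhoods of $-1$-spheres in $\nCPbar$ produces the positively twisted links $F_{p,p}(1)$; in $\nCP$ the picture mirrors.)

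Second, you leave the evaluation of $s$ on the boundary link as the ``main obstacle,'' to be handled by a band move to a torus link; a single cobordism only bounds $s$ from one side, and this is exactly the computation that \cite{gener_s_inv} has already done. The paper closes the argument simply by citing $s(F_{2,1}(1))=-2$ (together with the mirror convention) rather than recomputing it. So: same route as the paper, but with a sign error that makes your final step unprovable as written, and with the key $s$-value left open instead of quoted from \cite{gener_s_inv}.
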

\begin{proof}
Let $D \subset ( \# |r| \CP)^\circ$ be the slice disk for $(K,-1)$ from Lemma \ref{lem:small_RU_minusoneframing} that intersects an exceptional sphere $\CPone$ in three points.
Delete a suitably small neighborhood $\nu(\CPone)$ of this $\CPone$ such that $D$ meets $\partial \nu(\CPone)$ in the link $-F_{2,1}(1)$.
The link $F_{2,1}(1)$ is obtained by adding a positive twist through three parallel unknots with one oriented in the opposite direction of the other two (see section $9.2$ of \cite{gener_s_inv} for details).
What remains of $D$ is a nullhomologous cobordism $C$ in $\# (|r| -1) \CP$ from $-F_{2,1}(1)$ to $K$ and we can apply Lemma \ref{lem:HSlice_s_inv}.
\[s(K) - s(-F_{2,1}(1)) \ge \chi(C) =-2\]
Tucked away before Proposition $9.9$ of \cite{gener_s_inv}, they note $s(F_{2,1}(1)) = -2$ and therefore $s(K) \ge 0$.
\end{proof}
This guarantees the proof of Theorem \ref{main_thm} will be viable for every special RBG-link $L(a,b,c,d,e,f)$ and allows us to prove the following theorem.
\begin{thm}
Let $L$ be a small RBG-link with $R = U$ and associated zero surgery homeomorphism $\phi:S^3_0(K) \rightarrow S^3_0(K')$.
\begin{enumerate}
  \item If $K$ is $H$-slice in some $\nCP$, then $s(K') \ge 0$.
  \item If $K$ is $H$-slice in some $\nCPbar$, then $s(K') \le 0$.
  \item If $K$ is slice (or more generally, biprojectively $H$-slice), then $s(K')=0$.
\end{enumerate}
In particular, this applies to any special RBG-link $\MPfam$ from the Manolescu-Piccirillo family.
\label{thm:small_RBG}
\end{thm}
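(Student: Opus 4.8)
The plan is to reduce everything to part~(1) and to prove part~(1) by a case split on the sign of the framing~$r$. Throughout I write $K=K_B$ and $K'=K_G$; interchanging the roles of $B$ and $G$ yields another small RBG-link with the same $R=U$ and the same $r$, so each assertion below is automatically symmetric in $K$ and $K'$.

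\emph{Part~(1).} If $r\ge 0$, then $(R,r)=(U,r)$ is slice in $W=\rCPbar$, exactly as in the proof of Theorem~\ref{main_thm}. Corollary~\ref{cor:HtpyX_stable_diff} with $X=\nCP$ then shows that $H$-sliceness of $K$ in $\nCP$ forces $K'$ to be $H$-slice in $X \# -W=\#(n+r)\CP$, whence $s(K')\ge 0$ by Lemma~\ref{lem:HSlice_s_inv}. If instead $r<0$, then $s(K')\ge 0$ holds with no hypothesis on $K$ whatsoever: this is exactly Corollary~\ref{cor:small_RU_sign_of_s}, built on the explicit $(-1)$-framed disk in $\# |r| \CP$ of Lemma~\ref{lem:small_RU_minusoneframing}. (If $\Delta_B\cap G$ or $\Delta_G\cap B$ fails to equal $2$ one has $K=K'$, and the conclusion is immediate from Lemma~\ref{lem:HSlice_s_inv}; this case is already folded into the statements of Lemma~\ref{lem:small_RU_minusoneframing} and Corollary~\ref{cor:small_RU_sign_of_s}.) In all cases part~(1) holds for every small RBG-link with $R=U$, regardless of the value of $r$.

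\emph{Parts~(2) and~(3), and the final sentence.} Let $m(L)$ be the mirror of $L$, obtained by reversing every crossing. Then $m(L)$ is again a small RBG-link with $R=U$: the geometric intersection conditions defining ``small'' and the isotopies $R\cup G\cong R\cup\mu_R\cong R\cup B$ are manifestly mirror-invariant, and the only change is that all framings negate, so the $R$-framing of $m(L)$ is $-r$. Since the slam-dunk construction commutes with mirroring, the pair associated to $m(L)$ is $(m(K),m(K'))$. Now $K$ is $H$-slice in $X$ iff $m(K)$ is $H$-slice in $-X$, and $-\nCP=\nCPbar$, while $s(m(K'))=-s(K')$; so part~(1) applied to $m(L)$ --- whose $R$-framing $-r$ is of one sign or the other and hence covered above --- is precisely part~(2) for $L$. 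Finally, if $K$ is biprojectively $H$-slice, then by Lemma~\ref{lem:proj_Hslice} it is $H$-slice in some $\nCP$ and in some $\nCPbar$; parts~(1) and~(2) then give $s(K')\ge 0$ and $s(K')\le 0$, forcing $s(K')=0$, and slice knots are biprojectively $H$-slice, being $H$-slice in $\#0\CP=S^4=\#0\CPbar$. The closing assertion requires nothing new: every $\MPfam$ in the Manolescu--Piccirillo family is by construction a small RBG-link with $R=U$ \cite{zero_surg_exotic}, so the theorem applies to all of them, and this in particular proves Theorem~\ref{thm:MP_family_fullversion}.

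Granting the earlier results, this argument is almost entirely formal manipulation with the framed trace-embedding lemmas and orientation reversal, and the only step needing genuine care is the mirror reduction: one must check that mirroring preserves \emph{all} of the defining data of a small RBG-link and that it carries the two slam-dunk constructions for $L$ to those for $m(L)$. All of the substantive geometry --- the construction of the $(-1)$-framed disks, the identification of the part of such a disk near $\CPone$ with the link $-F_{2,1}(1)$, and the value $s(F_{2,1}(1))=-2$ from \cite{gener_s_inv} --- lives entirely inside Lemma~\ref{lem:small_RU_minusoneframing} and Corollary~\ref{cor:small_RU_sign_of_s}, which we are free to invoke.
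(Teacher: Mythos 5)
Your proposal is correct and follows essentially the same route as the paper: reduce (2) and (3) to (1) via orientation reversal/mirroring, and prove (1) by splitting on the sign of $r$, using Corollary~\ref{cor:small_RU_sign_of_s} when $r<0$ and the argument of Theorem~\ref{main_thm} (Corollary~\ref{cor:HtpyX_stable_diff} plus Lemma~\ref{lem:HSlice_s_inv}) when $r\ge 0$. The only difference is that you spell out the mirror reduction that the paper leaves implicit in the phrase ``the first two statements are equivalent.''
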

\begin{proof}
It suffices to prove the first statement because the first two statements are equivalent and combining them gives the last statement.
If $r < 0$, then $s(K') \ge 0$ by Corollary \ref{cor:small_RU_sign_of_s} and for the remaining $r \ge 0$, we proceed as in the proof of Theorem \ref{main_thm}.
\end{proof}
This means that Manolescu and Piccirillo's $\MPfam$ are not suitable for finding an exotic $\nCP$ using the $s$-invariant.
Let us emphasize that this still leaves open that some $\MPfam$ could be used to construct an exotic $\nCP$.
We could still have $K$ $H$-slice in some $\nCP$ while $K'$ is not.
The above theorem shows that $s(K')$ can not obstruct $H$-sliceness of $K'$ in $\nCP$ and detect exoticness.
It would be interesting if one could show that this does not occur and generalize Theorem \ref{thm:small_RBG} on the level $H$-sliceness in $\nCP$ without reference to a particular concordance invariant.
\begin{prob}
Let $\zsg$ be a zero surgery homeomorphism arising from some $L(a,b,c,d,e,f)$.
Show that $K$ is $H$-slice in some $\nCP$ if and only if $K'$ is $H$-slice in some (ideally the same) $\nCP$.
\end{prob}
\subsection{Generalizing to Other Special RBG-links}
\label{sec:eff_range}
Now that we've ruled out using $\MPfam$ with the $s$-invariant to construct an exotic $\nCP$, we are naturally led to consider other zero surgery homeomorphisms.
One would not want to run into the same issues as $\MPfam$, so we will explain how Theorem \ref{thm:small_RBG} can be generalized to other special RBG-links.
We hope that in doing so that future attempts to construct an exotic $\nCP$ can avoid having our techniques be applicable.
For the remainder of this subsection, let $(K,K')$ be a pair of knots coming from a special RBG-link $L = (R,r) \cup (B,0) \cup (G,0)$.
Our goal is to find conditions on $(R,r)$ that allow us to use $H$-sliceness of $K$ in $\nCP$ to infer properties of $s(K')$.

Key to Theorem \ref{thm:small_RBG} was to prove Corollary \ref{cor:small_RU_sign_of_s} to get control over $s(K')$ when $r < 0$.
This was done by analyzing particular slice disks in $\# |r| \CP$ constructed in Lemma \ref{lem:small_RU_minusoneframing}.
Corollary \ref{cor:HtpyX_stable_diff} and Lemma \ref{lem:arb_zero_surg_hslice} of Subsection \ref{sec:obHslice} produce nullhomologous trace embeddings using a zero surgery homeomorphism given a slice condition on $(R,r)$.
The following is in the same spirit, but now we construct a \textit{framed trace embedding}.
\begin{lem}
\label{lem:proj_framing_from_surgery}
If $(R,r+1)$ is slice in some closed $4$-manifold $W$, then $K$ and $K'$ are $(-1)$-slice in $W \# \CP$
\end{lem}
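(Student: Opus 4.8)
The plan is to run the dotted-circle / slam-dunk construction from the proof of Lemma \ref{lem:stable_trace_diffeo}, but with the hypothesized slice disk for $(R,r+1)$ in $W$ playing the role of the dotted circle on $R$, and with a single blow-up inserted to reconcile the framing $r+1$ with the framing $r$ that $R$ carries in the RBG-link $L$. That blow-up is exactly what will produce the $\CP$-summand and what pushes the resulting framing on $K$ and $K'$ down from $0$ to $-1$; morally it is the role played by the $(-1)$-framed meridian in Lemma \ref{lem:frame_CP_cob}, read with the opposite sign.

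Concretely: first I would rewrite $L = (R,r)\cup(B,0)\cup(G,0)$ as the generalized RBG-link $(R,r+1)\cup(e,+1)\cup(B,0)\cup(G,0)$, where $e$ is a meridian of $R$ chosen disjoint from, and with zero linking number with, $B$ and $G$. Blowing down the $(+1)$-framed unknot $e$ restores $(R,r)$ without disturbing $B$ or $G$, so this presents the same zero surgery homeomorphism $\phi_L$ (the move of Remark \ref{rem:blowup}). Next, letting $D_R\subset W^\circ$ be a slice disk realizing the framing $r+1$ on $R$, I would build a Kirby picture of $W\#\CP$ by dotting $R$ (deleting $\nu(D_R)$ from $W^\circ$), attaching a $(+1)$-framed $2$-handle along $e$, and attaching $0$-framed $2$-handles along $B$ and $G$; since $e$ is an unknot with zero linking on $B$ and $G$, this $(+1)$-framed handle splits off a $\CP$-summand. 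Cancelling the dotted $R$ against the $(G,0)$-handle is a slam dunk which refills $W^\circ$ — because $(R,r+1)\cup(G,0)$ is a slam-dunk pair — and leaves the $(B,0)$-handle attached along $\psi_B(B)$. Bookkeeping the framing through the slam dunk together with the blow-down of $e$ (through whose spanning disk $B$, hence $\psi_B(B)$, runs exactly once, so the blow-down subtracts $1$), and using the homology normalization of the generalized link to pin down the remaining ambiguity, identifies this handle as attached along $(K,-1)$. The upshot is a nullhomologous framed trace embedding $-X_{-1}(K)\hookrightarrow W\#\CP$, which by the framed trace embedding lemma (Lemma \ref{lem:framed_TEL}) says precisely that $K$ is $(-1)$-slice in $W\#\CP$; running the symmetric version of the same picture, cancelling the dotted $R$ against the $(B,0)$-handle instead, handles $K'$.

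The hard part will be the framing and linking bookkeeping in the last step: one must simultaneously track how the framing on $\psi_B(B)$ is affected by the slam-dunk slides, by the homological constraint $H_1(S^3_0(K))=\Z$, and by the blow-down of $e$, and check that these combine to give exactly $-1$ rather than $0$ or something $r$-dependent — in particular one must verify that changing $R$'s framing from $r$ to $r+1$ inside the \emph{dotted} picture does not change the knot $\psi_B(B)$ (sliding over a dotted circle carries no framing correction), so that $K$ really is the same knot produced by $L$. One also has to confirm the orientation of the blown-up summand — that it is $\CP$ and not $\CPbar$ — since this is what makes the conclusion land on $W\#\CP$ and feed correctly into the companion estimate on $s(K')$ via Lemma \ref{lem:HSlice_s_inv} applied to the resulting $(-1)$-framed disk. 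Finally, the degenerate case in which $K=K'$ (when $\Delta_B\cap G$ or $\Delta_G\cap B$ is fewer than two points) should be dealt with by the same construction, just as in the proof of Lemma \ref{lem:small_RU_minusoneframing}.
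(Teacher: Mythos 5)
There is a genuine gap, and it sits exactly where you flagged ``the hard part'': the framing bookkeeping does not produce $-1$, and in fact cannot, because you attach the $B$- and $G$-handles with framing $0$. Trace through your own construction: after cancelling $E(D_R)$ against the $(G,0)$-handle, the image of $B$ is obtained by sliding $B$ over $R$ once for each intersection point of $B$ with $\Delta_G$, and each such slide drags one strand through the spanning disk of $e$. So the slid curve meets $\Delta_e$ geometrically $|\Delta_G \cap B|$ times (two, for the interesting links) and algebraically $\ell = lk(B,G)$ times --- not ``exactly once.'' Blowing down $(e,+1)$ therefore changes the framing by $-\ell^2$ (which is $0$ in the common $\ell=0$ case), and the $-1$ twist it inserts exactly compensates for having used $(r+1)$-framed rather than $r$-framed pushoffs of $R$ in the slides; the net output is the framed knot $(K,0)$, so your $Z'$ is just $X_0(K)\# W \# \CP$, i.e.\ Lemma \ref{lem:stable_trace_diffeo} together with the blow-up trick of Remark \ref{rem:blowup}, and no $(-1)$-framed trace embedding appears. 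Your appeal to the homology normalization in fact cuts the other way: $\partial Z' \cong S^3_0(K)$ has $H_1 = \Z$, which pins the framing of the image of a $0$-framed $B$-handle to $0$, never $-1$. (Two smaller inaccuracies feed this: the $(e,+1)$-handle does not split off a $\CP$-summand while $R$ is ``dotted,'' since $e$ is a meridian of $R$; and in the generalized dotted picture the boundary identification is the slam dunk of the surgery presentation, in which slides over $R$ do use its coefficient $r+1$, so the knot type is a priori changed and is only restored by the blow-down twist.)

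The $-1$ has to be put in by hand as input data, which is what the paper does. It forms the abstract $2$-handlebody $Z$ on the framed link $(R,r)\cup(K_B,-1)$ --- which visibly contains $X_{-1}(K_B)$ --- then reverses the slam-dunk slides to get $(R,r)\cup(B,-1)$ and slides $(R,r)$ off the now $(-1)$-framed $B$; it is precisely the $-1$ that makes this slide raise the framing of $R$ to $r+1$ and split off a $(U,-1)$, so $Z \cong X_{r+1}(R)\#\CPbar$. Two applications of the framed trace embedding lemma (Lemma \ref{lem:framed_TEL}) finish: $(R,r+1)$ slice in $W$ embeds $-X_{r+1}(R)$ in $W$, hence $-Z \cong -X_{r+1}(R)\#\CP$ embeds in $W\#\CP$, and $-X_{-1}(K_B)\subset -Z$ gives $(K_B,-1)$ slice in $W\#\CP$; symmetrically for $K_G$. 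If you want to salvage your version, you must start with the handle on $B$ framed $-1$ rather than $0$ (and then the auxiliary $+1$-framed meridian $e$ is not needed at all), at which point you have essentially reconstructed the paper's argument. As a side remark, even granting a $(-1)$-framed disk, Lemma \ref{lem:HSlice_s_inv} does not apply to it because it is not nullhomologous; that is why the paper invokes Conjecture \ref{conj:PF_is_1_bound} (or the careful intersection tracking of Lemma \ref{lem:small_RU_minusoneframing}) downstream of this lemma.
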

\begin{proof}
Let $(R,r) \cup (K_B,-1)$ be the framed link obtained from $L = (R,r) \cup (B,0) \cup (G,0)$ by sliding $(B,0)$ over $(R,r)$ to turn it into $(K_B,0)$, removing $(G,0)$, and decreasing the framing of $K_B$ to $-1$.
Take this link to be a Kirby diagram of a $4$-manifold $Z$ and note that $X_{-1}(K_B)$ clearly embeds in $Z$.
Reverse the slides of the slam dunk turns $(R,r) \cup (K_B,-1)$ into $(R,r) \cup (B,-1)$.
Then slide $(R,r)$ off $(B,-1)$ to get $(R,r+1) \sqcup (U,-1)$.
These slides induce a diffeomorphism of $Z$ with $ X_{r+1}(R) \# \CPbar$.
If $(R,r+1)$ is slice in $W$, then $-X_{r+1}(R)$ embeds in $W$ by the framed trace embedding lemma.
Then $-Z = (-X_{r+1}(R)) \# \CP$ embeds in $W \# \CP$ and so does $-X_{-1}(K_B) \subset -Z$, hence $(K_B,-1)$ is slice in $W \# \CP$.
\end{proof}
Observe that these are roughly the same slides used in the $r=-1$ case of Lemma \ref{lem:small_RU_minusoneframing} and these two proofs should be thought of as essentially the same.
The above proof is much simpler because we used trace embeddings instead of directly constructing the slice disk.
That was necessary in Lemma \ref{lem:small_RU_minusoneframing} because we had to keep careful track of intersection data to avoid Conjecture \ref{conj:PF_is_1_bound}.
That conjecture asserted that if a knot $K$ is $(-1)$-slice in some $\nCP$, then $s(K) \ge 0$.
Now we will just assume Conjecture \ref{conj:PF_is_1_bound} and apply it to the $(-1)$-slicing of $K$ in $\# (n+1) \CP$ given by the above lemma with $W= \nCP$.
Here we'll state this in terms of the projective slice framing $\PF_-(R)$ from Subsection \ref{sec:proj_slice_framing}.
If $r < \PF_-(R)$, then $r+1 \le \PF_-(R)$ and $(R,r+1)$ is slice in some $\nCP$ according to Corollary \ref{cor:atleast_PF}.
\begin{cor}
If $r < \PF_-(R)$, then $K$ and $K'$ are both $(-1)$-slice in some $\nCP$.
If Conjecture \ref{conj:PF_is_1_bound} is true, then $s(K),s(K') \ge 0$.
\qed
\label{cor:PF_0_or_1}
\end{cor}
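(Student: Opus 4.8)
The plan is to deduce Corollary \ref{cor:PF_0_or_1} directly from Lemma \ref{lem:proj_framing_from_surgery}, Corollary \ref{cor:atleast_PF}, and Conjecture \ref{conj:PF_is_1_bound}. First I would observe that the hypothesis $r < \PF_-(R)$ forces $r+1 \le \PF_-(R)$ since $r$ and $\PF_-(R)$ are integers. By Corollary \ref{cor:atleast_PF}, any framing at most $\PF_-(R)$ is realizable, so $(R, r+1)$ is slice in some $\nCP$; call this manifold $W = \# m\CP$. Feeding this $W$ into Lemma \ref{lem:proj_framing_from_surgery} yields that $K$ and $K'$ are both $(-1)$-slice in $W \# \CP = \#(m+1)\CP$, which establishes the first assertion.

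For the second assertion, I would simply apply Conjecture \ref{conj:PF_is_1_bound}: since $K$ is $(-1)$-slice in $\#(m+1)\CP$, the conjecture gives $s(K) \ge 0$, and identically $s(K') \ge 0$. The only subtlety worth a sentence is that Lemma \ref{lem:proj_framing_from_surgery} is stated for $K_B$ (i.e., for $K$), so one should note that the same argument run with the roles of $B$ and $G$ interchanged — using the symmetry built into the special RBG-link construction, where $R \cup B \cong R \cup \mu_R \cong R \cup G$ — produces the corresponding $(-1)$-slicing of $K_G = K'$; alternatively, the statement of Lemma \ref{lem:proj_framing_from_surgery} already asserts both $K$ and $K'$, so this is immediate.

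There is essentially no obstacle here: the corollary is a one-step packaging of the preceding lemma with the conjectural $s$-invariant bound, and the integrality observation $r < \PF_-(R) \Rightarrow r+1 \le \PF_-(R)$ is the only moving part. Accordingly I would keep the proof to the two or three lines already present, since the real content lives in Lemma \ref{lem:proj_framing_from_surgery} and in whether Conjecture \ref{conj:PF_is_1_bound} can eventually be verified.
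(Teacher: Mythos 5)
Your proposal is correct and matches the paper's argument exactly: the integrality step $r < \PF_-(R) \Rightarrow r+1 \le \PF_-(R)$, Corollary \ref{cor:atleast_PF} to slice $(R,r+1)$ in some $\nCP$, Lemma \ref{lem:proj_framing_from_surgery} with that $W$, and then Conjecture \ref{conj:PF_is_1_bound}. Your remark that the $K'$ case follows by the $B$/$G$ symmetry of the special RBG-link construction is also consistent with how the paper handles it.
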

We apply this Corollary \ref{cor:PF_0_or_1} with an arbitrary special RBG-link in the same way we used Corollary \ref{cor:small_RU_sign_of_s} for the Manolescu-Piccirillo $\MPfam$.
This proves the following which characterizes when our methods can be applied to a zero surgery homeomorphism.
\begin{thm}
Let $\phi: S^3_0(K) \rightarrow S^3_0(K')$ be a zero surgery homeomorphism arising from a special RBG-link $L = (R,r) \cup (B,0) \cup (G,0)$ and assume Conjecture \ref{conj:PF_is_1_bound} is true.
\begin{enumerate}[itemsep=0em]
    \item Suppose $r < \PF_-(R)$ or $r \ge \PF_+(R)$.
    If $K$ is $H$-slice in some $\nCP$, then $s(K') \ge 0$.
    \item Suppose $r \le \PF_-(R)$ or $r > \PF_+(R)$.
    If $K$ is $H$-slice in some $\nCPbar$, then $s(K') \le 0$.
    \item Suppose $r < \PF_-(R)$, $r > \PF_+(R)$, or $R$ is biprojectively $H$-slice with any $r \in \Z$.
    If $K$ is slice (or biprojectively $H$-slice), then $s(K') = 0$.
\end{enumerate}
Moreover, if $R$ is biprojectively $H$-slice, then the conditions on $(R,r)$ automatically hold.
\label{thm:full_adj_version}
\end{thm}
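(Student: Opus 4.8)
The plan is to prove part (1) for every special RBG-link, derive part (2) by applying part (1) to the mirror link, and then extract part (3) and the concluding sentence by a short case analysis on where $r$ sits relative to $\PF_-(R)$ and $\PF_+(R)$. The three earlier constructions do all the real work: Corollary \ref{cor:HtpyX_stable_diff} turns a slicing of $(R,r)$ in a positive-definite manifold into a trace embedding, while Corollary \ref{cor:PF_0_or_1} (built on Lemma \ref{lem:proj_framing_from_surgery}) turns a slicing of $(R,r+1)$ in some $\nCP$ into a $(-1)$-slicing of $K$ and $K'$.

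For part (1) I would split on the hypothesis. If $r \ge \PF_+(R)$, then $r \ge 0$ and $(R,r)$ is slice in $W = \rCPbar$ by Corollary \ref{cor:atleast_PF}; feeding this into Corollary \ref{cor:HtpyX_stable_diff} with $X = \nCP$ shows $H$-sliceness of $K = K_B$ in $\nCP$ forces $K' = K_G$ to be $H$-slice in $\nCP \# \rCP = \#(n+r)\CP$, whence $s(K') \ge 0$ by Lemma \ref{lem:HSlice_s_inv}; this case is unconditional and is exactly the argument of Theorem \ref{main_thm}. If instead $r < \PF_-(R)$, then Corollary \ref{cor:PF_0_or_1} yields $s(K') \ge 0$ directly (this is where Conjecture \ref{conj:PF_is_1_bound} enters), with no hypothesis on $K$ even needed. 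So the stated disjunction is precisely the condition under which one of the two mechanisms fires.

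For part (2) I would observe that the mirror link $\bar L = (\bar R, -r) \cup (\bar B, 0) \cup (\bar G, 0)$ is again a special RBG-link, with associated knots $K_{\bar B} = \overline{K_B}$ and $K_{\bar G} = \overline{K_G}$, and with $\PF_{\pm}(\bar R) = -\PF_{\mp}(R)$ (since $(\bar R, k)$ is slice in some $\nCPbar$ exactly when $(R, -k)$ is slice in some $\nCP$). Under this substitution the part-(2) hypothesis ``$r \le \PF_-(R)$ or $r > \PF_+(R)$'' for $L$ becomes exactly the part-(1) hypothesis ``$(-r) < \PF_-(\bar R)$ or $(-r) \ge \PF_+(\bar R)$'' for $\bar L$, the condition ``$K$ is $H$-slice in some $\nCPbar$'' becomes ``$\overline{K}$ is $H$-slice in some $\nCP$'', and ``$s(K_{\bar G}) \ge 0$'' reads $-s(K') \ge 0$, i.e.\ $s(K') \le 0$; so part (2) follows from part (1). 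For part (3), a slice (more generally biprojectively $H$-slice) knot $K$ is $H$-slice in some $\nCP$ and in some $\nCPbar$ simultaneously, so it suffices to impose the hypotheses of both (1) and (2); over $\Z$, and using $\PF_-(R) \le 0 \le \PF_+(R)$, a direct check shows that ``$r < \PF_-(R)$ or $r \ge \PF_+(R)$'' together with ``$r \le \PF_-(R)$ or $r > \PF_+(R)$'' holds exactly when $r < \PF_-(R)$, or $r > \PF_+(R)$, or $\PF_-(R) = \PF_+(R) = 0$, which by Lemma \ref{lem:proj_Hslice} is the case $R$ biprojectively $H$-slice with arbitrary $r$. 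Then $s(K') \ge 0$ and $s(K') \le 0$ give $s(K') = 0$, and the concluding ``moreover'' is immediate since $R$ biprojectively $H$-slice forces $\PF_{\pm}(R) = 0$, making all three conditions on $(R,r)$ tautologies in $r \in \Z$.

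I do not expect a genuinely hard step here; the substance is in Corollary \ref{cor:HtpyX_stable_diff}, Corollary \ref{cor:PF_0_or_1}, and Lemma \ref{lem:proj_framing_from_surgery}. The main obstacle is bookkeeping: checking that the mirror of a special RBG-link is a special RBG-link and that $\PF_\pm$ and the hypotheses transform as claimed, pinning down the boundary cases $r = \PF_\pm(R)$ so that the hypotheses of (1)--(3) come out exactly as stated, and being explicit that the $\PF_+$-side conclusions are unconditional (generalizing Theorem \ref{main_thm}) while only the $\PF_-$-side conclusions invoke Conjecture \ref{conj:PF_is_1_bound}.
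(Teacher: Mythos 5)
Your proposal is correct and takes essentially the same route as the paper: the $r \ge \PF_+(R)$ case via Corollary \ref{cor:atleast_PF}, Corollary \ref{cor:HtpyX_stable_diff}, and Lemma \ref{lem:HSlice_s_inv}, the $r < \PF_-(R)$ case via Corollary \ref{cor:PF_0_or_1} (the only place Conjecture \ref{conj:PF_is_1_bound} is needed), with statement (2) obtained by mirroring and statement (3) by intersecting the hypotheses. One minor slip worth fixing: for a general $R$, Corollary \ref{cor:atleast_PF} only gives that $(R,r)$ is slice in some $\# k \CPbar$ (not in $\rCPbar$ specifically, which is the $R=U$ case of Theorem \ref{main_thm}), so the conclusion should read that $K'$ is $H$-slice in $\# (n+k) \CP$; this does not affect $s(K') \ge 0$.
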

\begin{proof}
The $r < \PF_-(R)$ part of the first statement is Corollary \ref{cor:PF_0_or_1}.
If $r \ge \PF_+(R)$, then $(R,r)$ is slice in some $\# k \CPbar$ by Corollary \ref{cor:atleast_PF}.
If we also have $K$ is $H$-slice in some $\nCP$, then we can conclude $K'$ is $H$-slice in $\# (n+k) \CP$ by Corollary \ref{cor:HtpyX_stable_diff} and $s(K') \ge 0$ by Lemma \ref{lem:HSlice_s_inv}.
For the final statement, the first two statements simultaneously apply when $r < \PF_-(R)$ and $r > \PF_+(R)$.
If $R$ is biprojectively $H$-slice, then $\PF_+(R) = \PF_-(R) = 0$ by Lemma \ref{lem:proj_Hslice} and the first two statements simultaneously apply for any $r \in \Z$. 
\end{proof}
The special RBG-links $\MPfam$ all have biprojectively $H$-slice $R=U$ and this might explain why Manolescu and Piccirillo were unsuccessful in their search.
The appeal of the $s$-invariant for detecting an exotic $\nCP$ was that it was not clear if the obstruction should apply in an exotic $\nCP$.
This theorem sometime recovers this obstruction if we only understand the $s$-invariant in the standard $\nCP$.
Furthermore, this theorem could apply to other concordance invariants that shares the properties of the $s$-invariant we used (e.g. any that satisfies an adjunction inequality in $\nCP$).
This is troubling for the prospect of constructing an exotic $\nCP$ from zero surgery homeomorphisms.
Once we understand a concordance invariant in the standard $\nCP$, it can often be enough to rule out using it to construct an exotic $\nCP$ with zero surgery homeomorphisms.

However, this theorem does not immediately apply to all zero surgery homeomorphisms.
This leaves open the possibility that some zero surgery homeomorphism could be used to construct an exotic $\nCP$.
We will construct an infinite family of special RBG-links for which our methods do not apply.
Our special RBG-links will come in the form of a special RBG-link $L$ with a local connected sum by some knot $J$ to $R$.
Call this new special RBG-link $L[J]$ and the resulting knots $K_B[J],K_G[J]$.

Dunfield and Gong used topological slice obstructions to show that $K_6,\dots,K_{21}$ are not slice.
By viewing $K_B[J],K_G[J]$ as satellites knots, we get some control over the topological sliceness of the resulting knots.
Examining Figure \ref{fig:slamdunk_all}, we see that $K_B[J]$ and $K_G[J]$ are both satellites $P_B(J)$ and $P_G(J)$ of $J$.
These will be patterns $P_B$ and $P_G$ such that $K_B = P_B(U)$ and $K_G = P_G(U)$.
These patterns have winding number equal to the linking number $\ell$ of $B$ and $G$.
We will take $L$ to be one of the special RBG-links $L_i$ associated to the five pairs $\{(K_i,K_{i}')\}_{ i=1,\dots,5 }$.
Denote the knots resulting from $L_i[J]$ by $K_i[J]$ and $K_i'[J]$.
These $L_i$ all have $\ell = 0$ and so the associated satellite patterns have winding number zero.
$K_i[J]$ and $K_i'[J]$ will then have the same Alexander polynomials as $K_i$ and $K_i'$ by Seifert's formula for the Alexander polynomial of a satellite \cite{seifert_alex_poly}.
In particular, $K_i[J]$ and $K_i'[J]$ will have trivial Alexander polynomial and will be topologically slice by Freedman \cite{Freedman}.

Let $r_i$ denote the framing of $R$ in $L_i$ which will be equal to $1$, $2$, or $3$.
To construct an exotic $\nCP$ from $L_i[J]$, Theorem \ref{thm:full_adj_version} suggests we should have $J$ not biprojectively $H$-slice and have $\PF_-(J) \le r_i < \PF_+(J)$.
Note that the condition $\PF_-(J) \le r_i$ holds automatically so we only need to check that $\PF_+(J) > r_i$.
Such $J$ are in abundance as any $J$ with $\tau(J) \ge 1$, such as the right hand trefoil, will suffice due to Corollary \ref{cor:PF_bounds_tau}.

These $L_i[J]$ give an infinite family of special RBG-links which are not susceptible to topological slice obstructions or the methods of this paper.
One could potentially apply the methodology of Manolescu and Piccirillo to these families.
We do not propose these as a serious attempt at constructing an exotic $\nCP$.
It seems that going from $(K,K')$ to $(K[J],K'[J])$ would increase slice genus since the resulting knots are more complicated.
Instead, we propose these as a setting to study how to relate the $s$-invariants and $H$-sliceness of knots with homeomorphic zero surgeries.
\begin{prob}
Let $r_i < \PF_+(J)$, relate $H$-sliceness of $K_i[J],K_i'[J]$ in $\nCP$ and their $s$-invariants to each other.
In particular, show that if one of these knots is $H$-slice in some $\nCP$, then the other has non-negative $s$-invariant.
\end{prob}

\label{sec:annulus_twist_htpy_S4}
\section{Annulus Twist Homotopy Spheres}
Manolescu and Piccirillo constructed homotopy $4$-spheres $Z_n = E(D) \cup_{\phi_n} -X_0(J_n)$ from annulus twisting a ribbon knot $J_0$.
In this section, we show that these $Z_n$ are all standard by drawing them upside down as $-Z_n = X_0(J_n) \cup_{\phi_n} -E(D)$.
This proof was motivated by a desire to understand and visualize the trace embedding of $X_0(J_n)$ in $-Z_n$ as a Kirby diagram.
We will first need to explain how this works for a ribbon knot and its associated trace embedding into $S^4$.
\subsection{A Kirby Diagram of the Trace Embedding Lemma}
\label{sec:ribbon_kirby}
A ribbon disk is a smoothly, properly embedded disk $D$ in $B^4$ such that the height function on $B^4$ restricted to $D$ has no index two critical points.
A knot $K$ is called a ribbon knot if it bounds a ribbon disk.
Similarly, an $n$ component link $\mathcal{L}$ is called a ribbon link if it bounds a collection $\mathcal{D} = D_1 \cup \dots \cup D_n$ of $n$ disjoint ribbon disks called a ribbon disk link.
A ribbon disk is typically described by a ribbon diagram.
This is an $n$ component unlink $\mathcal{U}$ together with a collection of $n-1$ ribbon bands.
A ribbon band is an embedded $I \times I$ attached to $\mathcal{U}$ along $(\partial I) \times I$ (respecting orientations).
These ribbon bands $I \times I$ must intersect the disks that bound $\mathcal{U}$ as some $a \times I$.
The knot $6_1$ is ribbon and can be described by the ribbon diagram in Figure \ref{fig:Ribbon_diag}.
\begin{figure}
\centering
\subfloat[]{
    \def\svgwidth{2in}
   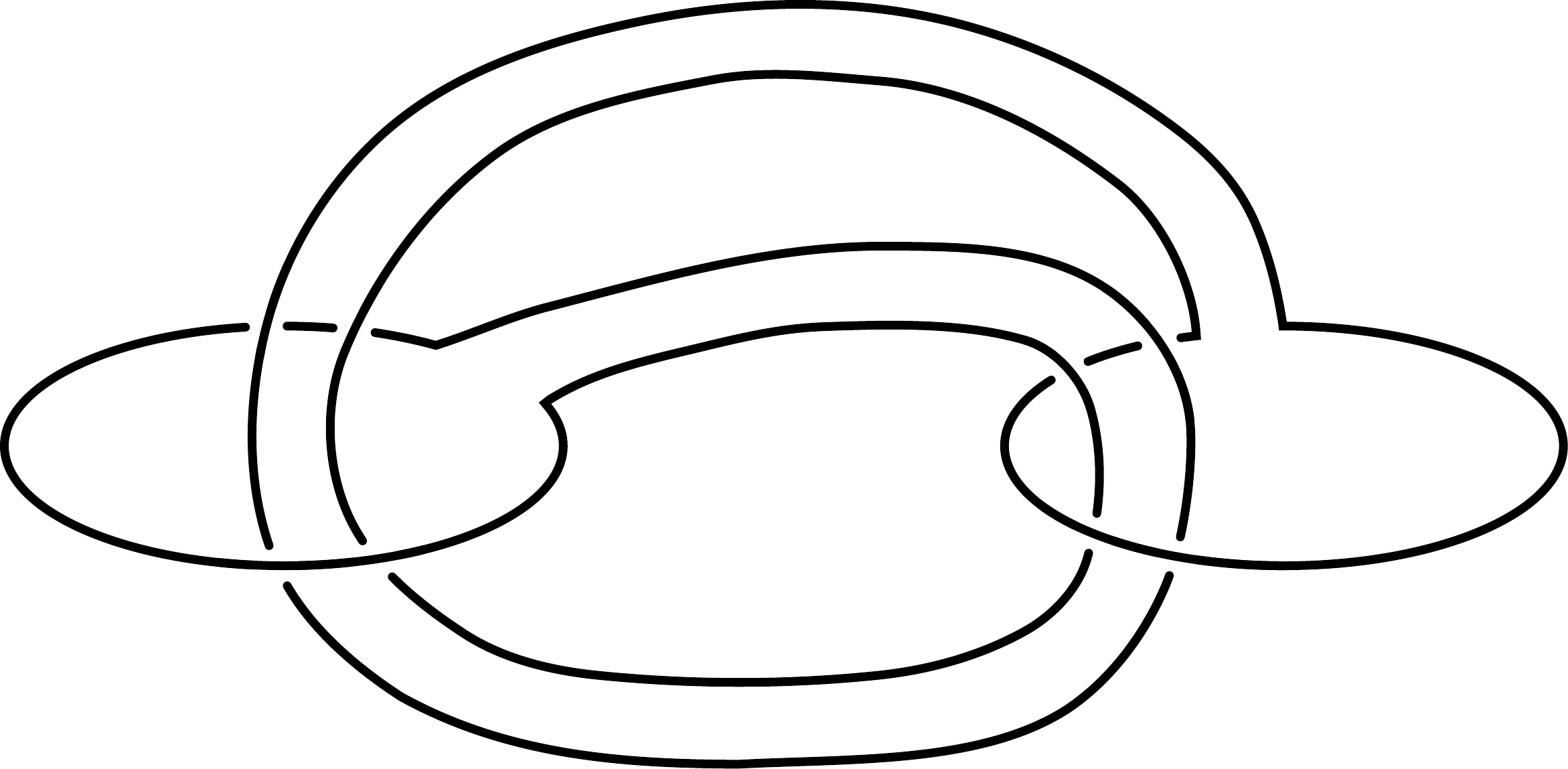
       \label{fig:Ribbon_diag}
}
\subfloat[]{
   \fontsize{10pt}{12pt}\selectfont
   \def\svgwidth{2in}
   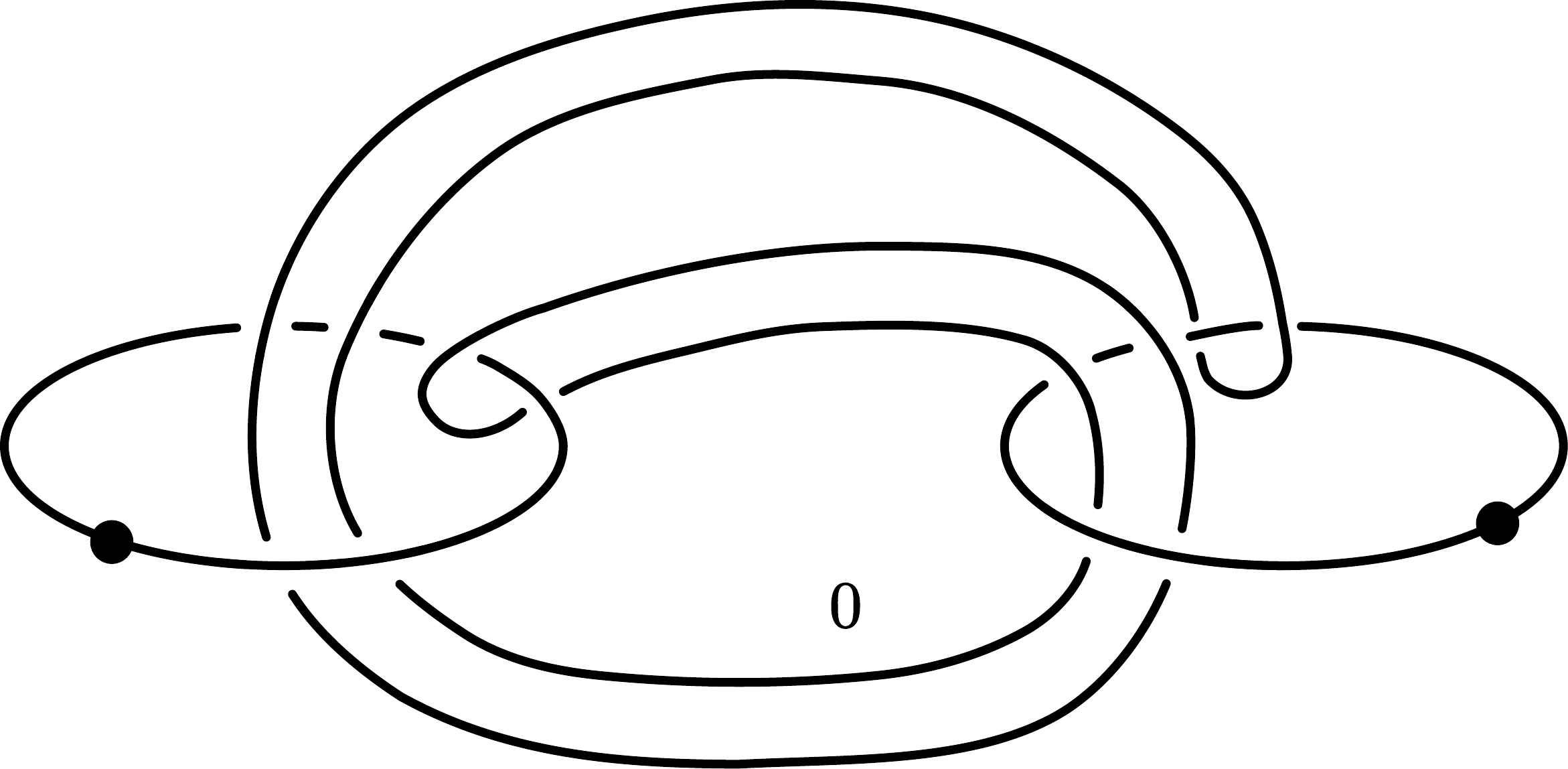
   \label{fig:Ribbon_Exterior_Kirby}
}

\label{fig:ribbon_def}
\caption{A ribbon diagram for $6_1$ and a Kirby diagram of the corresponding the ribbon disk exterior}
\end{figure}

Such a ribbon diagram for a knot $K$ defines a ribbon disk $D$ bounding $K$.
Each component of the unlink $\mathcal{U}$ becomes an index zero critical point of $D$ and each ribbon band becomes an index one critical point of $D$.
We can draw a Kirby diagram of the exterior $E(D) = B^4 - \nu(D)$ of $D$ from its Ribbon diagram with the algorithm presented in Section $6.2$ of \cite{4_Mfld_Kirby}.
Using Figure \ref{fig:Ribbon_diag}, we draw a Kirby diagram for the exterior of a ribbon disk of $6_1$ in Figure \ref{fig:Ribbon_Exterior_Kirby}.
The index zero critical points of $D$ become $1$-handles which we draw by putting a dot on each component of $\mathcal{U}$.
The index one critical points of $D$ become zero framed $2$-handles which follow the boundary of the corresponding ribbon like in Figure \ref{fig:Ribbon_Exterior_Kirby}.

To fill in $E(D)$ in this diagram, we attach a zero framed $2$-handle to a meridian of a dotted circle.
We can then cancel this pair, the rest of the diagram ``unravels'', and the remaining handles cancel.
This leaves $B^4$ and capping off with a $4$-handle gives $S^4$.
This gives a decomposition of $S^4$ as $E(D)$ with a $2$-handle and $4$-handle attached.
These additional handles represent an embedded $-X_0(K)$ and this is the same embedding as the classical trace embedding lemma.
\begin{lem}[Trace Embedding Lemma]
$K$ is slice if and only if $X_0(K)$ (equivalently $-X_0(K)$) embeds in $S^4$.
\end{lem}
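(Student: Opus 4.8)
The plan is to prove both implications by carrying out, for a general slice disk, the explicit handle decomposition of $S^4$ that Section~\ref{sec:ribbon_kirby} produces diagrammatically in the ribbon case, and then to run it backwards. For the forward implication, suppose $K$ bounds a smoothly, properly embedded disk $D\subset B^4$. First I would record that the framing $D$ induces on $K$ is $0$: the normal bundle $\nu(D)\cong D\times D^2$ is trivial and a parallel pushoff of $D$ is disjoint from $D$, so $[D]\cdot[D]=0$, and hence $\partial E(D)=S^3_0(K)$ for $E(D)=B^4\setminus\inter\nu(D)$. Writing $S^4=B^4\cup_{S^3}B^4$ and decomposing the first ball as $E(D)\cup\nu(D)$, one sees that $\nu(D)$ is a $2$-handle glued to $E(D)$ along the meridian $\mu_K\subset S^3_0(K)$ with the framing that reconstitutes $B^4$, and the second ball is a $4$-handle. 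Thus $S^4=E(D)\cup h^2\cup h^4$, where $h^2\cup h^4$ is attached along $\partial E(D)=S^3_0(K)$. But that piece — a collar on $S^3_0(K)$, a $2$-handle on the dual knot $\mu_K$, and a $4$-handle — is exactly the upside-down handle description of $X_0(K)$, carrying the boundary orientation $-S^3_0(K)$ that it inherits as the complement of $E(D)$; so it is $-X_0(K)$. This gives $-X_0(K)\hookrightarrow S^4$, and composing with an orientation-reversing self-diffeomorphism of $S^4$ yields $X_0(K)\hookrightarrow S^4$ as well, which also explains the ``equivalently'' in the statement.

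For the converse, suppose $-X_0(K)$ (equivalently $X_0(K)$) embeds in $S^4$, say $S^4=(-X_0(K))\cup_{S^3_0(K)}E$ with $E=\overline{S^4\setminus(-X_0(K))}$. I would turn the handle decomposition $-X_0(K)=h^0\cup h^2$ around: absorbing the collar into $E$, one gets $S^4=\big(E\cup_{\mu_K}h^2\big)\cup h^4=W\cup B^4$, where $W:=E\cup h^2$ has $\partial W=S^3=\partial h^0$, the knot $K$ (the attaching circle of the original $h^2$) sits on $\partial W$ with its given knotting, and $K$ bounds in $W$ the core of the dualized $2$-handle, a disk pushed into $W$. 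A van Kampen and Mayer–Vietoris computation, using $\pi_1(X_0(K))=1$ and $X_0(K)\simeq S^2$, shows $W$ is a homotopy $4$-ball. Once $W$ is identified with the standard $B^4$, that disk is a slice disk for $K$.

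The hard part is exactly that last identification: the reversed construction naturally produces a disk bounding $K$ in a homotopy $4$-ball $W$, and upgrading this to a genuine slice disk in $B^4$ is a Schoenflies-type point that is not formal in general. In the settings that actually matter here it is handled by explicit Kirby calculus — drawing $E$ (as in Figure~\ref{fig:Ribbon_Exterior_Kirby} for the ribbon case) together with the capping $2$- and $4$-handle and cancelling the diagram down to the empty one, which certifies $W\cong B^4$ on the nose. The remaining points require only careful bookkeeping: that the slice-disk framing is $0$, that the piece glued to $E(D)$ is the $0$-trace of $K$ itself rather than of a mirror or of some other framing, and that the orientations are tracked so that one lands on $-X_0(K)$ precisely; these are routine once organized.
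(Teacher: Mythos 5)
Your forward direction is fine and is essentially the paper's picture: $S^4 = E(D)\cup h^2\cup h^4$ with the $2$-handle attached along a meridian of the slice disk, and the $h^2\cup h^4$ piece is the trace turned upside down (the paper only illustrates this diagrammatically in the ribbon case and otherwise treats the lemma as classical). The genuine problem is your converse. The step you flag as "a Schoenflies-type point that is not formal in general" is not actually a gap in the classical argument, and by leaving it open you have not proved the lemma as stated. Your $W$ is exactly $S^4\setminus \inter(h^0)$, the complement of the interior of the $0$-handle of the embedded trace. That $0$-handle is a smoothly embedded closed $4$-ball, and by the uniqueness of smoothly embedded $4$-balls up to ambient isotopy (Palais/Cerf), its complement is diffeomorphic to the standard $B^4$ --- no Schoenflies input is needed, because you already have the ball in hand rather than merely an embedded $S^3$. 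The slice disk is then the core of the \emph{original} $2$-handle (not "the core of the dualized $2$-handle," whose boundary is $\mu_K$, not $K$), which is properly embedded in $W\cong B^4$ with boundary $K\subset\partial h^0=\partial W$. Your van Kampen/Mayer--Vietoris computation that $W$ is a homotopy ball is therefore both insufficient and unnecessary.

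This matters beyond bookkeeping: if the converse only produced sliceness in a homotopy $4$-ball, the trace embedding lemma would lose precisely the strength that the paper (and Piccirillo's Conway knot argument) relies on, namely that a trace embedding in the standard $S^4$ certifies sliceness in the standard $B^4$, so that an invariant like $s$ obstructing sliceness in $B^4$ obstructs the embedding. Replace your final paragraph with the Palais/Cerf identification of $S^4\setminus\inter(h^0)$ with $B^4$ and the proof is complete; no Kirby calculus case-by-case verification is required.
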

While such a diagram gives the same embedding of $-X_0(K)$, we don't clearly see the embedded trace.
It is represented as a $2$-handle attached to a meridian of a dotted circle and a $4$-handle.
We would rather see the trace embedding as a $2$-handle attached to $K$ in our diagram.
To do this, we turn this decomposition upside down as $X_0(K) \cup -E(D)$ and draw the corresponding Kirby diagram.
One could try to use the standard method to turn a Kirby diagram upside down as in Section $5.5$ of \cite{4_Mfld_Kirby}.
The difficulty with that method is that turning $E(D)$ upside down directly can result in a messy Kirby diagram.
The method we will describe will result in simpler diagrams that can be read off directly from a ribbon diagram of $D$.
To do this, we will upgrade $K$ and $D$ to a ribbon link $\mathcal{L} = K \cup L_1 \cup \dots \cup L_{n-1}$ and ribbon disk link $\mathcal{D} = D \cup D_1 \cup \dots \cup D_{n-1}$.
This ribbon disk link will have exterior $E(\mathcal{D})$ consisting of only a $0$-handle and $n$ $1$-handles which can be turned upside down immediately.
We will give pictures of how to do this for $K = 6_1$ and $D$ its standard ribbon disk shown in Figure \ref{fig:Ribbon_diag}.

Draw a ribbon diagram of $D$ and add a small unknot $L_i$ encircling each ribbon band to get a link $\mathcal{L} = K \cup L_1 \cup \dots \cup L_{n-1}$.
Since each $L_i$ bounds a disk $D_i$ in $S^3$ that intersect $K$ in ribbon singularities, there is a ribbon disk link $\mathcal{D} = D \cup D_1 \cup \dots \cup D_{n-1}$ for $\mathcal{L}$ where each $D_i$ has a unique index zero critical point.
To draw a Kirby diagram for $E(\mathcal{D})$, add a red dotted circle to each $L_i$ in the diagram of $E(D)$ as in Figure \ref{fig:disk_link_exterior}.
\begin{figure}
\centering
\subfloat[]{
    \def\svgwidth{2in}
   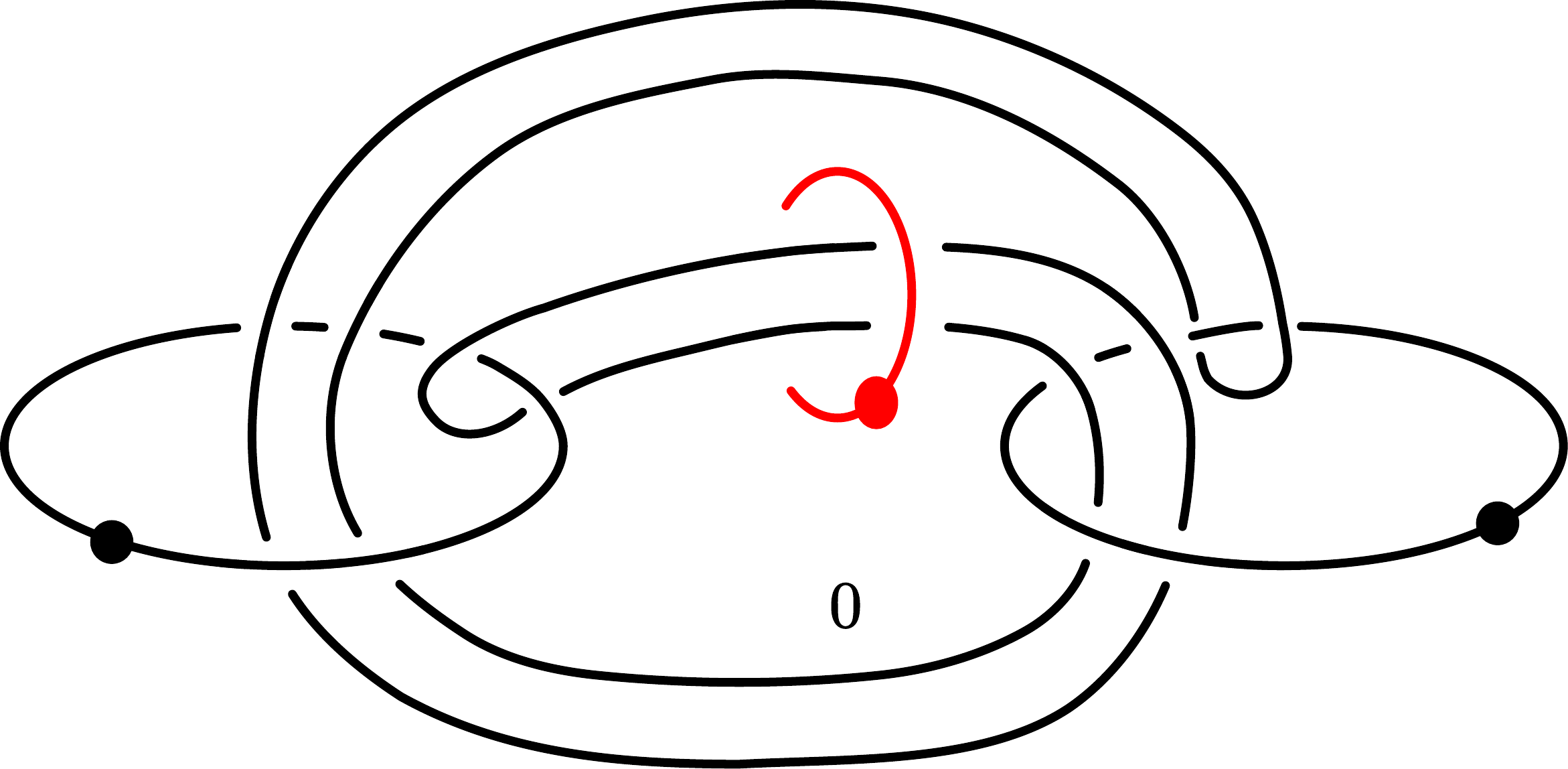
    \label{fig:disk_link_exterior}
} \ \ \ \ \ \ \
\subfloat[]{
    \def\svgwidth{2in}
   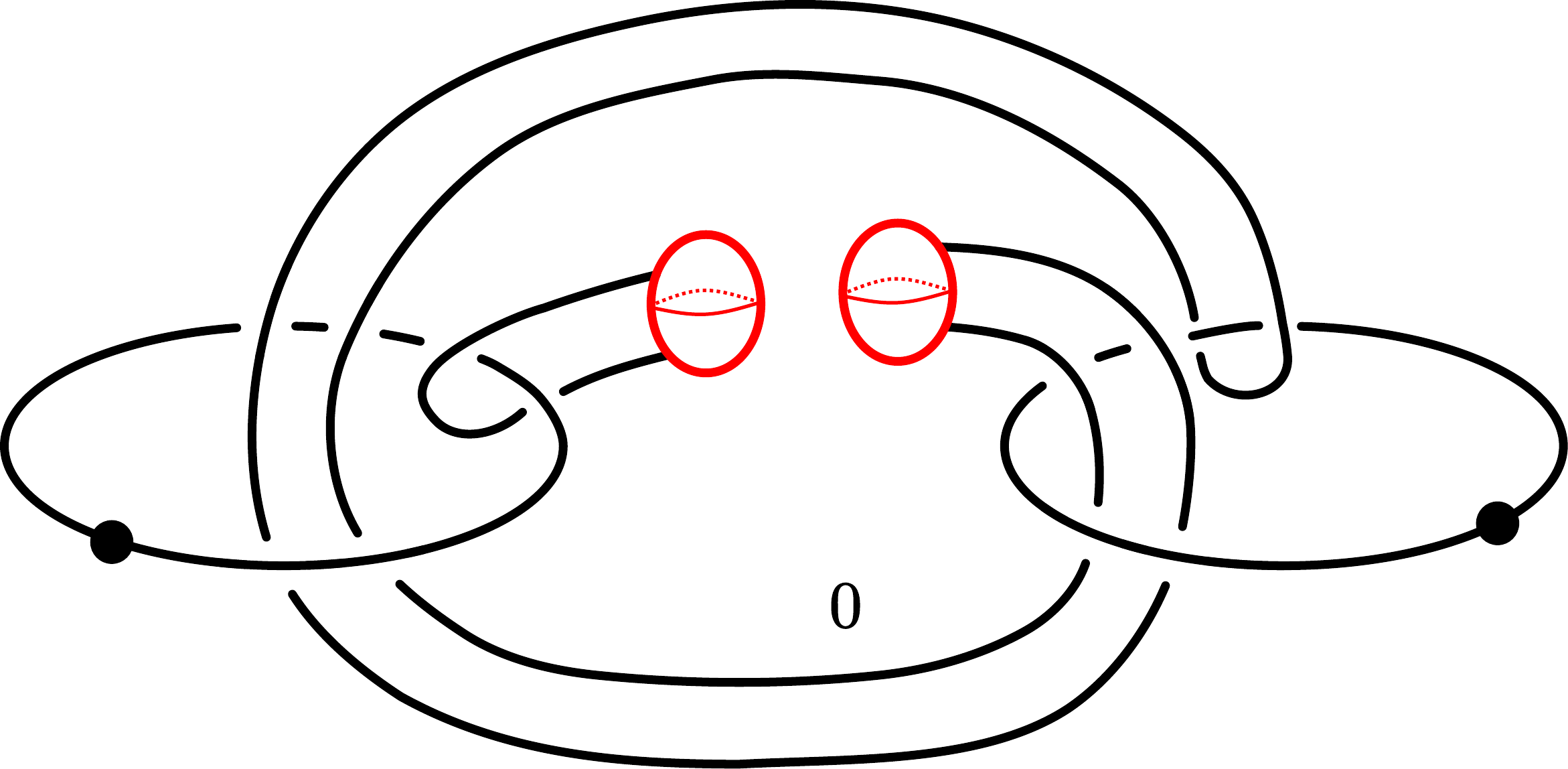
       \label{fig:exterior_balls}
   } \\

\subfloat[]{{
    \fontsize{10pt}{12pt}\selectfont
    \def\svgwidth{2in}
    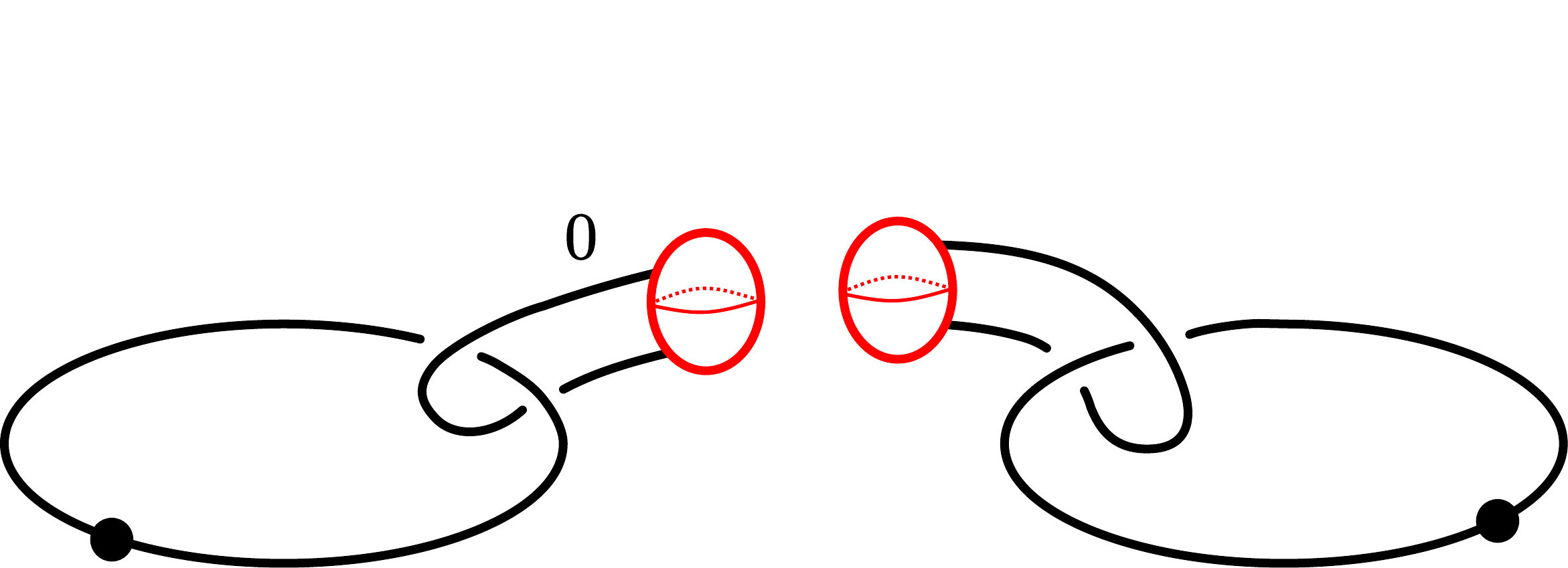
}
\label{fig:simplifying_exterior_a}
} \ \ \ \ \ \ \
\subfloat[]{{
   \fontsize{10pt}{12pt}\selectfont
   \def\svgwidth{2in}
   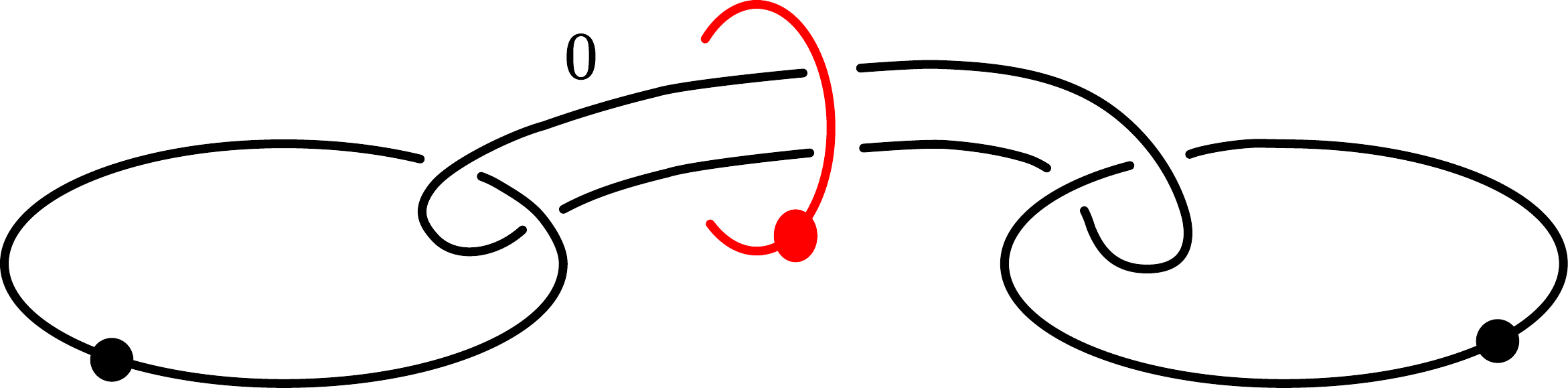
}
\label{fig:simplifying_exterior_b}
}
\caption{Simplifying $E(\mathcal{D})$}

\end{figure}
To simplify, change the red dotted circles into a pair of balls to represent $1$-handles as in Figure \ref{fig:exterior_balls}.
Think of this change in notation as doing ribbon moves on $K$ and each of these balls as a pair of arcs on the banding of $K$.
We can pull each of the red balls along the band to get Figure \ref{fig:simplifying_exterior_a}.
What is left is $n$ black dotted circles with $n-1$ $2$-handles each running through a pair of balls connecting them in consecutive pairs.
Change the balls back to dotted notation as in Figure \ref{fig:simplifying_exterior_b} and then move the red dotted circles off the rest of the diagram.
Cancel the $2$-handles leaving a single black dotted circle and $n-1$ red dotted circles.
We conclude that $E(\mathcal{D})$ admits a handle decomposition with one $0$-handle with $n$ $1$-handles attached.

Let $X_0(\mathcal{L})$ be obtained by attaching zero framed $2$-handles to $B^4$ along each component of $\mathcal{L}$.
We attach $-E(\mathcal{D})$ to $X_0(\mathcal{L})$ to get a Kirby diagram of $S^4$.
The handles of $E(\mathcal{D})$ turn upside down to become $3$ and $4$-handles which attach uniquely.
To summarize, we attach zero framed $2$-handles to a ribbon knot $K$ and unknots encircling the $n-1$ ribbon bands of $D$, then cap off with $n$ $3$-handles and a $4$-handle.
For $K=6_1$, we get the Kirby diagram in Figure \ref{fig:6_1_sphere}.
\begin{remark}
Some of the more adept practitioners of Kirby Calculus may have applied the standard method to turn a Kirby Diagram upside down and got the same diagram as Figure \ref{fig:6_1_sphere}.
However, this is an artifact of the simplicity of $6_1$ and you will not get the same diagram for a more complicated ribbon knot.
One can see what goes wrong if one tries this with the ribbon knot shown in Figure \ref{fig:annulus_ribbon} used in the following subsection.
If one uses that method to draw the homotopy spheres of interest, one gets diagrams that do not seem as amenable to simplification.
\end{remark}
\begin{figure}[!htbp]
\centering
{
    \fontsize{10pt}{12pt}\selectfont
    \def\svgwidth{2.5in}
    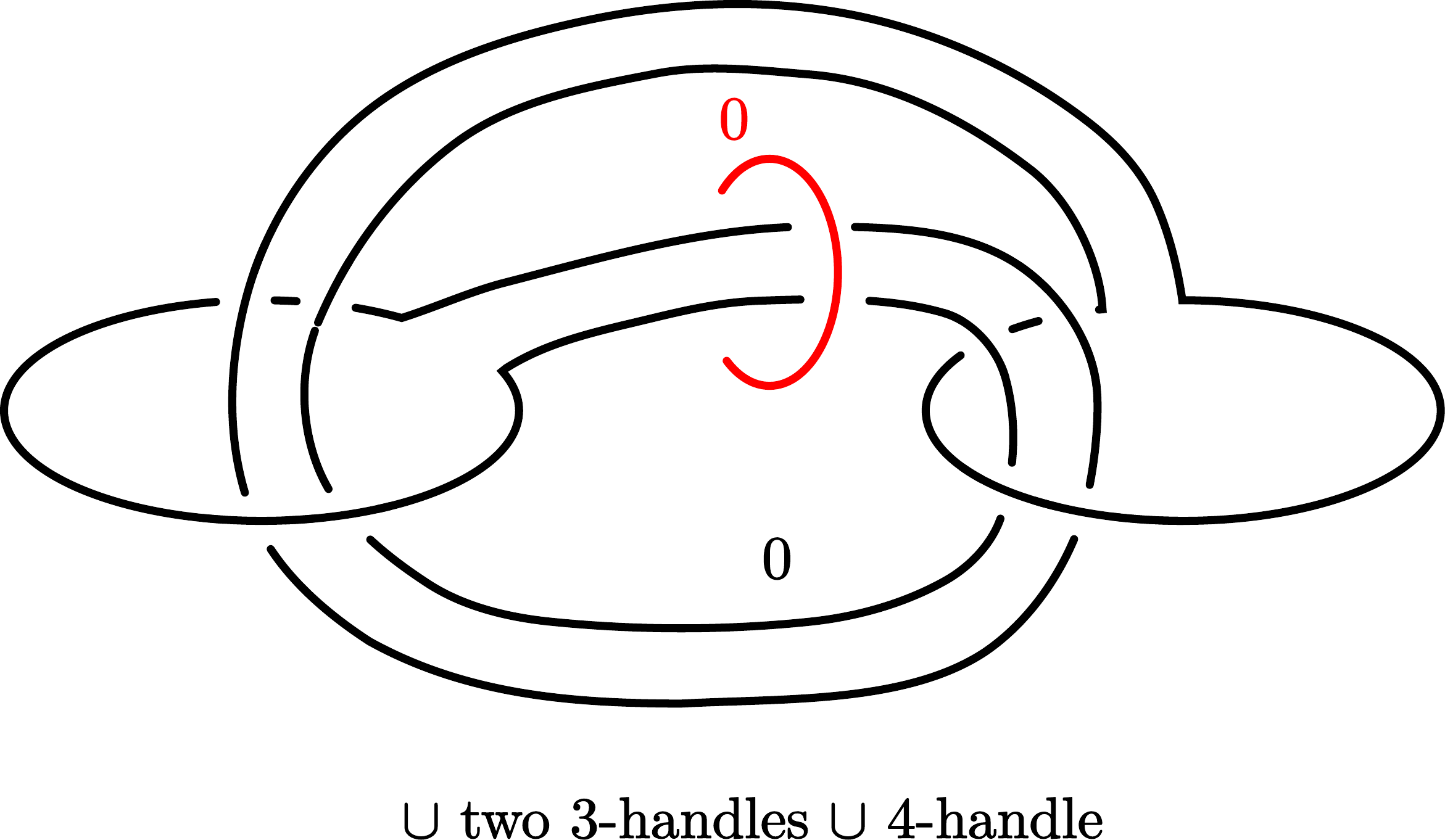
}\caption{Kirby diagram of $S^4$ as $X_0(6_1) \cup -E(D)$}
\label{fig:6_1_sphere}
\end{figure}
%
%
\subsection{Standardzing \texorpdfstring{$Z_n$}{}}
%
%
\begin{figure}
\centering
{
   \fontsize{10pt}{12pt}\selectfont
   \def\svgwidth{2.5in}
   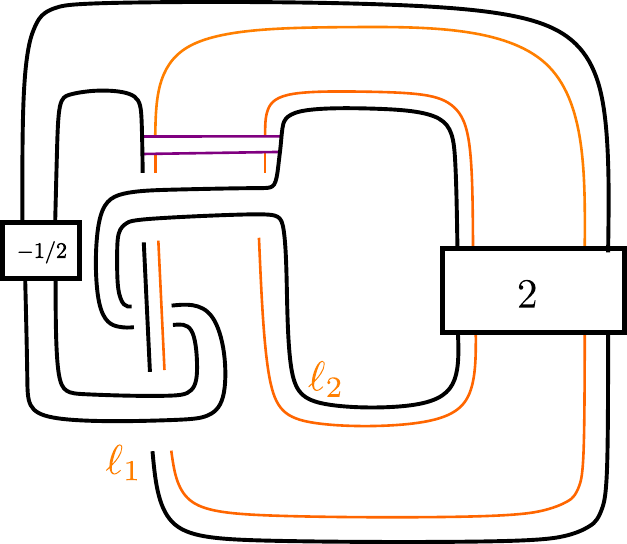
}\caption{Annulus presentation and ribbon diagram of $J_0$}
\label{fig:annulus_ribbon}
\end{figure}
Manolescu and Piccirillo constructed and drew diagrams of a family of homotopy spheres that we will call $Z_n$.
These $Z_n$ arise from a family of knots $\{ J_n \}_{n \in \Z}$ which are annulus twists on the ribbon knot $J_0 = 8_8$.
The annulus presentation and ribbon diagram of $J_0$ are depicted in Figure \ref{fig:annulus_ribbon}.
The half fractional box notation in that figure represents half twists on the two strands passing through that box.

Annulus twisting was introduced by Osoinach in his construction of infinite collections of knots that share a zero surgery \cite{ann_twist}.
The annulus presentation of $J_0$ defines a family of homeomorphisms $\phi_n : S^3_0(J_0) \rightarrow S^3_0(J_n)$.
In Figure \ref{fig:annulus_ribbon}, we see that $\ell_1 \cup \ell_2$ are the boundary of an annulus $A$.
This annulus induces orientations and framings on $\ell_1$ and $\ell_2$.
$J_0$ was obtained by banding $\ell_1 \cup \ell_2$ and so all three cobound a pair of pants.
Let $A' \subset S^3_0(J_0)$ be an annulus formed by the union of the surgery disk and the pair of pants.
Twisting along $\nu(A')$ gives a homeomorphism $S^3_0(J_0) \rightarrow S^3_{0,1/n,-1/n}(J_0,\ell_1,\ell_2)$ (where framings on $\ell_1 \cup \ell_2$ are relative to their annulus framings).
Then a twist on $\nu(A) \subset S^3$ gives a homeomorphism $S^3_{1/n,-1/n}(\ell_1,\ell_2) \rightarrow S^3$ and identifies $S^3_{0,1/n,-1/n}(J_0,\ell_1,\ell_2)$ with zero surgery on some knot $J_n \subset S^3$.
The annulus twist homeomorphisms $\phi_n : S^3_0(J_0) \rightarrow S^3_0(J_n)$ is the composition of these homeomorphisms.

$J_0$ is ribbon and bounds a disk $D$ described by the ribbon move in Figure \ref{fig:annulus_ribbon}.
We can use the annulus twist homeomorphisms to construct the homotopy spheres $Z_n = E(D) \cup_{\phi_n} -X_0(J_n)$.
This decomposition as $E(D) \cup_{\phi_n} -X_0(J_n)$ is the one used to draw the diagrams of $Z_n$ in Figure $20$ of \cite{zero_surg_exotic}.
We will use the technique from the previous subsection to draw diagrams of these $Z_n$ upside down as $X_0(J_n) \cup_{\phi_n} -E(D)$ and then show that each $Z_n$ is standard.
\begin{thm}
\label{thm:Z_n_standard}
All $Z_n$ are diffeomorphic to $S^4$.
\end{thm}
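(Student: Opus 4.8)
The plan is to apply the upside-down picture developed in Subsection \ref{sec:ribbon_kirby} to the ribbon disk $D$ of $J_0$ and to the annulus twist gluing $\phi_n$. Starting from the ribbon diagram of $J_0 = 8_8$ in Figure \ref{fig:annulus_ribbon}, which has a single ribbon band, the disk $D$ has two index-zero critical points and one index-one critical point. Adding a small unknot $L$ encircling the band produces a ribbon link $\mathcal{L} = J_0 \cup L$ with ribbon disk link $\mathcal{D} = D \cup D_1$ whose exterior $E(\mathcal{D})$ has a handle decomposition with one $0$-handle and two $1$-handles, exactly as in the $6_1$ example. Turning this upside down, $S^4 = X_0(\mathcal{L}) \cup -E(\mathcal{D})$ is presented by attaching $0$-framed $2$-handles along $J_0$ and along $L$ and capping off with two $3$-handles and a $4$-handle.

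Next I would incorporate the annulus twist. The annulus $A \subset S^3$, with boundary $\ell_1 \cup \ell_2$, through which $J_0$ is threaded can be isotoped into a standard position in the diagram above; the homeomorphism $\phi_n$ is then realized by a Rolfsen twist that absorbs a $1/n$- and a $-1/n$-framed copy of $\ell_1$ and $\ell_2$. The net effect is to replace $J_0$ — and, if $L$ meets $A$, also $L$ — by the knot $J_n$ obtained by inserting an $n$-twist box in the region occupied by $A$, precisely the modification recorded by the fractional box in Figure \ref{fig:annulus_ribbon}. This yields a Kirby diagram of $-Z_n$ consisting of two $0$-framed $2$-handles, one along $J_n$ and one along the encircling unknot, two $3$-handles, and a $4$-handle, in which the embedded trace $X_0(J_n) \subset -Z_n$ is visibly the $0$-handle together with the $2$-handle along $J_n$.

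Finally I would simplify this diagram by Kirby calculus uniformly in $n$. The $0$-framed $2$-handle along the encircling unknot cancels one of the $3$-handles; geometrically this surgery cuts the ribbon band, and I expect that after this cancellation the band move and the annulus twist unwind together — the annulus $A$ being adapted to the pair-of-pants/band structure of $J_0$ — so that $J_n$ becomes an unknot for every $n$. What remains is the $0$-handle, a $0$-framed $2$-handle along an unknot, the second $3$-handle, and the $4$-handle; canceling the remaining $2$-$3$ pair leaves $B^4$ capped by a $4$-handle, i.e.\ $S^4$, so $-Z_n \cong S^4$, hence $Z_n \cong S^4$ and each $J_n$ is slice. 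The main obstacle is the middle step: faithfully encoding $\phi_n$ in the diagram and, above all, checking that the simplifying sequence of handle slides really works \emph{for all} $n$ at once, i.e.\ that the $n$-twist box is undone by the same moves that undo the ribbon band. This is where the compatibility of the annulus $A$ with the banding that produces $J_0$ must be used carefully, and it is the crux of the argument.
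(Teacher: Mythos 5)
Your setup coincides with the paper's: you draw $-Z_n = X_0(J_n)\cup_{\phi_n} -E(D)$ as two $0$-framed $2$-handles (on $J_n$ and on an unknot $L$ encircling the ribbon band), two $3$-handles and a $4$-handle, exactly as in Figure \ref{fig:diagrams_b}. But the heart of the theorem is missing. Your third step rests on two unjustified moves. First, you cannot ``cancel the $0$-framed $2$-handle along $L$ with a $3$-handle and thereby cut the ribbon band'': a $2$-handle/$3$-handle pair cancels only after the $2$-handle has been made a split $0$-framed unknot by handle slides, and here the band of $J_n$ passes through $L$, so cutting the strands that run through $L$ is not a legal Kirby move; the slides that would split $L$ off are precisely the content you have not supplied. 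Second, your claim that after this the annulus twist and the band ``unwind together so that $J_n$ becomes an unknot for every $n$'' is stated as an expectation, and you yourself flag it as the crux; as written there is no argument, uniform in $n$ or otherwise. So the proposal correctly reduces the problem to a concrete diagrammatic statement but does not prove it.

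It is also worth noting that the paper's endgame is different and lighter than the one you aim for. You try to simplify $-Z_n$ all the way to the standard diagram of $S^4$ (unknotting $J_n$ along the way); the paper never unknots $J_n$. Instead it first observes that $\phi_n$ is supported near the annuli $A$ and $A'$, which are disjoint from $L$, so $L$ is carried to itself and the $-Z_n$ diagram is just the $-Z_0$ diagram with $J_0$ replaced by $J_n$ (a point you leave open with ``if $L$ meets $A$, also $L$''). Then it performs an explicit sequence of moves -- slide the band of $J_n$ over $L$, drag the band under the annulus, and spin it around the annulus to absorb the $n$ twists (Figures \ref{fig:diagrams_c} and \ref{fig:diagrams_d}) -- producing a diagram visibly independent of $n$. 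Since $Z_0 = E(D)\cup -X_0(J_0)$ is $S^4$ by construction, all $Z_n$ are standard. If you want to repair your proposal, either carry out such an explicit $n$-independent sequence of slides, or take the alternative route the paper also records: $J_n\cup L$ surgers to $\#2\, S^1\times S^2$ with $L$ unknotted, so Property $2R$ for the unknot (Lemma \ref{lem:unknot_2R}) provides the required slides abstractly.
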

\begin{proof}
\begin{figure}
\centering
\subfloat[Diagram of $-Z_0$]{{
   \fontsize{10pt}{12pt}\selectfont
   \def\svgwidth{2in}
   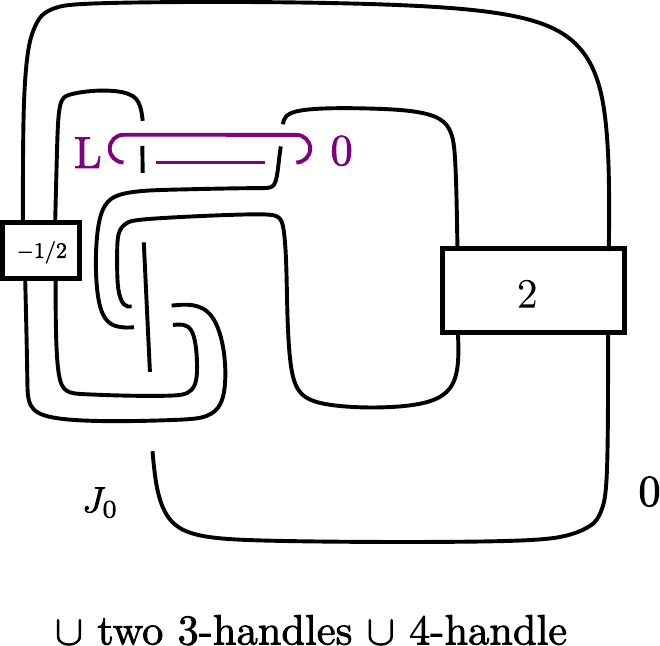
}
\label{fig:diagrams_a}
}\ \ \ \ \ \ \
\subfloat[Diagram of $-Z_n$ with an arrow indicating the upcoming slide]{{
   \fontsize{10pt}{12pt}\selectfont
   \def\svgwidth{2in}
   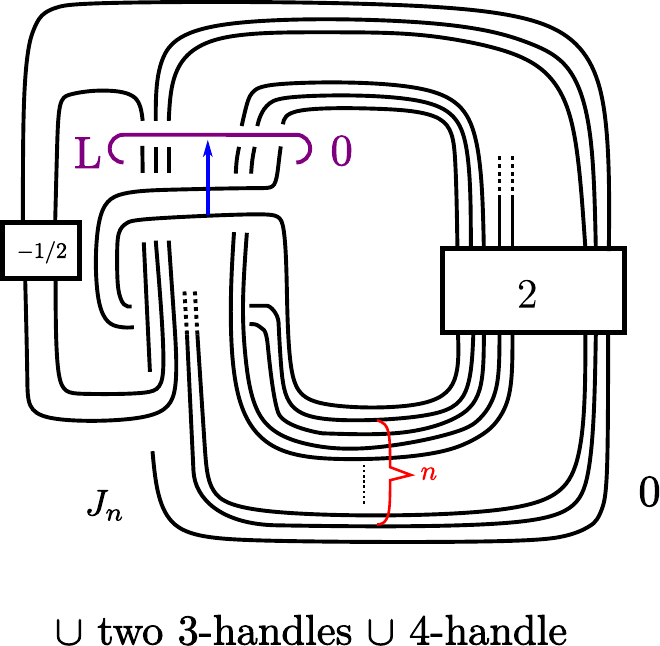
}
\label{fig:diagrams_b}
} \\

\subfloat[Result of slide, then isotope band into blue]{{
   \fontsize{10pt}{12pt}\selectfont
   \def\svgwidth{2in}
   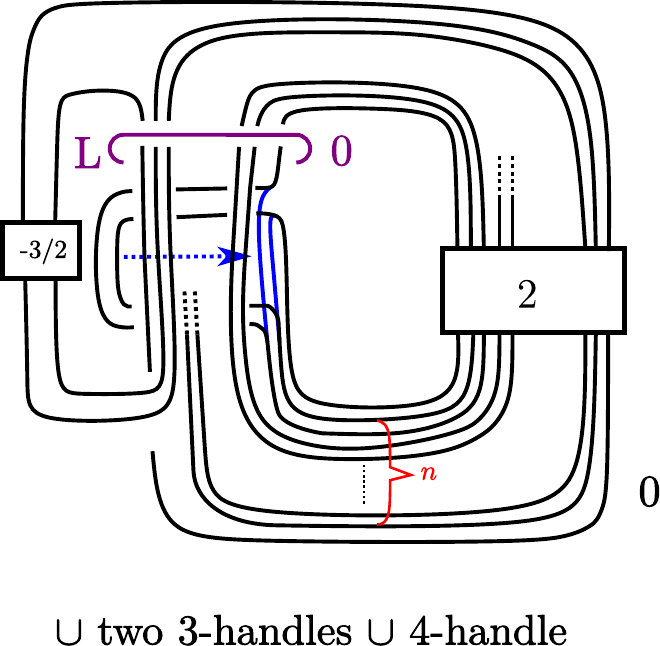
}
\label{fig:diagrams_c}
}\ \ \ \ \ \ \
\subfloat[Result of spinning the band around the annulus]{{
   \fontsize{10pt}{12pt}\selectfont
   \def\svgwidth{2in}
   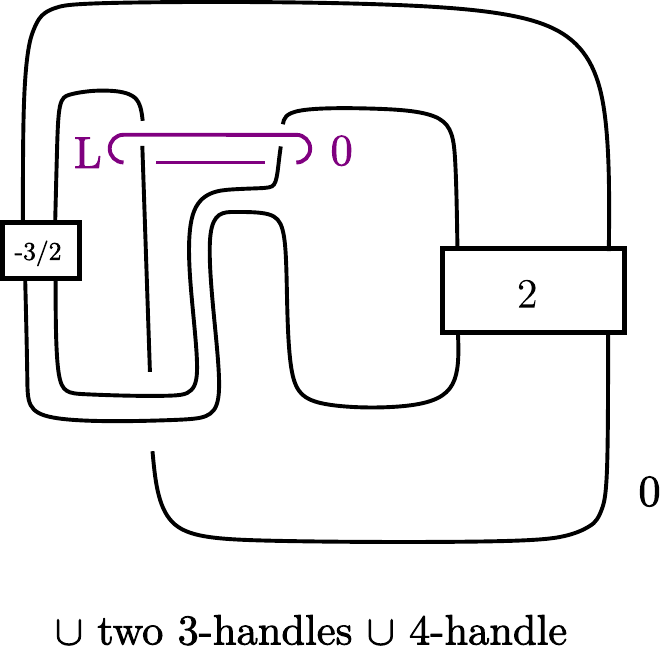
}
\label{fig:diagrams_d}
}

\caption{Standardizing $Z_n$}
\label{fig:diagrams}

\end{figure}
By definition, $Z_0 =E(D) \cup -X_0(J_0)$ is standard and we can draw $-Z_0 = X_0(J_0) \cup -E(D)$ as shown in Figure \ref{fig:diagrams_a}.
This diagram has no $1$-handles and two $2$-handles attached with zero framing to $J_0$ and the knot $L$ that surrounds the ribbon band of $J_0$.
To draw $-Z_n = X_0(J_n) \cup_{\phi_n} -E(D)$, we attach the handles of $-E(D)$ using the map $\phi_n$.
The $3$ and $4$-handles attach uniquely so we only need to keep track of $L$.
The annulus twist homeomorphism is supported in a neighborhood of $A'$ and $A$ disjoint from $L$.
Therefore, $L$ is unaffected by $\phi_n$ and we can draw a diagram of $-Z_n$ as in Figure \ref{fig:diagrams_b}.

With the desired diagrams of $-Z_n$ now in hand, we can now proceed to show that each $Z_n$ is standard.
First slide the band of $J_n$ over $L$ as shown by the blue arrow in Figure \ref{fig:diagrams_b} to get Figure \ref{fig:diagrams_c}.
This brings the band under the annulus with a twist which we absorb into the twist box on the left.
Drag this band under the annulus via the dashed blue arrow to the position shown in solid blue in Figure \ref{fig:diagrams_c}.
Spin the band around the annulus to undo the twists and get Figure \ref{fig:diagrams_d}.
We get the same diagram for all $n$, therefore all $Z_n$ must be diffeomorphic to each other and in particular $Z_0 = S^4$.
\end{proof}
\begin{cor}
$J_n$ is slice for all $n \in \Z$.
\qed
\end{cor}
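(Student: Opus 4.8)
The plan is to obtain this immediately from Theorem~\ref{thm:Z_n_standard} together with the trace embedding lemma. By construction $Z_n = E(D) \cup_{\phi_n} -X_0(J_n)$: the annulus twist homeomorphism $\phi_n : S^3_0(J_0) \to S^3_0(J_n)$ glues the boundary of the ribbon disk exterior $E(D)$ (which is $S^3_0(J_0)$) to the boundary of $-X_0(J_n)$, so $Z_n$ is a closed $4$-manifold in which $-X_0(J_n)$ sits as a codimension-zero submanifold with complement $E(D)$. That is precisely a trace embedding $-X_0(J_n) \hookrightarrow Z_n$.

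Next I would invoke Theorem~\ref{thm:Z_n_standard}, which gives a diffeomorphism $Z_n \cong S^4$; composing yields a smooth embedding $-X_0(J_n) \hookrightarrow S^4$. Since $Z_n$ is a homotopy sphere we have $H_2(Z_n) = 0$, so this embedding is automatically null-homologous and Lemma~\ref{lem:Hslice_TEL} applies with $X = S^4 = \#\,0\,\CP$ and $n = 0$; equivalently one may cite the classical trace embedding lemma, which is insensitive to the orientation of $X_0(J_n)$. Either way one concludes that $J_n$ is slice for every $n \in \Z$.

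There is essentially no obstacle at this point: all of the substantive work has already been carried out in Theorem~\ref{thm:Z_n_standard}, and the only thing requiring care is the orientation bookkeeping — confirming that the gluing in the definition of $Z_n$ genuinely presents $-X_0(J_n)$ (or $X_0(J_n)$, which is equivalent for the purposes of sliceness) as an embedded trace rather than some other surgery-related manifold. One could also run the argument in the reverse direction: a ribbon disk for $J_0$ exhibits $-X_0(J_0) \subset S^4$, the homeomorphism $\phi_n$ replaces this with $-X_0(J_n) \subset Z_n$, and standardness of $Z_n$ then transports this embedding to $-X_0(J_n) \subset S^4$.
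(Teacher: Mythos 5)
Your proposal is correct and is exactly the argument the paper intends (the corollary is left with a \qed precisely because it follows immediately): the decomposition $Z_n = E(D) \cup_{\phi_n} -X_0(J_n)$ exhibits a trace embedding, and Theorem \ref{thm:Z_n_standard} transports it into $S^4$, so the trace embedding lemma gives sliceness of $J_n$. No differences of substance from the paper's reasoning.
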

This proof bears similarities to work of Akbulut and Gompf on the family $\{\Sigma_n\}_{n \in \Z}$ Cappell-Shaneson spheres \cite{CS_standard,More_Cappell}.
The resemblance is most immediate when compared to Akbulut's diagrammatic proof that each $\Sigma_n$ is standard.
Akbulut added a cancelling $2$ and $3$-handle pair to a Kirby diagram of $\Sigma_n$ and identified all $\Sigma_n$ with each other.
In particular, every $\Sigma_n$ is diffeomorphic to $\Sigma_0$ which was known to be standard by earlier work of Gompf \cite{akb_kir_standard}.
There seems to be a more opaque connection to Gompf's work following up on Akbulut.
There Gompf showed that certain torus twists don't affect the Cappell-Shaneson construction to give a mostly Kirby calculus free proof.
It would be interesting if one could think of these annulus twists in a way that recasts this proof in a similar manner.

Meier and Zupan recently gave a new proof that the Cappell-Shaneson spheres $\Sigma_n$ are standard using ideas from the theory of Generalized Property $R$ \cite{meier_zupan}.
We can give another proof that $Z_n$ is standard in a similar manner once we have the diagram shown in Figure \ref{fig:diagrams_b}.
\begin{proof}
The diagram of $-Z_n$ has no $1$-handles and $2$-handles attached to the link $J_n \cup L$ where $L$ is an unknot.
To be able to attach the two $3$-handles and the $4$-handle, zero surgery on $J_n \cup L$ must be $\#2 S^1 \times S^2$.
We can now appeal to Property $2R$ for the unknot.
\begin{lem}[Proposition 3.2 of \cite{Prop2r}]
\label{lem:unknot_2R}
The unknot has Property $2R$.
Namely, if $\mathcal{L}$ is a $2$ component framed link with an unknotted component that surgers to $\#2 S^1 \times S^2$, then there is a sequence of handle slides turning $\mathcal{L}$ into a zero framed $2$ component unlink.
\end{lem}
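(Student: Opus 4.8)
The plan is to derive Property $2R$ for the unknot from Gabai's Property $R$ theorem, after trading the unknotted component for a surgery presentation of $S^1 \times S^2$. Write $\mathcal{L} = K_1 \cup K_2$ with $K_1$ the unknotted component, and let $\Lambda$ be the linking matrix of $\mathcal{L}$ (framings on the diagonal, the linking number off-diagonal). Surgery on $\mathcal{L}$ has first homology $\operatorname{coker}(\Lambda)$, and since this must be $H_1(\#2(S^1 \times S^2)) = \Z^2$, we are forced to have $\Lambda = 0$. Thus both components are $0$-framed and $\operatorname{lk}(K_1, K_2) = 0$, which is the only framing data consistent with the hypothesis.

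First I would surger the $0$-framed unknot $K_1$, converting $S^3$ into $Y = S^1 \times S^2$ and carrying $K_2$ to a knot $\kappa \subset Y$. Because $\operatorname{lk}(K_1, K_2) = 0$, the class $[\kappa]$ is trivial in $H_1(Y) = \Z$, so $\kappa$ is null-homologous and the $0$-framing of $K_2$ descends to the canonical null-homologous framing of $\kappa$. Surgery on $\kappa$ along this slope reproduces the surgery on $\mathcal{L}$, namely the reducible manifold $\#2(S^1 \times S^2)$.

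The crux is then to show that $\kappa$ is trivial, i.e. that it bounds an embedded disk inside a ball of $Y$; this is the step I expect to be the main obstacle, and it is the one that cannot be made elementary. I would isotope a reducing sphere of $\#2(S^1 \times S^2)$ to meet the surgery solid torus of $\kappa$ in meridian disks, and use an innermost-disk argument to isolate the surgery region inside a punctured $S^3$. The reducible-surgery problem then collapses to the assertion that $0$-surgery on a knot in $S^3$ yielding $S^1 \times S^2$ forces that knot to be unknotted, which is precisely Gabai's Property $R$ theorem. The genuinely delicate input is the positioning of the reducing sphere against the surgery torus and the taut-foliation and sutured-manifold machinery underlying Property $R$.

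Finally, granting that $\kappa$ is an unknot contained in a ball disjoint from the surgery region, I would translate this conclusion back into handle slides. Undoing the surgery on $K_1$, an ambient isotopy of $\kappa$ to its trivial position in $Y$ lifts to an isotopy of $K_2$ in the complement of $K_1$ together with a handle slide of $K_2$ over $K_1$ each time the isotopy pushes $\kappa$ across the cocore of the $2$-handle attached along $K_1$. These moves carry $K_2$ to a $0$-framed unknot split from $K_1$, the framing being pinned to $0$ by the homology computation above. Hence $\mathcal{L}$ is handle-slide equivalent to the $0$-framed $2$-component unlink, which is exactly Property $2R$ for the unknot.
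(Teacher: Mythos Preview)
The paper does not prove this lemma; it is quoted verbatim as Proposition~3.2 of \cite{Prop2r} and invoked as a black box, with the surrounding text explicitly noting that it ``relies on deep work of Gabai and Scharlemann \cite{scharl_reduce, Gabai_Prop_R}.'' So there is no in-paper proof to compare against.

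That said, your sketch is essentially the standard route to this result and matches how \cite{Prop2r} argues: trade the $0$-framed unknotted component for an $S^1\times S^2$ summand, recognize the remaining component as a null-homologous knot $\kappa$ whose $0$-surgery adds an $S^1\times S^2$ summand, invoke the Gabai/Scharlemann reducible-surgery theorem to force $\kappa$ into a $3$-ball, and then read isotopies of $\kappa$ across the belt sphere of the first surgery as handle slides of $K_2$ over $K_1$. Your homology computation pinning down the framings and linking number is correct, as is the dictionary between isotopy in $S^1\times S^2$ and slides over $K_1$.

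One caution: the sentence ``isotope a reducing sphere \dots\ to meet the surgery solid torus of $\kappa$ in meridian disks, and use an innermost-disk argument to isolate the surgery region inside a punctured $S^3$'' understates the difficulty. An innermost-disk argument alone does not in general let you push a reducing sphere off a surgery torus or arrange that the knot lies in a ball; this is precisely the content of Scharlemann's combinatorial analysis (or, alternatively, Gabai's sutured-manifold/taut-foliation argument). You do flag this as the delicate step, which is appropriate, but the phrasing suggests a more elementary mechanism than actually exists. In a write-up you should cite the reducible-surgery theorem directly rather than gesture at innermost disks.
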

We can do these handle slides to $-Z_n$ and then cancel the $2$-handles with the $3$-handles to get $S^4$.
\end{proof}
This only gives \textit{existence} of handle slides that standardize $Z_n$.
We prefer the first proof where we directly see how to standardize these diagrams of $-Z_n$.
While using Property $2R$ may seem to be much slicker, it also relies on deep work of Gabai and Scharlemann \cite{scharl_reduce, Gabai_Prop_R}.
The Property $2R$ approach has more and much harder technical prerequisites than the diagrammatic proof.
Despite the moralizing about Property $2R$, it does offer the serious benefit of being easy to implement in practice.
Note that using Property $2R$ in this manner required that $D$ was a ribbon disk with a single index one critical point.
Otherwise the Kirby diagram of $-\Sigma$ would have had more than two $2$-handles.
Fortunately, the Property $2R$ approach can be generalized to any ribbon disk.
According to Theorem $5.1$ of \cite{trisect_PropR}, surgery on $\# (n-1) S^1 \times S^2$ that results in $\# n S^1 \times S^2$ must be on a zero framed unknot\footnote{Note that \cite{trisect_PropR} cite it as a special case in expository work of Gordon \cite{gordon_dehn} who ascribes it to Gabai and Scharlemann \cite{scharl_reduce,Gabai_Prop_R}}.
We rephrase this in terms of Property $nR$ as a direct generalization of Lemma \ref{lem:unknot_2R}.
\begin{lem}[Theorem $5.1$ of \cite{trisect_PropR}]
The $n-1$ component unlink $\mathcal{U}_{n-1}$ has Property $nR$.
In particular, let $\mathcal{L} = \mathcal{U}_{n-1} \cup K$ be an $n$ component framed link that contains an $n-1$ component unlink $\mathcal{U}_{n-1}$.
If $\mathcal{L}$ surgers to $\# n S^1 \times S^2$, then there is a sequence of handle slides turning $\mathcal{L}$ into a zero framed $n$ component unlink.
\end{lem}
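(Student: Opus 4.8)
The plan is to deduce the handle-slide statement from its surgery-theoretic form stated just above --- that $0$--surgery on $\# (n-1) S^1 \times S^2$ yields $\# n S^1 \times S^2$ only along a zero framed unknot. Everything else is a translation of that statement into handle slides.

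First I would normalize $\mathcal{L}$. The first homology of surgery on $\mathcal{L}$ is the cokernel of its linking matrix, and this must be $\Z^n = H_1(\# n S^1 \times S^2)$, which forces the linking matrix to vanish. So every component of $\mathcal{L}$ is zero framed and every pairwise linking number is zero; in particular $\mathcal{U}_{n-1}$ is the zero framed $(n-1)$--component unlink and $K$ is zero framed and null-homologous in the complement of $\mathcal{U}_{n-1}$. Doing the surgery in two stages, let $Y = S^3_{\vec{0}}(\mathcal{U}_{n-1}) \cong \# (n-1) S^1 \times S^2$ and let $\kappa \subset Y$ be the image of $K$; the Seifert framing of $K$ descends to a framing of the null-homologous knot $\kappa$, and with respect to it $Y_0(\kappa) \cong \# n S^1 \times S^2$.

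Now I would apply the cited classification to $\kappa \subset Y$: it is isotopic in $Y$ to a zero framed unknot $\kappa_0$, hence bounds an embedded disk there and so lies in a ball of $Y$. Since $Y$ with the surgery solid tori attached along $\mathcal{U}_{n-1}$ removed is just the connected manifold $S^3 \setminus \nu(\mathcal{U}_{n-1})$, that ball --- hence $\kappa_0$ --- can be isotoped to lie in $S^3 \setminus \nu(\mathcal{U}_{n-1})$ as a tiny unknot split from $\mathcal{U}_{n-1}$. Finally I would translate the composite isotopy of $\kappa$ in $Y$ back to $S^3$ through the standard dictionary: it decomposes into isotopies of $\mathcal{L}$ supported away from the surgery region together with finger moves pushing $\kappa$ once through a surgery solid torus, and each such finger move is a handle slide of $K$ over the corresponding component of $\mathcal{U}_{n-1}$, with all framings staying $0$ by null-homologousness. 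Performing this finite sequence of slides turns $\mathcal{L}$ into $\mathcal{U}_{n-1}$ with a zero framed unknot split off, i.e.\ the zero framed $n$--component unlink.

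The real content is the surgery classification, which repackages the work of Gabai and Scharlemann and which I would use as a black box; a self-contained argument would have to reprove it, and that is the genuine obstacle. Within the derivation the only delicate bookkeeping is to note that $K$ is the one component ever slid, and only over components of $\mathcal{U}_{n-1}$, so the unlink $\mathcal{U}_{n-1}$ and the zero framings survive untouched and the output really is an $n$--component \emph{unlink}. For $n = 2$ this recovers Lemma~\ref{lem:unknot_2R}.
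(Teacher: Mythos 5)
Your argument is correct and takes essentially the same route as the paper: the lemma is obtained by treating Theorem 5.1 of \cite{trisect_PropR} (ultimately Gabai--Scharlemann) as a black box, i.e.\ the statement that a surgery taking $\# (n-1) S^1 \times S^2$ to $\# n S^1 \times S^2$ must be along a zero framed unknot, and then rephrasing it in Property $nR$ terms. The only difference is that you spell out the standard bookkeeping the paper leaves implicit in the word ``rephrase'' --- the vanishing of the linking matrix, and the dictionary converting an isotopy of $\kappa$ across the surgery solid tori into handle slides of $K$ over the components of $\mathcal{U}_{n-1}$ with framings preserved --- which is a correct and routine translation.
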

We can use this to check if the homotopy sphere $\Sigma = E(D) \cup_\phi -X_0(K')$ is standard when $D$ is ribbon.
Draw a Kirby diagram of $S^4 = X_0(K) \cup -E(D)$ with $2$-handle attaching link $\mathcal{L} = K \cup L$ as described in the previous section.
Draw a diagram of $-\Sigma$ as in the proof of Theorem \ref{thm:Z_n_standard}, the attaching link for the $2$-handles in this diagram is $\mathcal{L}' = K' \cup \phi(L)$.
$L$ was originally an unlink and if $\phi(L)$ is still an unlink in this diagram, then $\Sigma$ is standard by Property $nR$ for $\mathcal{U}_{n-1}$.
Somehow this is saying something unsurprising: $L$ represent some aspect of the ribbon structure of $D$ and if $\phi$ leaves it unperturbed, then $\Sigma$ is standard.
\printbibliography

\end{document}